\documentclass[12pt,twoside]{amsart}
\usepackage[latin1]{inputenc}
\usepackage{amsmath, amsthm, amscd, amsfonts, amssymb, graphicx}
\usepackage[bookmarksnumbered, plainpages]{hyperref}

\textwidth 16 cm \textheight 21 cm

\oddsidemargin 1.0cm \evensidemargin 1.0cm

\setcounter{page}{1}


\newtheorem{thm}{Theorem}[section]

\newtheorem{lem}[thm]{Lemma}

\newtheorem{defn}[thm]{Definition}

\numberwithin{equation}{section}

\begin{document}

\title{\bf The Kastler-Kalau-Walze type theorems about $J$-Witten deformation}
\author{Siyao Liu \hskip 0.4 true cm  Yong Wang$^{*}$}

\thanks{{\scriptsize
\hskip -0.4 true cm \textit{2010 Mathematics Subject Classification:}
53C40; 53C42.
\newline \textit{Key words and phrases:} $J$-twist of the de Rham Hodge operator; $J$-Witten deformation; Lichnerowicz type formula; Noncommutative residue; Kastler-Kalau-Walze type theorems.
\newline \textit{$^{*}$Corresponding author}}}

\maketitle

\begin{abstract}
 \indent In this paper, we obtain a Lichnerowicz type formula for $J$-Witten deformation and give the proof of the Kastler-Kalau-Walze type theorems associated with $J$-Witten deformation on four-dimensional and six-dimensional almost product Riemannian spin manifold with (respectively without) boundary.
\end{abstract}

\vskip 0.2 true cm


\pagestyle{myheadings}
\markboth{\rightline {\scriptsize Liu}}
         {\leftline{\scriptsize The Kastler-Kalau-Walze type theorems about $J$-Witten deformation}}

\bigskip
\bigskip


\section{ Introduction}
Based on the noncommutative residue found in \cite{Gu,Wo}, Connes claimed the noncommutative residue of the square of the inverse of the Dirac operator was proportioned to the Einstein-Hilbert action, which is called the Kastler-Kalau-Walze type theorem now.
This theorem was studied extensively by geometers \cite{Co1,Co2,Ka,KW,Ac,U}.
Wang generalized some results to the case of manifolds with boundary in \cite{Wa1, Wa2} and proved the Kastler-Kalau-Walze type theorems for the Dirac operator and the signature operator on lower-dimensional manifolds with boundary.

Most of the operators which have the leading symbol $\sqrt{-1}c(\xi),$ regarding the Kastler-Kalau-Walze theorem, have been studied extensively by, among others, the second author and by previous researchers \cite{Wa3,Wa4,Wa5,WW,WWW,Wa,LW1}.
Kim had given some preliminaries and lemmas about the Dirac operator $D$ and the $J$-twist in \cite{K}.
In \cite{Chen1,Chen2}, the author checked that $D_{J}$ is a formally self-adjoint elliptic operator.
By simple calculations, the leading symbol of the $J$-twist $D_{J}$ of the Dirac operator is not $\sqrt{-1}c(\xi)$.
In \cite{LW2,LW3}, Liu and Wang proved the Kastler-Kalau-Walze type theorems for the $J$-twist $D_{J}$ of the Dirac operator on almost product Riemannian spin manifold with boundary.
Zhang introduced the definition of an elliptic differential operator-Witten deformation in \cite{Z}.
Naturally, we can combine the $J$-twist $D_{J}$ of the Dirac operator and elliptic differential operator-Witten deformation and study the Kastler-Kalau-Walze theorem related to them.

For innovation, we choose the de Rham Hodge operator $\widetilde{D}$ to study in this paper.
The concepts of $J$-twist $\widetilde{D}_{J}$ of the de Rham Hodge operator and $J$-Witten deformation are defined.
The motivation of this paper is to prove the Kastler-Kalau-Walze type theorem associated with $J$-Witten deformation.

This paper is organized as follows.
In Section $2$, we first define the basic notions of $J$-twist $\widetilde{D}_{J}$ of the de Rham Hodge operator and $J$-Witten deformation.
We also give a Lichnerowicz type formula about $J$-Witten deformation and a Kastler-Kalau-Walze type theorem for $J$-Witten deformation on n-dimensional almost product Riemannian spin manifold without boundary in this section.
In the next section, we calculate $\widetilde{{\rm Wres}}[\pi^+\widetilde{D}_{W}^{-1}\circ\pi^+\widetilde{D}_{W}^{-1}]$ on four-dimensional almost product Riemannian spin manifold with boundary.
In Section $4$, we prove the Kastler-Kalau-Walze type theorem about $J$-Witten deformation on six-dimensional almost product Riemannian spin manifold with boundary.


\vskip 1 true cm

\section{ $J$-Witten deformation}

We give some definitions and basic notions that we will use in this paper.

Let $M$ be a $n$-dimensional ($n\geq 3$) oriented compact Riemannian manifold with a Riemannian metric $g^{M}$.
And let $\nabla^L$ be the Levi-Civita connection about $g^M$. In the local coordinates $\{x_i; 1\leq i\leq n\}$ and the fixed orthonormal frame $\{e_1,\cdots,e_n\}$, the connection matrix $(\omega_{s,t})$ is defined by
\begin{equation}
\nabla^L(e_1,\cdots,e_n)= (e_1,\cdots,e_n)(\omega_{s,t}).
\end{equation}
Let $\epsilon (e_j*)$,~$\iota (e_j*)$ be the exterior and interior multiplications respectively, $e_j*$ be the dual base of $e_j$  and $c(e_j)$ be the Clifford action.
Write
\begin{align}
c(e_j)=\epsilon (e_j*)-\iota (e_j*);\ \overline{c}(e_j)=\epsilon (e_j*)+\iota (e_j*),
\end{align}
which satisfies
\begin{align}
&c(e_i)c(e_j)+c(e_j)c(e_i)=-2\delta_i^j;\\
&\overline{c}(e_i)c(e_j)+c(e_j)\overline{c}(e_i)=0;\nonumber\\
&\overline{c}(e_i)\overline{c}(e_j)+\overline{c}(e_j)\overline{c}(e_i)=2\delta_i^j.\nonumber
\end{align}
Furthermore, we suppose that $\partial_{i}$ is a natural local frame on $TM$ and $(g^{ij})_{1\leq i,j\leq n}$ is the inverse matrix associated to the metric matrix  $(g_{ij})_{1\leq i,j\leq n}$ on $M$.
By \cite{WW2}, we have
\begin{align}
\widetilde{D}=\sum^n_{i=1}c(e_i)\bigg[e_i+\frac{1}{4}\sum_{s,t=1}^{n}\omega_{s,t}(e_i)[\overline{c}(e_s)\overline{c}(e_t)-c(e_s)c(e_t)]\bigg].
\end{align}
Let $g^{ij}=g(dx_{i},dx_{j})$, $\xi=\sum_{j}\xi_{j}dx_{j}$ and $\nabla^L_{\partial_{i}}\partial_{j}=\sum_{k}\Gamma_{ij}^{k}\partial_{k}$,  we denote that
\begin{align}
&\sigma_{i}=-\frac{1}{4}\sum_{s,t=1}^{n}\omega_{s,t}(e_i)c(e_s)c(e_t);\ a_{i}=\frac{1}{4}\sum_{s,t=1}^{n}\omega_{s,t}(e_i)\overline{c}(e_s)\overline{c}(e_t);\\
&\xi^{j}=g^{ij}\xi_{i};\ \partial^{j}=g^{ij}\partial_{i};\ \Gamma^{k}=g^{ij}\Gamma_{ij}^{k};\ \sigma^{j}=g^{ij}\sigma_{i};\ a^{j}=g^{ij}a_{i}.\nonumber
\end{align}
Then, the de Rham Hodge operator $\widetilde{D}$ can be written as
\begin{equation}
\widetilde{D}=\sum_{i, j=1}^{n}g^{ij}c(\partial_{i})\nabla_{\partial_{j}}^{\wedge^*T^*M}=\sum_{i=1}^{n}c(e_{i})\nabla_{e_{i}}^{\wedge^*T^*M},
\end{equation}
where
\begin{align}
&\nabla_{e_{i}}^{\wedge^*T^*M}=e_{i}+\sigma_{i}+a_{i}.
\end{align}

Let $J$ be a $(1, 1)$-tensor field on $(M, g^M)$ such that $J^2=\texttt{id},$
\begin{align}
&g^M(J(X), J(Y))=g^M(X, Y),
\end{align}
for all vector fields $X,Y\in \Gamma(TM).$ Here $\texttt{id}$ stands for the identity map. $(M, g^M, J)$ is an almost product Riemannian manifold.
We can define on almost product Riemannian spin manifold the following $J$-twist $\widetilde{D}_{J}$ of the de Rham Hodge operator $\widetilde{D}$ by
\begin{align}
&\widetilde{D}_{J}:=\sum_{i=1}^{n}c(e_{i})\nabla^{\wedge^*T^*M}_{J(e_{i})}=\sum_{i=1}^{n}c[J(e_{i})]\nabla^{\wedge^*T^*M}_{e_{i}}.
\end{align}
It is not difficult to check that $\widetilde{D}_{J}$ is an elliptic operator.

Based on the definition of the Witten deformation, we can define the $J$-Witten deformation as follows:
\begin{align}
&\widetilde{D}_{W}:=\sum_{i=1}^{n}c[J(e_{i})]\bigg[e_i+\frac{1}{4}\sum_{s,t=1}^{n}\omega_{s,t}(e_i)[\overline{c}(e_s)\overline{c}(e_t)-c(e_s)c(e_t)]\bigg]+\overline{c}(V),
\end{align}
where $V$ is a vector field.

By some simple calculations, we get the Lichnerowicz formula.
\begin{thm} The following equation holds:
\begin{align}
{\widetilde{D}_{W}}^{2}
&=-g^{ij}(\nabla_{\partial_{i}}\nabla_{\partial_{j}}-\nabla_{\nabla^{L}_{\partial_{i}}\partial_{j}})-\frac{1}{8}\sum_{i,j,k,l=1}^{n}R(J(e_{i}), J(e_{j}), e_{k}, e_{l})\overline{c}(e_{i})\overline{c}(e_{j})c(e_{k})c(e_{l})+\frac{1}{4}s\\
&-\frac{1}{2}\sum_{\nu,j=1}^{n}c[\nabla_{e_{j}}^{L}(J)e_{\nu}]c[(\nabla^{L}_{e_{\nu}}J)e_{j}]-\frac{1}{2}\sum_{\nu,j=1}^{n}c[J(e_{\nu})]c[(\nabla^{L}_{e_{j}}(\nabla^{L}_{e_{\nu}}(J)))e_{j}-(\nabla^{L}_{\nabla^{L}_{e_{j}}e_{\nu}}(J))e_{j}]\nonumber\\
&+\frac{1}{4}\sum_{\alpha,\nu,j=1}^{n}c[J(e_{\alpha})]c[(\nabla^{L}_{e_{\alpha}}J)e_{j}]c[J(e_{\nu})]c[(\nabla^{L}_{e_{\nu}}J)e_{j}]+\sum_{i=1}^{n}c[J(e_{i})]\overline{c}(\nabla^{L}_{e_{i}}V)+|V|^{2},\nonumber
\end{align}
where $s$ is the scalar curvature.
\end{thm}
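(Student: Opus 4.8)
The plan is to compute $\widetilde{D}_{W}^{2}$ directly from the explicit formula \eqref{eq} for $\widetilde{D}_{W}$, using the generalized Lichnerowicz (Bochner) technique. First I would write $\widetilde{D}_{W}=\widetilde{D}_{J}+\overline{c}(V)$, where $\widetilde{D}_{J}=\sum_{i}c[J(e_{i})]\nabla^{\wedge^{*}T^{*}M}_{e_{i}}$, and expand the square as $\widetilde{D}_{J}^{2}+\widetilde{D}_{J}\circ\overline{c}(V)+\overline{c}(V)\circ\widetilde{D}_{J}+\overline{c}(V)^{2}$. Since $\overline{c}(V)$ anticommutes with every $c(e_{k})$ and $\overline{c}(V)^{2}=|V|^{2}$, the zeroth-order term is immediate, and the cross terms collapse: moving $\overline{c}(V)$ past the Clifford factor in $\widetilde{D}_{J}$ and using that $\nabla^{\wedge^{*}T^{*}M}$ is compatible with $\overline{c}$, the two cross terms combine to $\sum_{i}c[J(e_{i})]\overline{c}(\nabla^{L}_{e_{i}}V)$ (the first-order pieces cancel by anticommutativity). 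This accounts for the last two terms of the stated formula.

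The main work is $\widetilde{D}_{J}^{2}$. I would choose, at a fixed point $x_{0}$, a normal frame with $\nabla^{L}_{e_{i}}e_{j}|_{x_{0}}=0$, so that $\widetilde{D}_{J}=\sum_{i}c[J(e_{i})]\nabla^{\wedge^{*}T^{*}M}_{e_{i}}$ gives
\[
\widetilde{D}_{J}^{2}=\sum_{i,j}c[J(e_{i})]\,\nabla^{\wedge^{*}T^{*}M}_{e_{i}}\!\bigl(c[J(e_{j})]\bigr)\nabla^{\wedge^{*}T^{*}M}_{e_{j}}+\sum_{i,j}c[J(e_{i})]c[J(e_{j})]\,\nabla^{\wedge^{*}T^{*}M}_{e_{i}}\nabla^{\wedge^{*}T^{*}M}_{e_{j}} .
\]
In the second sum I symmetrize and antisymmetrize in $i,j$: the antisymmetric part produces the curvature $R^{\wedge^{*}T^{*}M}$ of the connection on $\wedge^{*}T^{*}M$, which decomposes into the $\overline{c}\,\overline{c}$-part and the $cc$-part; contracting with $c[J(e_{i})]c[J(e_{j})]$ and applying the Clifford/Bianchi bookkeeping exactly as in the classical computation for $\widetilde{D}$ yields the $-\tfrac18\sum R(J(e_{i}),J(e_{j}),e_{k},e_{l})\overline{c}(e_{i})\overline{c}(e_{j})c(e_{k})c(e_{l})$ term together with the $\tfrac14 s$ term (the scalar curvature arises because $J$ is an orthogonal involution, so $\sum_{i}R(J(e_{i}),J(e_{j}),\cdot,\cdot)$ repackages into the Ricci/scalar contraction just as $\sum_{i}R(e_{i},e_{j},\cdot,\cdot)$ does). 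The symmetric part, combined with the first sum after writing $\nabla^{\wedge^{*}T^{*}M}_{e_{i}}(c[J(e_{j})])=c[(\nabla^{L}_{e_{i}}J)e_{j}]$, is reorganized into the Laplace-type operator $-g^{ij}(\nabla_{\partial_{i}}\nabla_{\partial_{j}}-\nabla_{\nabla^{L}_{\partial_{i}}\partial_{j}})$ plus the purely algebraic/first-order $J$-derivative terms.

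Collecting the latter is where the genuinely new (non-classical) terms appear, and this is the step I expect to be the main obstacle: one must carefully track the first-order-in-$\nabla$ contributions $\sum_{i,j}c[J(e_{i})]c[(\nabla^{L}_{e_{i}}J)e_{j}]\nabla_{e_{j}}$, integrate one derivative by parts against the second-order part to absorb it into the connection Laplacian, and separate out the remaining genuinely second-order-in-$J$ pieces $(\nabla^{L}_{e_{j}}(\nabla^{L}_{e_{\nu}}J))e_{j}$ and the quadratic pieces $c[(\nabla^{L}J)e]c[(\nabla^{L}J)e]$. Using $J^{2}=\mathrm{id}$ to differentiate the identity $\nabla^{L}(J^{2})=0$ gives $(\nabla^{L}_{X}J)J+J(\nabla^{L}_{X}J)=0$, which is the key relation for simplifying the cross terms and producing the precise coefficients $-\tfrac12$, $-\tfrac12$, $\tfrac14$ in the stated formula; the orthogonality of $J$ supplies the symmetries needed to make $R(J(e_{i}),J(e_{j}),e_{k},e_{l})$ antisymmetric in $i,j$ so that its contraction with $\overline{c}(e_{i})\overline{c}(e_{j})$ survives. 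Once all terms are matched, the theorem follows. I would double-check the final expression by specializing to $J=\mathrm{id}$ and $V=0$, where it must reduce to the known Lichnerowicz formula for $\widetilde{D}^{2}=\widetilde{D}_{J}^{2}$.
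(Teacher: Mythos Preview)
Your proposal is correct and follows the same overall decomposition $\widetilde{D}_{W}^{2}=\widetilde{D}_{J}^{2}+(\text{cross terms})+|V|^{2}$ as the paper, with identical handling of the $\overline{c}(V)$ pieces. The difference lies in how $\widetilde{D}_{J}^{2}$ is processed. You propose a direct Bochner--type calculation: symmetrize/antisymmetrize $c[J(e_{i})]c[J(e_{j})]\nabla_{e_{i}}\nabla_{e_{j}}$ at a point, then complete the square by hand to absorb the first-order term $\sum c[J(e_{\alpha})]c[(\nabla^{L}_{e_{\alpha}}J)e_{\beta}]\nabla_{e_{\beta}}$ into a modified connection. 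The paper instead invokes the general Laplace-type formalism: it writes $\widetilde{D}_{W}^{2}=-(g^{ij}\partial_{i}\partial_{j}+A^{i}\partial_{i}+B)$ in coordinates, then applies the standard Gilkey formulas $\omega_{i}=\tfrac12 g_{ij}(A^{j}+g^{kl}\Gamma^{j}_{kl})$ and $E=B-g^{ij}(\partial_{i}\omega_{j}+\omega_{i}\omega_{j}-\omega_{k}\Gamma^{k}_{ij})$ to read off the modified connection and the endomorphism $E$ algebraically; evaluating $E$ in normal coordinates then produces the displayed zeroth-order terms. The paper's route is more algorithmic and avoids the ad hoc ``integration by parts'' step (the completion of the square is encoded once and for all in the formula for $\omega_{i}$); your route is more geometric but requires you to discover explicitly that the relevant modified connection has potential $\omega_{i}=\sigma_{i}+a_{i}-\tfrac12\sum g_{ip}c[J(e_{\alpha})]c[(\nabla^{L}_{e_{\alpha}}J)(dx_{p})^{*}]$, which you should state. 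Note also that the identity $(\nabla^{L}_{X}J)J+J(\nabla^{L}_{X}J)=0$ you mention is not actually used in the paper's derivation of the coefficients $-\tfrac12,-\tfrac12,\tfrac14$; those come straight out of expanding $\partial_{i}\omega_{j}+\omega_{i}\omega_{j}$.
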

\begin{proof}
Let $M$ be a smooth compact oriented Riemannian $n$-dimensional manifolds without boundary and $N$ be a vector bundle on $M$.
We say that $P$ is a differential operator of Laplace type, if it has locally the form
\begin{equation}
P=-(g^{ij}\partial_i\partial_j+A^i\partial_i+B),
\end{equation}
where $A^{i}$ and $B$ are smooth sections of $\textrm{End}(N)$ on $M$ (endomorphism).
If $P$ satisfies the form (2.12), then there is a unique connection $\nabla$ on $N$ and a unique endomorphism $E$ such that
\begin{equation}
P=-[g^{ij}(\nabla_{\partial_{i}}\nabla_{\partial_{j}}- \nabla_{\nabla^{L}_{\partial_{i}}\partial_{j}})+E].
\end{equation}
Moreover (with local frames of $T^{*}M$ and $N$), $\nabla_{\partial_{i}}=\partial_{i}+\omega_{i}$ and $E$ are related to $g^{ij}$, $A^{i}$ and $B$ through
\begin{eqnarray}
&&\omega_{i}=\frac{1}{2}g_{ij}\big(A^{i}+g^{kl}\Gamma_{kl}^{j} \texttt{id}\big),\\
&&E=B-g^{ij}\big(\partial_{i}(\omega_{j})+\omega_{i}\omega_{j}-\omega_{k}\Gamma_{ ij}^{k} \big),
\end{eqnarray}
where $\Gamma_{ kl}^{j}$ is the  Christoffel coefficient of $\nabla^{L}$.

We note that
\begin{align}
{\widetilde{D}_{W}}^{2}&={\widetilde{D}_{J}}^{2}+\widetilde{D}_{J}\overline{c}(V)+\overline{c}(V)\widetilde{D}_{J}+(\overline{c}(V))^2.
\end{align}
By (2.4) in \cite{K},
\begin{align}
{\widetilde{D}_{J}}^{2}&=\frac{1}{2}\sum_{i,j=1}^{n}c(e_{i})c(e_{j})R^{\wedge^*T^*M}(J(e_{i}), J(e_{j}))+\triangle+\sum_{\alpha,\beta=1}^{n}c[J(e_{\alpha})]c[(\nabla^{L}_{e_{\alpha}}J)e_{\beta}]\nabla^{\wedge^*T^*M}_{e_{\beta}}.
\end{align}
According to the formulas in \cite{Y}, we can get
\begin{align}
&\frac{1}{2}\sum_{i,j=1}^{n}c(e_{i})c(e_{j})R^{\wedge^*T^*M}(J(e_{i}), J(e_{j}))\\
&=-\frac{1}{8}\sum_{i,j,k,l=1}^{n}R(J(e_{i}), J(e_{j}), e_{k}, e_{l})\overline{c}(e_{i})\overline{c}(e_{j})c(e_{k})c(e_{l})+\frac{1}{4}s\nonumber
\end{align}
and
\begin{align}
\triangle&=-\sum_{i,j=1}^{n}g^{ij}[\partial_{i}\partial_{j}+2\sigma_{i}\partial_{j}+2a_{i}\partial_{j}-\Gamma_{ij}^{k}\partial_{k}
+(\partial_{i}\sigma_{j})+(\partial_{i}a_{j})+\sigma_{i}\sigma_{j}\\
&+\sigma_{i}a_{j}+a_{i}\sigma_{j}+a_{i}a_{j}-\Gamma_{ij}^{k}\sigma_{k}-\Gamma_{ij}^{k}a_{k}].\nonumber
\end{align}
We thus get
\begin{align}
{\widetilde{D}_{J}}^{2}&=-\frac{1}{8}\sum_{i,j,k,l=1}^{n}R(J(e_{i}), J(e_{j}), e_{k}, e_{l})\overline{c}(e_{i})\overline{c}(e_{j})c(e_{k})c(e_{l})+\frac{1}{4}s-\sum_{i,j=1}^{n}g^{ij}[\partial_{i}\partial_{j}\\
&+2\sigma_{i}\partial_{j}+2a_{i}\partial_{j}-\Gamma_{ij}^{k}\partial_{k}+(\partial_{i}\sigma_{j})+(\partial_{i}a_{j})+\sigma_{i}\sigma_{j}+\sigma_{i}a_{j}+a_{i}\sigma_{j}+a_{i}a_{j}\nonumber
\end{align}
\begin{align}
&-\Gamma_{ij}^{k}\sigma_{k}-\Gamma_{ij}^{k}a_{k}]+\sum_{\alpha,\beta=1}^{n}c[J(e_{\alpha})]c[(\nabla^{L}_{e_{\alpha}}J)e_{\beta}]\sum_{\gamma=1}^{n}\langle e_{\beta}, dx_{\gamma}\rangle\nabla^{\wedge^*T^*M}_{\partial_{\gamma}}\nonumber\\
&=-\frac{1}{8}\sum_{i,j,k,l=1}^{n}R(J(e_{i}), J(e_{j}), e_{k}, e_{l})\overline{c}(e_{i})\overline{c}(e_{j})c(e_{k})c(e_{l})+\frac{1}{4}s-\sum_{i,j=1}^{n}g^{ij}[\partial_{i}\partial_{j}\nonumber\\
&+2\sigma_{i}\partial_{j}+2a_{i}\partial_{j}-\Gamma_{ij}^{k}\partial_{k}+(\partial_{i}\sigma_{j})+(\partial_{i}a_{j})+\sigma_{i}\sigma_{j}+\sigma_{i}a_{j}+a_{i}\sigma_{j}+a_{i}a_{j}\nonumber\\
&-\Gamma_{ij}^{k}\sigma_{k}-\Gamma_{ij}^{k}a_{k}]+\sum_{\alpha,\gamma=1}^{n}c[J(e_{\alpha})]c[(\nabla^{L}_{e_{\alpha}}J)(dx_{\gamma})^{*}]\nabla^{\wedge^*T^*M}_{\partial_{\gamma}},\nonumber
\end{align}
where $e_i^*=g^{M}(e_i,\cdot)$ and $\langle X, dx_{\gamma}\rangle=g^{M}(X, (dx_{\gamma})^{*}),$ for a vector field $X.$
Computations show that
\begin{align}
\widetilde{D}_{J}\overline{c}(V)+\overline{c}(V)\widetilde{D}_{J}=\sum_{i=1}^{n}c[J(e_{i})]\overline{c}(\nabla^{L}_{e_{i}}V)
\end{align}
and
\begin{align}
(\overline{c}(V))^2=|V|^{2}.
\end{align}
Summarizing, we have
\begin{align}
{\widetilde{D}_{W}}^{2}
&=-\frac{1}{8}\sum_{i,j,k,l=1}^{n}R(J(e_{i}), J(e_{j}), e_{k}, e_{l})\overline{c}(e_{i})\overline{c}(e_{j})c(e_{k})c(e_{l})+\frac{1}{4}s-\sum_{i,j=1}^{n}g^{ij}[\partial_{i}\partial_{j}+2\sigma_{i}\partial_{j}\\
&+2a_{i}\partial_{j}-\Gamma_{ij}^{k}\partial_{k}+(\partial_{i}\sigma_{j})+(\partial_{i}a_{j})+\sigma_{i}\sigma_{j}+\sigma_{i}a_{j}+a_{i}\sigma_{j}+a_{i}a_{j}-\Gamma_{ij}^{k}\sigma_{k}-\Gamma_{ij}^{k}a_{k}]\nonumber\\
&+\sum_{\alpha,\gamma=1}^{n}c[J(e_{\alpha})]c[(\nabla^{L}_{e_{\alpha}}J)(dx_{\gamma})^{*}]\nabla^{\wedge^*T^*M}_{\partial_{\gamma}}+\sum_{i=1}^{n}c[J(e_{i})]\overline{c}(\nabla^{L}_{e_{i}}V)+|V|^{2}.\nonumber
\end{align}
Hence,
\begin{align}
(\omega_{i})_{{\widetilde{D}_{W}}^{2}}=\sigma_{i}+a_{i}-\frac{1}{2}\sum_{\alpha,p=1}^{n}g_{ip}c[J(e_{\alpha})]c[(\nabla^{L}_{e_{\alpha}}J)(dx_{p})^{*}],
\end{align}
\begin{align}
E_{{\widetilde{D}_{W}}^{2}}&=\frac{1}{8}\sum_{i,j,k,l=1}^{n}R(J(e_{i}), J(e_{j}), e_{k}, e_{l})\overline{c}(e_{i})\overline{c}(e_{j})c(e_{k})c(e_{l})-\frac{1}{4}s-|V|^{2}\\
&-\sum_{i=1}^{n}c[J(e_{i})]\overline{c}(\nabla^{L}_{e_{i}}V)+\sum_{i,j=1}^{n}g^{ij}[(\partial_{i}\sigma_{j})+(\partial_{i}a_{j})+\sigma_{i}\sigma_{j}+\sigma_{i}a_{j}+a_{i}\sigma_{j}\nonumber\\
&+a_{i}a_{j}-\Gamma_{ij}^{k}\sigma_{k}-\Gamma_{ij}^{k}a_{k}]-\sum_{\alpha,j=1}^{n}c[J(e_{\alpha})]c[(\nabla^{L}_{e_{\alpha}}J)(dx_{j})^{*}](\sigma_{j}+a_{j})\nonumber
\end{align}
\begin{align}
&-\sum_{i,j=1}^{n}g^{ij}\big[\partial_{i}(\sigma_{j}+a_{j}-\frac{1}{2}\sum_{\nu,q=1}^{n}g_{jq}c[J(e_{\nu})]c[(\nabla^{L}_{e_{\nu}}J)(dx_{q})^{*}])\nonumber\\
&+(\sigma_{i}+a_{i}-\frac{1}{2}\sum_{\alpha,p=1}^{n}g_{ip}c[J(e_{\alpha})]c[(\nabla^{L}_{e_{\alpha}}J)(dx_{p})^{*}])\nonumber\\
&\times(\sigma_{j}+a_{j}-\frac{1}{2}\sum_{\nu,l=1}^{n}g_{jl}c[J(e_{\nu})]c[(\nabla^{L}_{e_{\nu}}J)(dx_{l})^{*}])\nonumber\\
&-(\sigma_{k}+a_{k}-\frac{1}{2}\sum_{\mu,h=1}^{n}g_{kh}c[J(e_{\mu})]c[(\nabla^{L}_{e_{\mu}}J)(dx_{h})^{*}])\Gamma_{ij}^{k}\big].\nonumber
\end{align}
Since $E$ is globally defined on $M$, taking normal coordinates at $x_0$, we have $\sigma^{i}(x_0)=0,$ $a^{i}(x_0)=0,$ $\Gamma^k(x_0)=0,$ $g^{ij}(x_0)=\delta^j_i,$ $\partial^{j}(x_0)=e_{j},$ $\partial^{j}[c(\partial_{j})](x_0)=0,$ $\nabla^{L}_{e_{j}}e_{k}(x_0)=0$ and $\nabla^{\wedge^*T^*M}_{Y}(c(X))=c(\nabla^{L}_{Y}X),$ for vector fields X and Y, a simple calculation shows that
\begin{align}
E_{{\widetilde{D}_{W}}^{2}}(x_0)&=\frac{1}{8}\sum_{i,j,k,l=1}^{n}R(J(e_{i}), J(e_{j}), e_{k}, e_{l})\overline{c}(e_{i})\overline{c}(e_{j})c(e_{k})c(e_{l})-\frac{1}{4}s\\
&-\sum_{i=1}^{n}c[J(e_{i})]\overline{c}(\nabla^{L}_{e_{i}}V)+\frac{1}{2}\sum_{\nu,j=1}^{n}c[\nabla_{e_{j}}^{L}(J)e_{\nu}]c[(\nabla^{L}_{e_{\nu}}J)e_{j}]\nonumber\\
&+\frac{1}{2}\sum_{\nu,j=1}^{n}c[J(e_{\nu})]c[(\nabla^{L}_{e_{j}}(\nabla^{L}_{e_{\nu}}(J)))e_{j}-(\nabla^{L}_{\nabla^{L}_{e_{j}}e_{\nu}}(J))e_{j}]\nonumber\\
&-\frac{1}{4}\sum_{\alpha,\nu,j=1}^{n}c[J(e_{\alpha})]c[(\nabla^{L}_{e_{\alpha}}J)e_{j}]c[J(e_{\nu})]c[(\nabla^{L}_{e_{\nu}}J)e_{j}]-|V|^{2}.\nonumber
\end{align}
We should use (2.13) here, which completes the proof.
\end{proof}

According to the detailed descriptions in \cite{Ac}, we know that the noncommutative residue of a generalized laplacian $\widetilde{\Delta}$ is expressed as
\begin{equation}
(n-2)\Phi_{2}(\widetilde{\Delta})=(4\pi)^{\frac{n}{2}}\Gamma(\frac{n}{2})\widetilde{res}(\widetilde{\Delta}^{-\frac{n}{2}+1}),
\end{equation}
where $\Phi_{2}(\widetilde{\Delta})$ denotes the integral over the diagonal part of the second coefficient of the heat kernel expansion of $\widetilde{\Delta}$.
Now let $\widetilde{\Delta}={\widetilde{D}_{W}}^{2}$. Since ${\widetilde{D}_{W}}^{2}$ is a generalized laplacian, we can suppose ${\widetilde{D}_{W}}^{2}=\Delta-E$, then, we have
\begin{align}
{\rm Wres}({\widetilde{D}_{W}}^{2})^{-\frac{n-2}{2}}
=\frac{(n-2)\pi^{\frac{n}{2}}}{(\frac{n}{2}-1)!}\int_{M}{\rm tr}(-\frac{1}{6}s+E_{{\widetilde{D}_{W}}^{2}})d{\rm Vol_{M} },
\end{align}
where ${\rm Wres}$ denote the noncommutative residue, ${\rm tr}$ denote ${\rm trace}$.

\begin{thm}\cite{LW2} If $M$ is a $n$-dimensional almost product Riemannian spin manifold without boundary, we have the following:
\begin{align}
{\rm Wres}({\widetilde{D}_{W}}^{2})^{-\frac{n-2}{2}}
=\frac{(n-2)\pi^{\frac{n}{2}}}{(\frac{n}{2}-1)!}\int_{M}2^{n}\Big(&-\frac{5}{12}s-|V|^{2}-\frac{1}{2}\sum_{\nu,j=1}^{n}g^{M}(\nabla_{e_{j}}^{L}(J)e_{\nu}, (\nabla^{L}_{e_{\nu}}J)e_{j})\\
&-\frac{1}{2}\sum_{\nu,j=1}^{n}g^{M}(J(e_{\nu}), (\nabla^{L}_{e_{j}}(\nabla^{L}_{e_{\nu}}(J)))e_{j}-(\nabla^{L}_{\nabla^{L}_{e_{j}}e_{\nu}}(J))e_{j})\nonumber\\
&-\frac{1}{4}\sum_{\alpha,\nu,j=1}^{n}g^{M}(J(e_{\alpha}), (\nabla^{L}_{e_{\nu}}J)e_{j})g^{M}((\nabla^{L}_{e_{\alpha}}J)e_{j}, J(e_{\nu}))\nonumber\\
&-\frac{1}{4}\sum_{\alpha,\nu,j=1}^{n}g^{M}(J(e_{\alpha}), (\nabla^{L}_{e_{\alpha}}J)e_{j})g^{M}(J(e_{\nu}), (\nabla^{L}_{e_{\nu}}J)e_{j})\nonumber\\
&+\frac{1}{4}\sum_{\nu,j=1}^{n}g^{M}((\nabla^{L}_{e_{\nu}}J)e_{j}, (\nabla^{L}_{e_{\nu}}J)e_{j})\Big)d{\rm Vol_{M} },\nonumber
\end{align}
where $s$ is the scalar curvature.
\end{thm}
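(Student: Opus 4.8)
The plan is to derive the stated identity directly from Theorem~2.1 together with the noncommutative residue formula for a power of a generalized Laplacian recalled just above it, so that the only substantive work is a fibrewise trace computation over $\wedge^{*}T^{*}M$. First I would invoke $\mathrm{Wres}({\widetilde{D}_{W}}^{2})^{-\frac{n-2}{2}}=\frac{(n-2)\pi^{\frac{n}{2}}}{(\frac{n}{2}-1)!}\int_{M}\mathrm{tr}\big(-\tfrac{1}{6}s+E_{{\widetilde{D}_{W}}^{2}}\big)\,d\mathrm{Vol}_{M}$, where $E_{{\widetilde{D}_{W}}^{2}}$ is the endomorphism of $\wedge^{*}T^{*}M$ produced in the proof of Theorem~2.1. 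Since $\mathrm{tr}\big(-\tfrac{1}{6}s+E_{{\widetilde{D}_{W}}^{2}}\big)$ is a genuine scalar function, it suffices to evaluate it at an arbitrary point $x_{0}$ in normal coordinates centered there, i.e. to plug in the explicit formula for $E_{{\widetilde{D}_{W}}^{2}}(x_{0})$ obtained at the end of that proof.

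Next I would compute the trace term by term, using the standard identities on $\wedge^{*}T^{*}M$ (of rank $2^{n}$): $\mathrm{tr}(\mathrm{id})=2^{n}$; $\mathrm{tr}\big(c(X)\big)=\mathrm{tr}\big(\overline{c}(X)\big)=0$; $\mathrm{tr}\big(c(X)c(Y)\big)=-2^{n}g^{M}(X,Y)$; $\mathrm{tr}\big(\overline{c}(X)c(Y)\big)=0$; $\mathrm{tr}\big(\overline{c}(X)\overline{c}(Y)c(Z)c(W)\big)=-2^{n}g^{M}(X,Y)g^{M}(Z,W)$; and $\mathrm{tr}\big(c(X_{1})c(X_{2})c(X_{3})c(X_{4})\big)=2^{n}\big(g^{M}(X_{1},X_{2})g^{M}(X_{3},X_{4})-g^{M}(X_{1},X_{3})g^{M}(X_{2},X_{4})+g^{M}(X_{1},X_{4})g^{M}(X_{2},X_{3})\big)$, all of which follow from the anticommutation relations among the $c(e_{i})$ and $\overline{c}(e_{j})$ together with the factorization of traces of $c$--monomials against $\overline{c}$--monomials on $\wedge^{*}T^{*}M$. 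With these in hand: the curvature term $\tfrac{1}{8}\sum R(J(e_{i}),J(e_{j}),e_{k},e_{l})\overline{c}(e_{i})\overline{c}(e_{j})c(e_{k})c(e_{l})$ has zero trace, since its trace forces $i=j$ and $k=l$ while $R$ is antisymmetric in its first two (hence last two) arguments; the term $-\sum_{i}c[J(e_{i})]\overline{c}(\nabla^{L}_{e_{i}}V)$ has zero trace by $\mathrm{tr}\big(\overline{c}(X)c(Y)\big)=0$; the scalar pieces $-\tfrac{1}{6}s$ and $-\tfrac{1}{4}s$ combine to $2^{n}\big(-\tfrac{5}{12}s\big)$; the constant $-|V|^{2}$ gives $-2^{n}|V|^{2}$; and each of the two terms quadratic in $\nabla^{L}J$ picks up a factor $-2^{n}$ upon replacing $\mathrm{tr}\big(c(\cdot)c(\cdot)\big)$ by $-2^{n}g^{M}(\cdot,\cdot)$, reproducing precisely the second and third lines of the asserted formula.

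The step that needs care is the quartic term $-\tfrac{1}{4}\sum_{\alpha,\nu,j}c[J(e_{\alpha})]c[(\nabla^{L}_{e_{\alpha}}J)e_{j}]c[J(e_{\nu})]c[(\nabla^{L}_{e_{\nu}}J)e_{j}]$. Here I would apply the four-factor trace identity with $X_{1}=J(e_{\alpha})$, $X_{2}=(\nabla^{L}_{e_{\alpha}}J)e_{j}$, $X_{3}=J(e_{\nu})$, $X_{4}=(\nabla^{L}_{e_{\nu}}J)e_{j}$, producing three contractions; the middle contraction contains the factor $g^{M}(J(e_{\alpha}),J(e_{\nu}))=g^{M}(e_{\alpha},e_{\nu})=\delta_{\alpha\nu}$ by the orthogonality hypothesis on $J$, which collapses the triple sum to $\sum_{\nu,j}g^{M}((\nabla^{L}_{e_{\nu}}J)e_{j},(\nabla^{L}_{e_{\nu}}J)e_{j})$ and turns the coefficient into $+\tfrac{1}{4}$, while the remaining two contractions reproduce the two $-\tfrac{1}{4}$ terms in the last two lines verbatim. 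Pulling the overall factor $2^{n}$ out of all the surviving contributions then yields exactly the stated identity; alternatively, one may simply cite \cite{LW2}, where this computation is carried out. I expect the sign and index bookkeeping in the quartic term, together with a careful justification of the factorization of the mixed $\overline{c}$-$c$ traces on $\wedge^{*}T^{*}M$, to be the only delicate points.
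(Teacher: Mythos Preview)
Your proposal is correct and follows essentially the same route as the paper: both start from the formula ${\rm Wres}({\widetilde{D}_{W}}^{2})^{-\frac{n-2}{2}}=\frac{(n-2)\pi^{n/2}}{(\frac{n}{2}-1)!}\int_{M}{\rm tr}(-\tfrac{1}{6}s+E_{{\widetilde{D}_{W}}^{2}})\,d{\rm Vol}_{M}$, insert the expression for $E_{{\widetilde{D}_{W}}^{2}}(x_0)$ from Theorem~2.1, and then evaluate the trace term by term using exactly the identities you list (which are the paper's (2.30)--(2.33)). Your handling of the quartic term via $g^{M}(J(e_{\alpha}),J(e_{\nu}))=\delta_{\alpha\nu}$ is precisely the simplification the paper uses in (2.37).
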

\begin{proof}
Set $X, Y, Z, W$ be the vector fields, we have
\begin{align}
{\rm tr}[c(X)\overline{c}(Y)]=0,
\end{align}
\begin{align}
{\rm tr}[c(X)c(Y)]=-g^{M}(X, Y){\rm tr}[\texttt{id}],
\end{align}
\begin{align}
{\rm tr}[\overline{c}(X)\overline{c}(Y)c(Z)c(W)]=-g^{M}(X, Y)g^{M}(Z, W){\rm tr}[\texttt{id}]
\end{align}
and
\begin{align}
{\rm tr}[c(X)c(Y)c(Z)c(W)]&=g^{M}(X, W)g^{M}(Y, Z){\rm tr}[\texttt{id}]-g^{M}(X, Z)g^{M}(Y, W){\rm tr}[\texttt{id}]\\
&+g^{M}(X, Y)g^{M}(Z, W){\rm tr}[\texttt{id}].\nonumber
\end{align}
Thus,
\begin{align}
\sum_{i,j,k,l=1}^{n}{\rm tr}[R(J(e_{i}), J(e_{j}), e_{k}, e_{l})\overline{c}(e_{i})\overline{c}(e_{j})c(e_{k})c(e_{l})]=0,
\end{align}
\begin{align}
&\sum_{\nu,j=1}^{n}{\rm tr}[c[\nabla_{e_{j}}^{L}(J)e_{\nu}]c[(\nabla^{L}_{e_{\nu}}J)e_{j}]]=-\sum_{\nu,j=1}^{n}g^{M}(\nabla_{e_{j}}^{L}(J)e_{\nu}, (\nabla^{L}_{e_{\nu}}J)e_{j}){\rm tr}[\texttt{id}],
\end{align}
\begin{align}
&\sum_{\nu,j=1}^{n}{\rm tr}[c[J(e_{\nu})]c[(\nabla^{L}_{e_{j}}(\nabla^{L}_{e_{\nu}}(J)))e_{j}-(\nabla^{L}_{\nabla^{L}_{e_{j}}e_{\nu}}(J))e_{j}]]
\end{align}
\begin{align}
&=-\sum_{\nu,j=1}^{n}g^{M}(J(e_{\nu}), (\nabla^{L}_{e_{j}}(\nabla^{L}_{e_{\nu}}(J)))e_{j}-(\nabla^{L}_{\nabla^{L}_{e_{j}}e_{\nu}}(J))e_{j}){\rm tr}[\texttt{id}],\nonumber
\end{align}
\begin{align}
&\sum_{\alpha,\nu,j=1}^{n}{\rm tr}[c[J(e_{\alpha})]c[(\nabla^{L}_{e_{\alpha}}J)e_{j}]c[J(e_{\nu})]c[(\nabla^{L}_{e_{\nu}}J)e_{j}]]\\
&=\sum_{\alpha,\nu,j=1}^{n}g^{M}(J(e_{\alpha}), (\nabla^{L}_{e_{\nu}}J)e_{j})g^{M}((\nabla^{L}_{e_{\alpha}}J)e_{j}, J(e_{\nu})){\rm tr}[\texttt{id}]\nonumber\\
&-\sum_{\nu,j=1}^{n}g^{M}((\nabla^{L}_{e_{\nu}}J)e_{j}, (\nabla^{L}_{e_{\nu}}J)e_{j}){\rm tr}[\texttt{id}]\nonumber\\
&+\sum_{\alpha,\nu,j=1}^{n}g^{M}(J(e_{\alpha}), (\nabla^{L}_{e_{\alpha}}J)e_{j})g^{M}(J(e_{\nu}), (\nabla^{L}_{e_{\nu}}J)e_{j}){\rm tr}[\texttt{id}].\nonumber
\end{align}
By applying the formulas shown in (2.26) and (2.28), we obtain Theorem 2.2.
\end{proof}

\section{ The Kastler-Kalau-Walze type theorem for $4$-dimensional manifolds with boundary }

Firstly, we explain the basic notions of Boutet de Monvel's calculus and the definition of the noncommutative residue for manifolds with boundary that will be used throughout the paper.
For the details, see Ref.\cite{Wa3}.

Let $U\subset M$ be a collar neighborhood of $\partial M$ which is diffeomorphic with $\partial M\times [0,1)$.
By the definition of $h(x_n)\in C^{\infty}([0,1))$ and $h(x_n)>0$, there exists $\widehat{h}\in C^{\infty}((-\varepsilon,1))$ such that $\widehat{h}|_{[0,1)}=h$ and $\widehat{h}>0$ for some sufficiently small $\varepsilon>0$.
Then there exists a metric $g'$ on $\widetilde{M}=M\bigcup_{\partial M}\partial M\times(-\varepsilon,0]$ which has the form on $U\bigcup_{\partial M}\partial M\times (-\varepsilon,0 ]$
\begin{equation}
g'=\frac{1}{\widehat{h}(x_{n})}g^{\partial M}+dx _{n}^{2} ,
\end{equation}
such that $g'|_{M}=g$.
We fix a metric $g'$ on the $\widetilde{M}$ such that $g'|_{M}=g$.

We define the Fourier transformation $F'$  by
\begin{equation}
F':L^2({\bf R}_t)\rightarrow L^2({\bf R}_v);~F'(u)(v)=\int e^{-ivt}u(t)dt\\
\end{equation}
and let
\begin{equation}
r^{+}:C^\infty ({\bf R})\rightarrow C^\infty (\widetilde{{\bf R}^+});~ f\rightarrow f|\widetilde{{\bf R}^+};~
\widetilde{{\bf R}^+}=\{x\geq0;x\in {\bf R}\},
\end{equation}
where $\Phi({\bf R})$ denotes the Schwartz space and $\Phi(\widetilde{{\bf R}^+}) =r^+\Phi({\bf R})$, $\Phi(\widetilde{{\bf R}^-}) =r^-\Phi({\bf R})$.

We define $H^+=F'(\Phi(\widetilde{{\bf R}^+}));~ H^-_0=F'(\Phi(\widetilde{{\bf R}^-}))$ which satisfies $H^+\bot H^-_0$.
We have the following property: $h\in H^+~$ (resp. $H^-_0$) if and only if $h\in C^\infty({\bf R})$ which has an analytic extension to the lower (resp. upper) complex half-plane $\{{\rm Im}\xi<0\}$ (resp. $\{{\rm Im}\xi>0\})$ such that for all nonnegative integer $l$,
\begin{equation}
\frac{d^{l}h}{d\xi^l}(\xi)\sim\sum^{\infty}_{k=1}\frac{d^l}{d\xi^l}(\frac{c_k}{\xi^k}),
\end{equation}
as $|\xi|\rightarrow +\infty,$ ${\rm Im}\xi\leq0$ (resp. ${\rm Im}\xi\geq0$).

Let $H'$ be the space of all polynomials and $H^-=H^-_0\bigoplus H';~H=H^+\bigoplus H^-.$
Denote by $\pi^+$ (resp. $\pi^-$) the projection on $H^+$ (resp. $H^-$).
For calculations, we take $H=\widetilde H=\{$rational functions having no poles on the real axis$\}$ ($\tilde{H}$ is a dense set in the topology of $H$).
Then on $\tilde{H}$,
\begin{equation}
\pi^+h(\xi_0)=\frac{1}{2\pi i}\lim_{u\rightarrow 0^{-}}\int_{\Gamma^+}\frac{h(\xi)}{\xi_0+iu-\xi}d\xi,
\end{equation}
where $\Gamma^+$ is a Jordan closed curve included ${\rm Im}(\xi)>0$ surrounding all the singularities of $h$ in the upper half-plane and
$\xi_0\in {\bf R}$.
Similarly, define $\pi'$ on $\tilde{H}$,
\begin{equation}
\pi'h=\frac{1}{2\pi}\int_{\Gamma^+}h(\xi)d\xi.
\end{equation}
So, $\pi'(H^-)=0$.
For $h\in H\bigcap L^1({\bf R})$, $\pi'h=\frac{1}{2\pi}\int_{{\bf R}}h(v)dv$ and for $h\in H^+\bigcap L^1({\bf R})$, $\pi'h=0$.

Let $M$ be a $n$-dimensional compact oriented manifold with boundary $\partial M$.
Denote by $\mathcal{B}$ Boutet de Monvel's algebra, we recall the main theorem in \cite{Wa3,FGLS}.
\begin{thm}
\label{th:32}{\rm\cite{FGLS}}{\bf(Fedosov-Golse-Leichtnam-Schrohe)}
Let $X$ and $\partial X$ be connected, ${\rm dim}X=n\geq3$, $A=\left(
\begin{array}{lcr}
\pi^+P+G &   K \\T &  S
\end{array}\right)$ $\in \mathcal{B},$ and denote by $p$, $b$ and $s$ the local symbols of $P,G$ and $S$ respectively.
Define:
\begin{align}
{\rm{\widetilde{Wres}}}(A)&=\int_X\int_{\bf S}{\rm{tr}}_E\left[p_{-n}(x,\xi)\right]\sigma(\xi)dx \\
&+2\pi\int_ {\partial X}\int_{\bf S'}\left\{{\rm tr}_E\left[({\rm{tr}}b_{-n})(x',\xi')\right]+{\rm{tr}}
_F\left[s_{1-n}(x',\xi')\right]\right\}\sigma(\xi')dx',\nonumber
\end{align}
where ${\rm{\widetilde{Wres}}}$ denotes the noncommutative residue of an operator in the Boutet de Monvel's algebra.\\
Then~~ a) ${\rm \widetilde{Wres}}([A,B])=0 $, for any $A,B\in\mathcal{B}$;~~ b) It is a unique continuous trace on $\mathcal{B}/\mathcal{B}^{-\infty}$.
\end{thm}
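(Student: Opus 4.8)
This is the Fedosov--Golse--Leichtnam--Schrohe theorem, which I would not reprove from scratch; instead I would indicate the structure of the argument and refer to \cite{FGLS} for the technical details. The plan is to first recall that Boutet de Monvel's algebra $\mathcal{B}$ consists of $2\times 2$ block matrices of operators on $C^\infty(X,E)\oplus C^\infty(\partial X,F)$, where the $(1,1)$ entry $\pi^+P+G$ is the sum of a truncated classical pseudodifferential operator and a singular Green operator, $K$ is a potential (Poisson) operator, $T$ is a trace operator, and $S$ is a classical pseudodifferential operator on the boundary. The homogeneous components $p_{-n}$, $b_{-n}$, $s_{1-n}$ appearing in the definition of $\widetilde{\mathrm{Wres}}$ are the pieces of the appropriate (interior and boundary) symbols of the prescribed order, and one checks directly from the transformation laws of these symbols under coordinate changes that the two integrals in the formula are invariantly defined, so $\widetilde{\mathrm{Wres}}$ is a well-defined linear functional on $\mathcal{B}$.

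For part (a), the strategy is the standard one for residue-type traces: it suffices to prove $\widetilde{\mathrm{Wres}}([A,B])=0$ on a set of generators, and the commutator of two elements of $\mathcal{B}$ again lies in $\mathcal{B}$ by the composition rules of the calculus. One computes the interior symbol of $[A,B]$ modulo lower order terms via the Leibniz expansion $\sigma(A\circ B)\sim \sum_\alpha \frac{1}{\alpha!}\partial_\xi^\alpha\sigma(A)\,D_x^\alpha\sigma(B)$; the degree $-n$ part of the interior symbol of a commutator turns out to be a sum of $\xi$- and $x$-derivatives, whose integral over the cosphere bundle $\mathbf{S}$ and over $X$ vanishes by Stokes' theorem. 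The boundary terms require the analogous computation for the symbolic calculus of Green, Poisson and trace operators — here one uses that these symbols take values in operators on $L^2(\mathbf{R}_+)$ and that the relevant trace over that half-line interacts with the boundary symbol composition so that, again, the order $1-n$ part of $\mathrm{tr}\,b$ and of $s$ combine into total derivatives on $T^*\partial X$. Summing the interior and boundary contributions gives $\widetilde{\mathrm{Wres}}([A,B])=0$.

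For part (b), uniqueness, the plan is: given any continuous trace $\tau$ on $\mathcal{B}/\mathcal{B}^{-\infty}$, one shows that $\tau$ is determined by its values on a small model algebra. Because $\tau$ vanishes on commutators and on smoothing operators, and because $\mathcal{B}/\mathcal{B}^{-\infty}$ is generated modulo commutators by a few explicit model operators (localized near a point of the interior, respectively near a boundary point), $\tau$ reduces to a functional on the symbol level; invariance plus the trace property force it to be a constant multiple of the integral of the critical-order symbol, i.e.\ a multiple of $\widetilde{\mathrm{Wres}}$. Continuity is needed to pass from $\widetilde{H}$ (rational symbols) to the full symbol space.

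I expect the main obstacle to be the boundary part of (a): bookkeeping the half-line operator calculus for $G$, $K$, $T$, $S$ and verifying that, after taking the fiber trace over $L^2(\mathbf{R}_+)$, the order $1-n$ symbol of the commutator is exactly a divergence on $T^*\partial X$. This is where the precise normalization constant $2\pi$ in front of the boundary integral is pinned down, and it is the step that genuinely uses the structure of Boutet de Monvel's algebra rather than just formal pseudodifferential manipulations; the interior part, by contrast, is essentially Wodzicki's classical argument. Since this is quoted verbatim as Theorem~\ref{th:32} from \cite{FGLS}, in the paper itself I would simply cite that reference rather than carry out these computations.
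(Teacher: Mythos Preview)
Your proposal is correct and aligns with the paper's treatment: the paper does not prove this theorem at all but simply states it as a quoted result from \cite{FGLS}, exactly as you anticipate in your final sentence. Your sketch of the argument (Wodzicki-type interior vanishing plus the Boutet de Monvel boundary-symbol analysis for part (a), and reduction to model operators for uniqueness in part (b)) is a reasonable outline of the original proof, but none of it appears in the present paper.
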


\begin{defn}{\rm\cite{Wa3}}
Lower dimensional volumes of spin manifolds with boundary are defined by
 \begin{equation}
{\rm Vol}^{(p_1,p_2)}_nM:= \widetilde{{\rm Wres}}[\pi^+D^{-p_1}\circ\pi^+D^{-p_2}].
\end{equation}
\end{defn}

By \cite{Wa3}, we get
\begin{align}
\widetilde{{\rm Wres}}[\pi^+D^{-p_1}\circ\pi^+D^{-p_2}]=\int_M\int_{|\xi|=1}{\rm
tr}_{\wedge^*T^*M}[\sigma_{-n}(D^{-p_1-p_2})]\sigma(\xi)dx+\int_{\partial M}\Phi
\end{align}
and
\begin{align}
\Phi=&\int_{|\xi'|=1}\int^{+\infty}_{-\infty}\sum^{\infty}_{j, k=0}\sum\frac{(-i)^{|\alpha|+j+k+1}}{\alpha!(j+k+1)!}
\times {\rm tr}_{\wedge^*T^*M}[\partial^j_{x_n}\partial^\alpha_{\xi'}\partial^k_{\xi_n}\sigma^+_{r}(D^{-p_1})(x',0,\xi',\xi_n)
\\
&\times\partial^\alpha_{x'}\partial^{j+1}_{\xi_n}\partial^k_{x_n}\sigma_{l}(D^{-p_2})(x',0,\xi',\xi_n)]d\xi_n\sigma(\xi')dx',\nonumber
\end{align}
 where the sum is taken over $r+l-k-|\alpha|-j-1=-n,~~r\leq -p_1,l\leq -p_2$.

Since $[\sigma_{-n}(D^{-p_1-p_2})]|_M$ has the same expression as $\sigma_{-n}(D^{-p_1-p_2})$ in the case of manifolds without
boundary, so locally we can compute the first term by \cite{Ka}, \cite{KW}, \cite{Wa3}, \cite{Po}.

For any fixed point $x_0\in\partial M$, we choose the normal coordinates $U$ of $x_0$ in $\partial M$ (not in $M$) and compute $\Phi(x_0)$ in the coordinates $\widetilde{U}=U\times [0,1)\subset M$ and the metric $\frac{1}{h(x_n)}g^{\partial M}+dx_n^2.$ The dual metric of $g^M$ on $\widetilde{U}$ is ${h(x_n)}g^{\partial M}+dx_n^2.$  Write $g^M_{ij}=g^M(\frac{\partial}{\partial x_i},\frac{\partial}{\partial x_j});~ g_M^{ij}=g^M(dx_i,dx_j)$, then
\begin{equation}
[g^M_{ij}]= \left[\begin{array}{lcr}
  \frac{1}{h(x_n)}[g_{ij}^{\partial M}]  & 0  \\
   0  &  1
\end{array}\right];~~~
[g_M^{ij}]= \left[\begin{array}{lcr}
  h(x_n)[g^{ij}_{\partial M}]  & 0  \\
   0  &  1
\end{array}\right]
\end{equation}
and
\begin{equation}
\partial_{x_s}g_{ij}^{\partial M}(x_0)=0, 1\leq i,j\leq n-1; ~~~g_{ij}^M(x_0)=\delta_{ij}.
\end{equation}

$\{e_1, \cdots, e_n\}$ be an orthonormal frame field in $U$ about $g^{\partial M}$ which is parallel along geodesics and $e_i(x_0)=\frac{\partial}{\partial{x_i}}(x_0).$
We review the following three lemmas.
\begin{lem}{\rm \cite{Wa3}}\label{le:32}
With the metric $g^{M}$ on $M$ near the boundary
\begin{eqnarray}
\partial_{x_j}(|\xi|_{g^M}^2)(x_0)&=&\left\{
       \begin{array}{c}
        0,  ~~~~~~~~~~ ~~~~~~~~~~ ~~~~~~~~~~~~~{\rm if }~j<n; \\[2pt]
       h'(0)|\xi'|^{2}_{g^{\partial M}},~~~~~~~~~~~~~~~~~~~~{\rm if }~j=n,
       \end{array}
    \right. \\
\partial_{x_j}[c(\xi)](x_0)&=&\left\{
       \begin{array}{c}
      0,  ~~~~~~~~~~ ~~~~~~~~~~ ~~~~~~~~~~~~~{\rm if }~j<n;\\[2pt]
\partial x_{n}(c(\xi'))(x_{0}), ~~~~~~~~~~~~~~~~~{\rm if }~j=n,
       \end{array}
    \right.
\end{eqnarray}
where $\xi=\xi'+\xi_{n}dx_{n}$.
\end{lem}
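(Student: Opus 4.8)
The statement to prove is Lemma~\ref{le:32}, which records the first-order behavior at $x_0$ of the two quantities $|\xi|^2_{g^M}$ and $c(\xi)$ along the normal direction and the tangential directions near the boundary.

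\textbf{Plan of proof.} The whole computation is local, carried out in the product coordinates $\widetilde U = U\times[0,1)$ with the metric $\tfrac{1}{h(x_n)}g^{\partial M}+dx_n^2$, whose dual is $g_M = h(x_n)g^{\partial M}+dx_n^2$. First I would write $|\xi|^2_{g^M} = \sum_{i,j} g_M^{ij}\xi_i\xi_j = h(x_n)\,|\xi'|^2_{g^{\partial M}} + \xi_n^2$, where $\xi = \xi'+\xi_n\,dx_n$ and $|\xi'|^2_{g^{\partial M}}=\sum_{i,j<n} g^{ij}_{\partial M}\xi_i\xi_j$. For $j<n$ the derivative $\partial_{x_j}$ hits only the factor $g^{ij}_{\partial M}(x)$ (since $h$ depends on $x_n$ alone and $\xi_n^2$ is constant in $x$), and by (3.13) the tangential derivatives of $g^{\partial M}_{ij}$ vanish at $x_0$ because the chosen coordinates are normal for $g^{\partial M}$ at $x_0$; hence $\partial_{x_j}(|\xi|^2_{g^M})(x_0)=0$. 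For $j=n$, only the factor $h(x_n)$ depends on $x_n$, so $\partial_{x_n}(|\xi|^2_{g^M}) = h'(x_n)|\xi'|^2_{g^{\partial M}}$, and evaluating at $x_0$ (where $x_n=0$) gives $h'(0)|\xi'|^2_{g^{\partial M}}$, which is the claimed formula.

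For the second identity I would expand the Clifford symbol as $c(\xi) = \sum_i \xi_i\, c(dx^i) = c(\xi') + \xi_n\, c(dx^n)$, where $c(\xi') = \sum_{i<n}\xi_i\, c(dx^i)$. Since the frame $\{e_1,\dots,e_n\}$ is chosen orthonormal for $g^{\partial M}$, parallel along geodesics through $x_0$, with $e_i(x_0)=\partial_{x_i}(x_0)$, the Clifford action $c(e_i)$ is covariantly constant along those geodesics, so the tangential derivatives $\partial_{x_j}[c(e_i)](x_0)$ vanish for $j<n$; the same holds for the dual-frame expression $c(dx^i)$ after accounting for $\partial_{x_j}g^M_{kl}(x_0)=0$. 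The normal direction is exceptional: $e_n = \partial_{x_n}$ is globally the unit normal and is parallel in $x_n$, so $\partial_{x_n}[c(dx^n)]=0$, but $\partial_{x_n}[c(dx^i)]$ for $i<n$ need not vanish because the tangential coframe $dx^i$ twists as one moves off the boundary (the metric coefficient $h(x_n)$ varies). Therefore $\partial_{x_n}[c(\xi)](x_0) = \partial_{x_n}[c(\xi')](x_0)$, as stated.

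\textbf{Anticipated main obstacle.} The only genuinely delicate point is keeping straight which pieces of the metric and the frame actually depend on which coordinate, and correctly invoking the two different normalizations in play: the coordinates are $g^{\partial M}$-normal at $x_0$ (giving vanishing \emph{tangential} derivatives of $g^{\partial M}_{ij}$), while the $x_n$-direction carries the nontrivial warping factor $h(x_n)$. Confusing normal coordinates for $g^M$ with these product-type coordinates would give the wrong conclusion in the $j=n$ case, so the careful bookkeeping of the $h(x_n)$-dependence — and the observation that $\xi_n$ and $dx^n$ are genuinely constant in $x$ while $\xi'$-components and $dx^{i}$ ($i<n$) are not — is where the argument must be done with care. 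Once that is set up, both identities follow by a one-line differentiation. This is exactly the computation recorded in \cite{Wa3}, so I would simply cite that reference for the routine verification.
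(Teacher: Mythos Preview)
Your proposal is correct and in fact goes further than the paper itself, which simply states the lemma as a citation from \cite{Wa3} without reproducing any argument. Your sketch---writing $|\xi|^2_{g^M}=h(x_n)|\xi'|^2_{g^{\partial M}}+\xi_n^2$ from the block form of the dual metric, using $\partial_{x_s}g^{\partial M}_{ij}(x_0)=0$ for the tangential derivatives, and splitting $c(\xi)=c(\xi')+\xi_n c(dx^n)$ with $\partial_{x_n}[c(dx^n)]=0$---is exactly the routine verification behind the cited result, so your concluding remark that one may simply cite \cite{Wa3} aligns precisely with what the paper does.
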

\begin{lem}{\rm \cite{Wa3}}\label{le:32}
With the metric $g^{M}$ on $M$ near the boundary
\begin{align}
\omega_{s,t}(e_i)(x_0)&=\left\{
       \begin{array}{c}
        \omega_{n,i}(e_i)(x_0)=\frac{1}{2}h'(0),  ~~~~~~~~~~ ~~~~~~~~~~~{\rm if }~s=n,t=i,i<n; \\[2pt]
       \omega_{i,n}(e_i)(x_0)=-\frac{1}{2}h'(0),~~~~~~~~~~~~~~~~~~~{\rm if }~s=i,t=n,i<n;\\[2pt]
    \omega_{s,t}(e_i)(x_0)=0,~~~~~~~~~~~~~~~~~~~~~~~~~~~other~cases,~~~~~~~~~\\[2pt]
       \end{array}
    \right.
\end{align}
where $(\omega_{s,t})$ denotes the connection matrix of Levi-Civita connection $\nabla^L$.
\end{lem}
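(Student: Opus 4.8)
The assertion is a purely local statement about the Levi-Civita connection $\nabla^L$ of $g^M$ in the collar $U\times[0,1)$, and is exactly the computation carried out in \cite{Wa3}; the plan is to reproduce it by means of Cartan's first structure equations rather than through Christoffel symbols. First I would fix a $g^M$-orthonormal coframe adapted to the warped product $g^M=\frac{1}{h(x_n)}g^{\partial M}+dx_n^2$. If $\widetilde\theta^1,\dots,\widetilde\theta^{n-1}$ denotes the $g^{\partial M}$-orthonormal coframe on $\partial M$ dual to the geodesic frame $e_1,\dots,e_{n-1}$ (so that its connection $1$-forms $\widetilde\omega_{i,j}$ vanish at $x_0$), set
\[
\theta^i=h(x_n)^{-1/2}\,\widetilde\theta^i\quad(i<n),\qquad \theta^n=dx_n,
\]
which is the $g^M$-orthonormal coframe dual to $e_1,\dots,e_{n-1},e_n=\partial_{x_n}$; note that $h(0)=1$, forced by $g^M_{ij}(x_0)=\delta_{ij}$ together with the choice of $g^{\partial M}$-normal coordinates, so that $\theta^i(e_k)(x_0)=\delta_{ik}$ and $e_i(x_0)=\partial_{x_i}(x_0)$.

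Next I would differentiate. Since $h$ depends only on $x_n$ and $d\widetilde\theta^i=-\sum_{j<n}\widetilde\omega_{i,j}\wedge\widetilde\theta^j$ on $\partial M$, a short computation gives
\[
d\theta^n=0,\qquad d\theta^i=-\frac{h'(x_n)}{2h(x_n)}\,\theta^n\wedge\theta^i-\sum_{j<n}\widetilde\omega_{i,j}\wedge\theta^j\quad(i<n).
\]
Comparing with the structure equation $d\theta^i=-\sum_{j}\omega_{i,j}\wedge\theta^j$ and imposing skew-symmetry $\omega_{s,t}=-\omega_{t,s}$ (valid for the orthonormal frame) together with $d\theta^n=-\sum_{j<n}\omega_{n,j}\wedge\theta^j=0$ pins down the connection $1$-forms with no remaining ambiguity:
\[
\omega_{i,j}=\widetilde\omega_{i,j}\ \ (i,j<n),\qquad \omega_{n,i}=-\omega_{i,n}=\frac{h'(x_n)}{2h(x_n)}\,\theta^i\ \ (i<n),\qquad \omega_{n,n}=0.
\]
Evaluating at $x_0$ (where $h(0)=1$, $\widetilde\omega_{i,j}(x_0)=0$, $\theta^i(e_k)(x_0)=\delta_{ik}$ for $i,k<n$ and $\theta^i(e_n)=0$), one reads off $\omega_{i,j}(e_k)(x_0)=0$ for $i,j,k<n$; $\omega_{n,i}(e_k)(x_0)=\frac{1}{2} h'(0)\delta_{ik}$, so $\omega_{n,i}(e_i)(x_0)=\frac{1}{2} h'(0)$ and $\omega_{i,n}(e_i)(x_0)=-\frac{1}{2} h'(0)$ for $i<n$; and all remaining components (involving $s=t=n$, or the argument $e_n$) vanish. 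This is precisely the three-case formula asserted.

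I do not expect a genuine obstacle here; the only points requiring care are the bookkeeping of the warping factor $h(x_n)^{-1/2}$ — its $x_n$-derivative is the source of the $\frac{1}{2} h'(0)$ and must be kept separate from the intrinsic boundary connection $\widetilde\omega_{i,j}$ — and the frame conventions, namely that $e_1,\dots,e_{n-1}$ is $g^{\partial M}$-orthonormal and parallel along the $g^{\partial M}$-geodesics through $x_0$ (so its connection forms vanish at $x_0$ but not in general), while $e_n=\partial_{x_n}$ is a unit geodesic field so that $\omega_{n,n}\equiv0$. The same conclusion follows equally well by computing the Christoffel symbols $\Gamma^k_{ij}$ of the warped metric directly from the Koszul formula and then invoking $\partial_{x_s}g^{\partial M}_{ij}(x_0)=0$; this is the slightly more computational but entirely routine route, which is the one followed in \cite{Wa3}.
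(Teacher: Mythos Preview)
Your argument is correct. The paper itself does not prove this lemma at all; it is simply quoted from \cite{Wa3} as one of three standard facts about the warped collar metric, so there is no ``paper's proof'' to match. Your route via Cartan's first structure equation for the rescaled coframe $\theta^i=h^{-1/2}\widetilde\theta^i$, $\theta^n=dx_n$ is a clean self-contained alternative to the Koszul/Christoffel computation carried out in \cite{Wa3}; it isolates the source of the $\tfrac12 h'(0)$ term transparently (the $x_n$-derivative of $h^{-1/2}$) and makes the vanishing of the tangential components at $x_0$ an immediate consequence of $\widetilde\omega_{i,j}(x_0)=0$. One small wording slip: the coframe $\theta^i$ is dual to the $g^M$-orthonormal frame $h^{1/2}e_i$, not to $e_i$ itself; you implicitly use only the evaluation at $x_0$ where $h(0)=1$, so the conclusion is unaffected, but it would be cleaner to name the $g^M$-orthonormal frame $\widehat e_i=h^{1/2}e_i$ explicitly and observe $\widehat e_i(x_0)=e_i(x_0)$.
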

\begin{lem}{\rm \cite{Wa3}}
When $i<n,$ then
\begin{align}
\label{b13}
\Gamma_{st}^k(x_0)&=\left\{
       \begin{array}{c}
        \Gamma^n_{ii}(x_0)=\frac{1}{2}h'(0),~~~~~~~~~~ ~~~~~~~~~~~{\rm if }~s=t=i,k=n; \\[2pt]
        \Gamma^i_{ni}(x_0)=-\frac{1}{2}h'(0),~~~~~~~~~~~~~~~~~~~{\rm if }~s=n,t=i,k=i;\\[2pt]
        \Gamma^i_{in}(x_0)=-\frac{1}{2}h'(0),~~~~~~~~~~~~~~~~~~~{\rm if }~s=i,t=n,k=i,\\[2pt]
       \end{array}
    \right.
\end{align}
in other cases, $\Gamma_{st}^i(x_0)=0$.
\end{lem}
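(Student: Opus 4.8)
The plan is to read off all three nonzero Christoffel symbols at $x_0$, together with the vanishing of every other one, from a direct application of the standard formula
\[
\Gamma^k_{st}=\tfrac12\,g_M^{kl}\bigl(\partial_{x_s}g^M_{tl}+\partial_{x_t}g^M_{sl}-\partial_{x_l}g^M_{st}\bigr)
\]
to the collar metric $g^M=\tfrac{1}{h(x_n)}g^{\partial M}+dx_n^2$ on $\widetilde U=U\times[0,1)$, evaluated at the base point $x_0\in\partial M$.

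First I would record the metric entries in the coordinates $(x_1,\dots,x_{n-1},x_n)$: for $1\le i,j\le n-1$ one has $g^M_{ij}=h(x_n)^{-1}g^{\partial M}_{ij}(x')$, while $g^M_{in}=0$ for $i<n$ and $g^M_{nn}=1$. Two structural facts then do all the work. Because $g^{\partial M}$ is a metric on the boundary it is independent of $x_n$, so $\partial_{x_n}g^{\partial M}_{ij}\equiv0$; and because $U$ is a $g^{\partial M}$-normal coordinate chart centred at $x_0$ we have $g^{\partial M}_{ij}(x_0)=\delta_{ij}$ and $\partial_{x_k}g^{\partial M}_{ij}(x_0)=0$ for $1\le i,j,k\le n-1$, as already recorded in the excerpt. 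Combined with the normalization $g^M_{ij}(x_0)=\delta_{ij}$, which forces $h(0)=1$, this gives $g_M^{kl}(x_0)=\delta^{kl}$ and shows that the only first derivatives of the metric surviving at $x_0$ are
\[
\partial_{x_n}g^M_{ij}(x_0)=-h'(0)\,\delta_{ij},\qquad 1\le i,j\le n-1 ;
\]
every derivative of an entry having an index equal to $n$, and every derivative taken in a direction $x_k$ with $k<n$, vanishes at $x_0$.

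Substituting into $\Gamma^k_{st}(x_0)=\tfrac12\bigl(\partial_{x_s}g^M_{tk}+\partial_{x_t}g^M_{sk}-\partial_{x_k}g^M_{st}\bigr)(x_0)$ and running through the index cases with $i<n$: for $s=t=i$ and $k=n$ only the term $-\partial_{x_n}g^M_{ii}(x_0)=h'(0)$ survives, giving $\Gamma^n_{ii}(x_0)=\tfrac12 h'(0)$; for $\{s,t\}=\{n,i\}$ and $k=i$ only the term $\partial_{x_n}g^M_{ii}(x_0)=-h'(0)$ survives, giving $\Gamma^i_{ni}(x_0)=\Gamma^i_{in}(x_0)=-\tfrac12 h'(0)$. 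In every remaining case each of the three metric derivatives appearing is of a type killed at $x_0$ by the facts above: in particular $\Gamma^n_{ij}(x_0)=0$ for $i\neq j$ since the unique nonzero derivative $\partial_{x_n}g^M_{ij}(x_0)$ is diagonal in $i,j$, and $\Gamma^k_{nn}(x_0)=0$ since $g^M_{nk}$ and $g^M_{nn}$ are constant. Hence $\Gamma^k_{st}(x_0)=0$ outside the three listed cases, which is the assertion.

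The computation is entirely routine; the only mild obstacle is the bookkeeping needed to confirm that all the off-diagonal and all-tangential index combinations really vanish, and this reduces to the single observation that the one surviving metric derivative $\partial_{x_n}g^M_{ij}(x_0)$ carries exactly one $x_n$ and is diagonal in the remaining two indices. One could in principle instead extract the statement from the preceding lemma on the Levi-Civita connection matrix $(\omega_{s,t})$ in the chosen orthonormal frame, but that requires controlling the change-of-frame matrix and its first derivatives at $x_0$, so the direct Koszul computation above is the cleanest route.
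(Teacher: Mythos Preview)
Your argument is correct. The paper itself does not supply a proof of this lemma; it is simply quoted from \cite{Wa3}, so there is no in-paper argument to compare against. Your direct application of the Koszul formula to the collar metric $g^M=\tfrac{1}{h(x_n)}g^{\partial M}+dx_n^2$, using $h(0)=1$, $g^{\partial M}_{ij}(x_0)=\delta_{ij}$, and $\partial_{x_k}g^{\partial M}_{ij}(x_0)=0$ for $k<n$, is exactly the standard derivation and is complete as written.
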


Similar to (3.9) and (3.10), we firstly compute
\begin{equation}
\widetilde{{\rm Wres}}[\pi^+{\widetilde{D}_{W}}^{-1}\circ\pi^+\widetilde{D}_{W}^{-1}]=\int_M\int_{|\xi|=1}{\rm tr}_{\wedge^*T^*M}[\sigma_{-4}(\widetilde{D}_{W}^{-2})]\sigma(\xi)dx+\int_{\partial M}\Psi,
\end{equation}
where
\begin{align}
\Psi&=\int_{|\xi'|=1}\int^{+\infty}_{-\infty}\sum^{\infty}_{j, k=0}\sum\frac{(-i)^{|\alpha|+j+k+1}}{\alpha!(j+k+1)!}
\times {\rm tr}_{\wedge^*T^*M}[\partial^j_{x_n}\partial^\alpha_{\xi'}\partial^k_{\xi_n}\sigma^+_{r}(\widetilde{D}_{W}^{-1})\\
&(x',0,\xi',\xi_n)\times\partial^\alpha_{x'}\partial^{j+1}_{\xi_n}\partial^k_{x_n}\sigma_{l}(\widetilde{D}_{W}^{-1})(x',0,\xi',\xi_n)]d\xi_n\sigma(\xi')dx',\nonumber
\end{align}
the sum is taken over $r+l-k-j-|\alpha|-1=-4, r\leq -1, l\leq-1$.

Computations show that
\begin{align}
&\int_M\int_{|\xi|=1}{\rm tr}_{\wedge^*T^*M}[\sigma_{-4}({\widetilde{D}_{W}}^{-2})]\sigma(\xi)dx=8\pi^{2}\\
&\int_{M}\Big(-\frac{5}{3}s-4|V|^2
-2\sum_{\nu,j=1}^{4}g^{M}(\nabla_{e_{j}}^{L}(J)e_{\nu}, (\nabla^{L}_{e_{\nu}}J)e_{j})\nonumber\\
&-2\sum_{\nu,j=1}^{4}g^{M}(J(e_{\nu}), (\nabla^{L}_{e_{j}}(\nabla^{L}_{e_{\nu}}(J)))e_{j}-(\nabla^{L}_{\nabla^{L}_{e_{j}}e_{\nu}}(J))e_{j})\nonumber\\
&-\sum_{\alpha,\nu,j=1}^{4}g^{M}(J(e_{\alpha}), (\nabla^{L}_{e_{\nu}}J)e_{j})g^{M}((\nabla^{L}_{e_{\alpha}}J)e_{j}, J(e_{\nu}))\nonumber\\
&-\sum_{\alpha,\nu,j=1}^{4}g^{M}(J(e_{\alpha}), (\nabla^{L}_{e_{\alpha}}J)e_{j})g^{M}(J(e_{\nu}), (\nabla^{L}_{e_{\nu}}J)e_{j})\nonumber\\
&+\sum_{\nu,j=1}^{4}g^{M}((\nabla^{L}_{e_{\nu}}J)e_{j}, (\nabla^{L}_{e_{\nu}}J)e_{j})\Big)d{\rm Vol_{M} }.\nonumber
\end{align}

Now, we compute $\int_{\partial M}\Psi.$
The operator have the following symbols.
\begin{lem} The following identities hold:
\begin{align}
\sigma_1(\widetilde{D}_{W})&=ic[J(\xi)];\\
\sigma_0(\widetilde{D}_{W})&=
\frac{1}{4}\sum_{i,s,t=1}^{n}\omega_{s,t}(e_i)c[J(e_i)]\overline{c}(e_s)\overline{c}(e_t)-\frac{1}{4}\sum_{i,s,t=1}^{n}\omega_{s,t}(e_i)c[J(e_i)]c(e_s)c(e_t)+\overline{c}(V).
\end{align}
\end{lem}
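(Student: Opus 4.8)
\emph{Proof proposal.} My plan is to establish the two identities by a direct computation of the total symbol of the first-order differential operator $\widetilde{D}_{W}$. Recall that if, in local coordinates and with respect to local frames of $\wedge^{*}T^{*}M$, a first-order differential operator is written as $P=\sum_{j=1}^{n}p^{j}(x)\partial_{x_{j}}+q(x)$ with $p^{j},q$ endomorphism-valued, then its total symbol is $\sigma(P)(x,\xi)=i\sum_{j=1}^{n}p^{j}(x)\xi_{j}+q(x)$; hence $\sigma_{1}(P)(x,\xi)=i\sum_{j}p^{j}(x)\xi_{j}$ and $\sigma_{0}(P)(x)=q(x)$. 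In particular, the first-order part of $P$ is homogeneous of degree one in $\xi$ and contributes nothing to $\sigma_{0}$, while the zeroth-order (multiplication) part is precisely $\sigma_{0}$ and is invisible to $\sigma_{1}$.

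The first step is to split $\widetilde{D}_{W}$ into its first- and zeroth-order parts. By the definition of $\widetilde{D}_{W}$,
\[
\widetilde{D}_{W}=\sum_{i=1}^{n}c[J(e_{i})]e_{i}+\frac{1}{4}\sum_{i,s,t=1}^{n}\omega_{s,t}(e_{i})c[J(e_{i})]\big[\overline{c}(e_{s})\overline{c}(e_{t})-c(e_{s})c(e_{t})\big]+\overline{c}(V),
\]
in which only $\sum_{i}c[J(e_{i})]e_{i}$ is of order one. Writing the frame field as $e_{i}=\sum_{j}\langle e_{i},dx_{j}\rangle\partial_{x_{j}}$ and using that the symbol of $\partial_{x_{j}}$ is $i\xi_{j}$, the degree-one part of the total symbol equals $i\sum_{i}\big(\sum_{j}\langle e_{i},dx_{j}\rangle\xi_{j}\big)c[J(e_{i})]=i\sum_{i}\xi(e_{i})\,c[J(e_{i})]$. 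By linearity of $J$ and of the Clifford action this is $i\,c\big[J\big(\sum_{i}\xi(e_{i})e_{i}\big)\big]$, and since $\{e_{1},\dots,e_{n}\}$ is an orthonormal frame one has $\sum_{i}\xi(e_{i})e_{i}=\xi^{\sharp}$, the metric dual of $\xi$. With the customary identification of $\xi$ with $\xi^{\sharp}$ this yields $\sigma_{1}(\widetilde{D}_{W})=ic[J(\xi)]$.

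It then remains only to read off $\sigma_{0}$: by the observation in the first paragraph it coincides with the zeroth-order part of $\widetilde{D}_{W}$ displayed above, that is,
\[
\sigma_{0}(\widetilde{D}_{W})=\frac{1}{4}\sum_{i,s,t=1}^{n}\omega_{s,t}(e_{i})c[J(e_{i})]\overline{c}(e_{s})\overline{c}(e_{t})-\frac{1}{4}\sum_{i,s,t=1}^{n}\omega_{s,t}(e_{i})c[J(e_{i})]c(e_{s})c(e_{t})+\overline{c}(V),
\]
which is the asserted formula. I do not expect a genuine obstacle here beyond careful bookkeeping; the only point deserving attention is the metric identification $\xi\leftrightarrow\xi^{\sharp}$ concealed in the notation $J(\xi)$, which is legitimate precisely because the frame $\{e_{i}\}$ is orthonormal, together with the fact that the $x$-dependence of the coefficients $c[J(e_{i})]$ of the first-order part produces no lower-order correction at the level of $\sigma(\widetilde{D}_{W})$ itself — such cross-terms will only appear later, when $\pi^{+}\widetilde{D}_{W}^{-1}$ is composed with itself.
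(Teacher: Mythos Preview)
Your proposal is correct and is exactly the natural direct computation from the definition of $\widetilde{D}_{W}$; the paper in fact states this lemma without proof, as it follows immediately by reading off the first- and zeroth-order parts of the defining expression for $\widetilde{D}_{W}$. Your remark about the identification $\xi\leftrightarrow\xi^{\sharp}$ via the orthonormal frame is the only subtlety, and you have handled it correctly.
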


Write
\begin{eqnarray}
D_x^{\alpha}&=(-i)^{|\alpha|}\partial_x^{\alpha};
~\sigma(\widetilde{D}_{W})=p_1+p_0;
~\sigma({\widetilde{D}_{W}}^{-1})=\sum^{\infty}_{j=1}q_{-j}.
\end{eqnarray}
By the composition formula of pseudodifferential operators, we have
\begin{align}
1=\sigma(\widetilde{D}_{W}\circ {\widetilde{D}_{W}}^{-1})&=\sum_{\alpha}\frac{1}{\alpha!}\partial^{\alpha}_{\xi}[\sigma(\widetilde{D}_{W})]
{D}_x^{\alpha}[\sigma({\widetilde{D}_{W}}^{-1})]\\
&=(p_1+p_0)(q_{-1}+q_{-2}+q_{-3}+\cdots)\nonumber\\
&+\sum_j(\partial_{\xi_j}p_1+\partial_{\xi_j}p_0)(
D_{x_j}q_{-1}+D_{x_j}q_{-2}+D_{x_j}q_{-3}+\cdots)\nonumber\\
&=p_1q_{-1}+(p_1q_{-2}+p_0q_{-1}+\sum_j\partial_{\xi_j}p_1D_{x_j}q_{-1})+\cdots,\nonumber
\end{align}
so
\begin{equation}
q_{-1}=p_1^{-1};~q_{-2}=-p_1^{-1}[p_0p_1^{-1}+\sum_j\partial_{\xi_j}p_1D_{x_j}(p_1^{-1})].
\end{equation}

\begin{lem} The following identities hold:
\begin{align}
\sigma_{-1}({\widetilde{D}_{W}}^{-1})&=\frac{ic[J(\xi)]}{|\xi|^2};\\
\sigma_{-2}({\widetilde{D}_{W}}^{-1})&=\frac{c[J(\xi)]\sigma_{0}(\widetilde{D}_{W})c[J(\xi)]}{|\xi|^4}+\frac{c[J(\xi)]}{|\xi|^6}\sum_ {j=1}^{n} c[J(dx_j)]
\Big[\partial_{x_j}(c[J(\xi)])|\xi|^2-c[J(\xi)]\partial_{x_j}(|\xi|^2)\Big].
\end{align}
\end{lem}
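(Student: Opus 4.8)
The plan is to derive both symbols directly from the general formulas~(2.45), i.e. $q_{-1}=p_1^{-1}$ and $q_{-2}=-p_1^{-1}\bigl[p_0p_1^{-1}+\sum_j\partial_{\xi_j}p_1\,D_{x_j}(p_1^{-1})\bigr]$, together with the explicit symbols $p_1=\sigma_1(\widetilde{D}_W)=ic[J(\xi)]$ and $p_0=\sigma_0(\widetilde{D}_W)$ recorded in Lemma~2.4. The only genuinely algebraic input needed is the computation of $p_1^{-1}$. Using the Clifford relation $(2.3)$ together with the isometry property $g^M(J(X),J(Y))=g^M(X,Y)$ from $(2.8)$, one gets $c[J(\xi)]^2=-g^M(J(\xi),J(\xi))=-|\xi|^2$, hence $p_1^{-1}=(ic[J(\xi)])^{-1}=\dfrac{ic[J(\xi)]}{|\xi|^2}$, which is exactly the claimed formula for $\sigma_{-1}(\widetilde{D}_W^{-1})$.

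For $\sigma_{-2}$, I would substitute $p_1^{-1}=\dfrac{ic[J(\xi)]}{|\xi|^2}$ into the second formula of $(2.45)$ and simplify term by term. The first piece $-p_1^{-1}p_0p_1^{-1}$ becomes $-\dfrac{ic[J(\xi)]}{|\xi|^2}\,\sigma_0(\widetilde{D}_W)\,\dfrac{ic[J(\xi)]}{|\xi|^2}=\dfrac{c[J(\xi)]\,\sigma_0(\widetilde{D}_W)\,c[J(\xi)]}{|\xi|^4}$, matching the first summand in the statement. For the second piece $-p_1^{-1}\sum_j(\partial_{\xi_j}p_1)D_{x_j}(p_1^{-1})$, note $\partial_{\xi_j}p_1=\partial_{\xi_j}(ic[J(\xi)])=ic[J(dx_j)]$ (since $\xi=\sum_k\xi_k dx_k$ and $J$ is $\xi$-independent), and $D_{x_j}=-i\partial_{x_j}$, so $D_{x_j}(p_1^{-1})=-i\,\partial_{x_j}\!\left(\dfrac{ic[J(\xi)]}{|\xi|^2}\right)=\dfrac{\partial_{x_j}(c[J(\xi)])\,|\xi|^2-c[J(\xi)]\,\partial_{x_j}(|\xi|^2)}{|\xi|^4}$. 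Multiplying the three factors and collecting the $|\xi|$ powers gives $\dfrac{c[J(\xi)]}{|\xi|^6}\sum_{j=1}^n c[J(dx_j)]\bigl[\partial_{x_j}(c[J(\xi)])|\xi|^2-c[J(\xi)]\partial_{x_j}(|\xi|^2)\bigr]$, which is the second summand. Tracking the factors of $i$ carefully ($i\cdot i\cdot i=-i$ in $p_1^{-1}\partial_{\xi_j}p_1$, then another from $D_{x_j}$) and one overall sign flip is the only place an error can creep in.

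The main obstacle is purely bookkeeping: keeping the noncommutativity of Clifford multiplication straight (so that $c[J(\xi)]$, $\sigma_0$, $c[J(dx_j)]$, and $\partial_{x_j}(c[J(\xi)])$ stay in the correct left-to-right order, since none of these commute in general) and getting all the powers of $i$ and the overall signs right. There is no conceptual difficulty; once $c[J(\xi)]^2=-|\xi|^2$ is established, everything else is a direct substitution into $(2.45)$ followed by simplification, and the two displayed formulas fall out.
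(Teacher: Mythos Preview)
Your proposal is correct and follows exactly the approach the paper intends: the paper sets up the recursion $q_{-1}=p_1^{-1}$ and $q_{-2}=-p_1^{-1}\bigl[p_0p_1^{-1}+\sum_j\partial_{\xi_j}p_1\,D_{x_j}(p_1^{-1})\bigr]$ immediately before the lemma (your ``(2.45)'' is the paper's (3.24), and ``Lemma~2.4'' is Lemma~3.6) and then states the result without further comment, so your write-up simply supplies the substitution and the Clifford computation $c[J(\xi)]^2=-|\xi|^2$ that the paper leaves implicit.
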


When $n=4$, then ${\rm tr}_{\wedge^*T^*M}[{\rm \texttt{id}}]={\rm dim}(\wedge^*(\mathbb{R}^4))=16,$ since the sum is taken over $r+l-k-j-|\alpha|-1=-4,~~r\leq -1,l\leq-1,$ then we have the following five cases:\\

{\bf case (a)~(I)}~$r=-1,~l=-1,~k=j=0,~|\alpha|=1$\\

By applying the formula shown in (3.18), we can calculate
\begin{align}
\Psi_1=-\int_{|\xi'|=1}\int^{+\infty}_{-\infty}\sum_{|\alpha|=1}
{\rm tr}_{\wedge^*T^*M}[\partial^\alpha_{\xi'}\pi^+_{\xi_n}\sigma_{-1}({\widetilde{D}_{W}}^{-1})\times \partial^\alpha_{x'}\partial_{\xi_n}\sigma_{-1}({\widetilde{D}_{W}}^{-1})](x_0)d\xi_n\sigma(\xi')dx'.
\end{align}

{\bf case (a)~(II)}~$r=-1,~l=-1,~k=|\alpha|=0,~j=1$\\

It is easy to check that
\begin{align}
\Psi_2=-\frac{1}{2}\int_{|\xi'|=1}\int^{+\infty}_{-\infty} {\rm tr}_{\wedge^*T^*M} [\partial_{x_n}\pi^+_{\xi_n}\sigma_{-1}({\widetilde{D}_{W}}^{-1})\times
\partial_{\xi_n}^2\sigma_{-1}({\widetilde{D}_{W}}^{-1})](x_0)d\xi_n\sigma(\xi')dx'.
\end{align}

{\bf case (a)~(III)}~$r=-1,~l=-1,~j=|\alpha|=0,~k=1$\\

By (3.18), we calculate that
\begin{align}
\Psi_3=-\frac{1}{2}\int_{|\xi'|=1}\int^{+\infty}_{-\infty}
{\rm tr}_{\wedge^*T^*M} [\partial_{\xi_n}\pi^+_{\xi_n}\sigma_{-1}({\widetilde{D}_{W}}^{-1})\times
\partial_{\xi_n}\partial_{x_n}\sigma_{-1}({\widetilde{D}_{W}}^{-1})](x_0)d\xi_n\sigma(\xi')dx'.
\end{align}

Similar to the formulas (3.28)-(3.36) in \cite{LW2}, we have
\begin{align}
\Psi_1+\Psi_2+\Psi_3=\sum_{\beta=1}^{n}\sum_{i=1}^{n-1}a_{\beta}^{i}\partial_{x_i}(a_{\beta}^{n}){\rm tr}[\texttt{id}]\Omega_3(-\frac{\pi}{8}+\frac{\pi^2}{3})dx'+\sum_{\beta=1}^{n}\sum_{i=1}^{n-1}a_{\beta}^{n}\partial_{x_i}(a_{\beta}^{i}){\rm tr}[\texttt{id}]\Omega_3(-\frac{\pi^2}{6})dx',
\end{align}
where $\Omega_{3}=\frac{2\pi^\frac{3}{2}}{\Gamma(\frac{3}{2})}.$\\

{\bf case (b)}~$r=-2,~l=-1,~k=j=|\alpha|=0$\\

Similarly, we get
\begin{align}
\Psi_4&=-i\int_{|\xi'|=1}\int^{+\infty}_{-\infty}{\rm tr}_{\wedge^*T^*M}[\pi^+_{\xi_n}\sigma_{-2}({\widetilde{D}_{W}}^{-1})\times
\partial_{\xi_n}\sigma_{-1}({\widetilde{D}_{W}}^{-1})](x_0)d\xi_n\sigma(\xi')dx'.
\end{align}

Let us first compute $\partial_{\xi_n}\sigma_{-1}({\widetilde{D}_{W}}^{-1})(x_0).$
\begin{align}
\partial_{\xi_n}\left(\frac{ic[J(\xi)]}{|\xi|^2}\right)(x_0)|_{|\xi'|=1}
&=i\sum^{n}_{\beta=1}\sum^{n-1}_{i=1}\xi_{i}a_{\beta}^ic(dx_{\beta})\partial_{\xi_n}\left(\frac{1}{1+\xi_{n}^2}\right)+i\sum^{n}_{\beta=1}a_{\beta}^nc(dx_{\beta})\partial_{\xi_n}\left(\frac{\xi_{n}}{1+\xi_{n}^2}\right)\\
&=-\frac{2i\xi_n}{(1+\xi_{n}^2)^2}\sum^{n}_{\beta=1}\sum^{n-1}_{i=1}\xi_{i}a_{\beta}^ic(dx_{\beta})+\frac{i(1-\xi_n^2)}{(1+\xi_{n}^2)^2}\sum^{n}_{\beta=1}a_{\beta}^nc(dx_{\beta}),\nonumber
\end{align}
where $J(dx_{p})=\sum^{n}_{h=1}a^{p}_{h}dx_{h}.$

We next calculate that
\begin{align}
\sigma_{-2}({\widetilde{D}_{W}}^{-1})(x_0)&=\frac{c[J(\xi)]\sigma_{0}(\widetilde{D}_{W})(x_0)c[J(\xi)]}{|\xi|^4}+\frac{c[J(\xi)]}{|\xi|^6}\sum_{j=1}^{n} c[J(dx_j)]\Big[\sum_{p,h=1}^{n}\xi_p\partial_{x_j}(a_{h}^{p})c(dx_h)|\xi|^2\\
&+\sum_{p,h=1}^{n}\xi_pa_{h}^{p}\partial_{x_j}(c(dx_h))|\xi|^2-c[J(\xi)]\partial_{x_j}(|\xi|^2)\Big](x_0)\nonumber\\
&=\frac{c[J(\xi)]\sigma_{0}(\widetilde{D}_{W})(x_0)c[J(\xi)]}{|\xi|^4}-\frac{c[J(\xi)]}{|\xi|^6}h'(0)|\xi'|^2c[J(dx_n)]c[J(\xi)]\nonumber\\
&+\frac{c[J(\xi)]}{|\xi|^4}\Big[\sum_{j,p,h=1}^{n}\xi_p\partial_{x_j}(a_{h}^{p})c[J(dx_j)]c(dx_h)+\sum_{p=1}^{n}\sum_{h=1}^{n-1}\xi_pa_{h}^{p}c[J(dx_n)]\partial_{x_n}(c(dx_h))\Big](x_0),\nonumber
\end{align}
where
\begin{align}
\sigma_{0}(\widetilde{D}_{W})(x_0)
&=\frac{1}{4}\sum_{i,s,t=1}^{n}\omega_{s,t}(e_i)(x_{0})c[J(e_i)]\overline{c}(e_s)\overline{c}(e_t)\\
&-\frac{1}{4}\sum_{i,s,t=1}^{n}\omega_{s,t}(e_i)(x_{0})c[J(e_i)]c(e_s)c(e_t)+\overline{c}(V)\nonumber\\
&=\frac{1}{4}h'(0)\sum_{\eta=1}^{n}\sum_{\nu=1}^{n-1}a_{\nu}^{\eta}c(dx_{\eta})\overline{c}(dx_{n})\overline{c}(dx_{\nu})\nonumber\\
&-\frac{1}{4}h'(0)\sum_{\mu=1}^{n}\sum_{\nu=1}^{n-1}a_{\nu}^{\mu}c(dx_{\mu})c(dx_{n})c(dx_{\nu})+\overline{c}(V).\nonumber
\end{align}
To shorten notation, we let
\begin{align}
\widetilde{b}_0^1(x_0)&=\frac{1}{4}h'(0)\sum_{\eta=1}^{n}\sum_{\nu=1}^{n-1}a_{\nu}^{\eta}c(dx_{\eta})\overline{c}(dx_{n})\overline{c}(dx_{\nu});\\
\widetilde{b}_0^2(x_0)&=-\frac{1}{4}h'(0)\sum_{\mu=1}^{n}\sum_{\nu=1}^{n-1}a_{\nu}^{\mu}c(dx_{\mu})c(dx_{n})c(dx_{\nu}),
\end{align}
means that
\begin{align}
&\pi^+_{\xi_n}\sigma_{-2}({\widetilde{D}_{W}}^{-1})(x_0)|_{|\xi'|=1}=\pi^+_{\xi_n}\Big(\frac{c[J(\xi)]\widetilde{b}_0^2(x_0)c[J(\xi)]}{(1+\xi_n^2)^2}\Big)-h'(0)\pi^+_{\xi_n}\Big(\frac{c[J(\xi)]}{(1+\xi_n^2)^3}c[J(dx_n)]c[J(\xi)]\Big)\\
&+\pi^+_{\xi_n}\Big(\frac{c[J(\xi)]}{(1+\xi_n^2)^2}\Big[\sum_{j,p,h=1}^{n}\xi_p\partial_{x_j}(a_{h}^{p})c[J(dx_j)]c(dx_h)+\sum_{p=1}^{n}\sum_{h=1}^{n-1}\xi_pa_{h}^{p}c[J(dx_n)]\partial_{x_n}(c(dx_h))\Big]\Big)\nonumber\\
&+\pi^+_{\xi_n}\Big(\frac{c[J(\xi)]\widetilde{b}_0^1(x_0)c[J(\xi)]}{(1+\xi_n^2)^2}\Big)+\pi^+_{\xi_n}\Big(\frac{c[J(\xi)]\overline{c}(V)c[J(\xi)]}{(1+\xi_n^2)^2}\Big).\nonumber
\end{align}

By
\begin{align}
\pi^+_{\xi_n}\left(\frac{1}{(1+\xi_{n}^2)^2}\right)(x_0)
&=\frac{1}{2\pi i}{\rm lim}_{u\rightarrow
0^-}\int_{\Gamma^+}\frac{\frac{1}{(\eta_n+i)^2(\xi_n+iu-\eta_n)}}{(\eta_n-i)^2}d\eta_n\\
&=\left[\frac{1}{(\eta_n+i)^2(\xi_n-\eta_n)}\right]^{(1)}|_{\eta_n=i}=-\frac{i\xi_n+2}{4(\xi_n-i)^2},\nonumber
\end{align}
\begin{align}
\pi^+_{\xi_n}\left(\frac{\xi_n}{(1+\xi_{n}^2)^2}\right)(x_0)
&=\frac{1}{2\pi i}{\rm lim}_{u\rightarrow
0^-}\int_{\Gamma^+}\frac{\frac{\eta_n}{(\eta_n+i)^2(\xi_n+iu-\eta_n)}}{(\eta_n-i)^2}d\eta_n\\
&=\left[\frac{\eta_n}{(\eta_n+i)^2(\xi_n-\eta_n)}\right]^{(1)}|_{\eta_n=i}=-\frac{i}{4(\xi_n-i)^2}\nonumber
\end{align}
and
\begin{align}
\pi^+_{\xi_n}\left(\frac{\xi_n^2}{(1+\xi_{n}^2)^2}\right)(x_0)
&=\frac{1}{2\pi i}{\rm lim}_{u\rightarrow
0^-}\int_{\Gamma^+}\frac{\frac{\eta_n^2}{(\eta_n+i)^2(\xi_n+iu-\eta_n)}}{(\eta_n-i)^2}d\eta_n\\
&=\left[\frac{\eta_n^2}{(\eta_n+i)^2(\xi_n-\eta_n)}\right]^{(1)}|_{\eta_n=i}=-\frac{i\xi_{n}}{4(\xi_n-i)^2},\nonumber
\end{align}
it is evident that
\begin{align}
\pi^+_{\xi_n}\Big(\frac{c[J(\xi)]\widetilde{b}_0^1(x_0)c[J(\xi)]}{(1+\xi_n^2)^2}\Big)&=-\frac{i\xi_n}{4(\xi_n-i)^2}\sum_{l,\gamma=1}^{n}a_{l}^{n}a_{\gamma}^{n}c(dx_l)\widetilde{b}_0^1(x_0)c(dx_{\gamma})\nonumber\\
&-\frac{i}{4(\xi_n-i)^2}\sum_{l,\gamma=1}^{n}\sum_{q=1}^{n-1}\xi_{q}a_{l}^{q}a_{\gamma}^{n}c(dx_l)\widetilde{b}_0^1(x_0)c(dx_{\gamma})\nonumber\\
&-\frac{i}{4(\xi_n-i)^2}\sum_{l,\gamma=1}^{n}\sum_{\alpha=1}^{n-1}\xi_{\alpha}a_{l}^{n}a_{\gamma}^{\alpha}c(dx_l)\widetilde{b}_0^1(x_0)c(dx_{\gamma})\nonumber\\
&-\frac{i\xi_n+2}{4(\xi_n-i)^2}\sum_{l,\gamma=1}^{n}\sum_{q,\alpha=1}^{n-1}\xi_{q}\xi_{\alpha}a_{l}^{q}a_{\gamma}^{\alpha}c(dx_l)\widetilde{b}_0^1(x_0)c(dx_{\gamma}).\nonumber
\end{align}

It is sufficient to show that
\begin{align}
&{\rm tr} [\pi^+_{\xi_n}\Big(\frac{c[J(\xi)]\widetilde{b}_0^1(x_0)c[J(\xi)]}{(1+\xi_n^2)^2}\Big) \times \partial_{\xi_n}\sigma_{-1}({\widetilde{D}_{W}}^{-1})(x_0)]|_{|\xi'|=1}\\
&=-\frac{\xi_n^2}{8(\xi_n-i)^4(\xi_n+i)^2}h'(0)\sum_{l,\gamma,\eta,\beta=1}^{n}\sum_{\nu,i=1}^{n-1}{\rm tr}[\xi_{i}a_{l}^{n}a_{\gamma}^{n}a_{\nu}^{\eta}a_{\beta}^{i}c(dx_{l})c(dx_{\eta})\overline{c}(dx_{n})\overline{c}(dx_{\nu})c(dx_{\gamma})c(dx_{\beta})]\nonumber\\
&+\frac{\xi_n(1-\xi_n^2)}{16(\xi_n-i)^4(\xi_n+i)^2}h'(0)\sum_{l,\gamma,\eta,\beta=1}^{n}\sum_{\nu=1}^{n-1}{\rm tr}[a_{l}^{n}a_{\gamma}^{n}a_{\nu}^{\eta}a_{\beta}^{n}c(dx_{l})c(dx_{\eta})\overline{c}(dx_{n})\overline{c}(dx_{\nu})c(dx_{\gamma})c(dx_{\beta})]\nonumber\\
&-\frac{\xi_n}{8(\xi_n-i)^4(\xi_n+i)^2}h'(0)\sum_{l,\gamma,\eta,\beta=1}^{n}\sum_{q,\nu,i=1}^{n-1}{\rm tr}[\xi_{q}\xi_{i}a_{l}^{q}a_{\gamma}^{n}a_{\nu}^{\eta}a_{\beta}^{i}c(dx_{l})c(dx_{\eta})\overline{c}(dx_{n})\overline{c}(dx_{\nu})c(dx_{\gamma})c(dx_{\beta})]\nonumber\\
&+\frac{1-\xi_n^2}{16(\xi_n-i)^4(\xi_n+i)^2}h'(0)\sum_{l,\gamma,\eta,\beta=1}^{n}\sum_{q,\nu=1}^{n-1}{\rm tr}[\xi_{q}a_{l}^{q}a_{\gamma}^{n}a_{\nu}^{\eta}a_{\beta}^{n}c(dx_{l})c(dx_{\eta})\overline{c}(dx_{n})\overline{c}(dx_{\nu})c(dx_{\gamma})c(dx_{\beta})]\nonumber\\
&-\frac{\xi_n}{8(\xi_n-i)^4(\xi_n+i)^2}h'(0)\sum_{l,\gamma,\eta,\beta=1}^{n}\sum_{\alpha,\nu,i=1}^{n-1}{\rm tr}[\xi_{\alpha}\xi_{i}a_{l}^{n}a_{\gamma}^{\alpha}a_{\nu}^{\eta}a_{\beta}^{i}c(dx_{l})c(dx_{\eta})\overline{c}(dx_{n})\overline{c}(dx_{\nu})c(dx_{\gamma})c(dx_{\beta})]\nonumber
\end{align}
\begin{align}
&+\frac{1-\xi_n^2}{16(\xi_n-i)^4(\xi_n+i)^2}h'(0)\sum_{l,\gamma,\eta,\beta=1}^{n}\sum_{\alpha,\nu=1}^{n-1}{\rm tr}[\xi_{\alpha}a_{l}^{n}a_{\gamma}^{\alpha}a_{\nu}^{\eta}a_{\beta}^{n}c(dx_{l})c(dx_{\eta})\overline{c}(dx_{n})\overline{c}(dx_{\nu})c(dx_{\gamma})c(dx_{\beta})]\nonumber\\
&+\frac{i\xi_n(i\xi_n+2)}{8(\xi_n-i)^4(\xi_n+i)^2}h'(0)\sum_{l,\gamma,\eta,\beta=1}^{n}\sum_{q,\alpha,\nu,i=1}^{n-1}{\rm tr}[\xi_{q}\xi_{\alpha}\xi_{i}a_{l}^{q}a_{\gamma}^{\alpha}a_{\nu}^{\eta}a_{\beta}^{i}c(dx_{l})c(dx_{\eta})\overline{c}(dx_{n})\overline{c}(dx_{\nu})c(dx_{\gamma})c(dx_{\beta})]\nonumber\\
&-\frac{i(i\xi_n+2)(1-\xi_n^2)}{16(\xi_n-i)^4(\xi_n+i)^2}h'(0)\sum_{l,\gamma,\eta,\beta=1}^{n}\sum_{q,\alpha,\nu=1}^{n-1}{\rm tr}[\xi_{q}\xi_{\alpha}a_{l}^{q}a_{\gamma}^{\alpha}a_{\nu}^{\eta}a_{\beta}^{n}c(dx_{l})c(dx_{\eta})\overline{c}(dx_{n})\overline{c}(dx_{\nu})c(dx_{\gamma})c(dx_{\beta})].\nonumber
\end{align}

By the relation of the Clifford action and ${\rm tr}{[AB]}={\rm tr}{[BA]}$,  we have the equality:
\begin{align}
{\rm tr}[c(dx_{l})c(dx_{\eta})\overline{c}(dx_{n})\overline{c}(dx_{\nu})c(dx_{\gamma})c(dx_{\beta})]=\delta_{l}^{\beta}\delta_{\eta}^{\gamma}\delta_{n}^{\nu}{\rm tr}[\texttt{id}]
-\delta_{l}^{\gamma}\delta_{\eta}^{\beta}\delta_{n}^{\nu}{\rm tr}[\texttt{id}]
+\delta_{l}^{\eta}\delta_{n}^{\nu}\delta_{\gamma}^{\beta}{\rm tr}[\texttt{id}],
\end{align}
in this way
\begin{align}
\sum_{l,\gamma,\eta,\beta=1}^{n}\sum_{\nu=1}^{n-1}{\rm tr}[c(dx_{l})c(dx_{\eta})\overline{c}(dx_{n})\overline{c}(dx_{\nu})c(dx_{\gamma})c(dx_{\beta})]=0.
\end{align}

On account of the above formulas, we have
\begin{align}
-i\int_{|\xi'|=1}\int^{+\infty}_{-\infty}{\rm tr}_{\wedge^*T^*M}[\pi^+_{\xi_n}\Big(\frac{c[J(\xi)]\widetilde{b}_0^1(x_0)c[J(\xi)]}{(1+\xi_n^2)^2}\Big) \times \partial_{\xi_n}\sigma_{-1}({\widetilde{D}_{W}}^{-1})(x_0)]d\xi_n\sigma(\xi')dx'=0.
\end{align}

Similarly, we have
\begin{align}
\pi^+_{\xi_n}\Big(\frac{c[J(\xi)]\overline{c}(V)c[J(\xi)]}{(1+\xi_n^2)^2}\Big)&=-\frac{i\xi_n}{4(\xi_n-i)^2}\sum_{l,\gamma=1}^{n}a_{l}^{n}a_{\gamma}^{n}c(dx_l)\overline{c}(V)c(dx_{\gamma})\nonumber\\
&-\frac{i}{4(\xi_n-i)^2}\sum_{l,\gamma=1}^{n}\sum_{q=1}^{n-1}\xi_{q}a_{l}^{q}a_{\gamma}^{n}c(dx_l)\overline{c}(V)c(dx_{\gamma})\nonumber\\
&-\frac{i}{4(\xi_n-i)^2}\sum_{l,\gamma=1}^{n}\sum_{\alpha=1}^{n-1}\xi_{\alpha}a_{l}^{n}a_{\gamma}^{\alpha}c(dx_l)\overline{c}(V)c(dx_{\gamma})\nonumber\\
&-\frac{i\xi_n+2}{4(\xi_n-i)^2}\sum_{l,\gamma=1}^{n}\sum_{q,\alpha=1}^{n-1}\xi_{q}\xi_{\alpha}a_{l}^{q}a_{\gamma}^{\alpha}c(dx_l)\overline{c}(V)c(dx_{\gamma}).\nonumber
\end{align}

Therefore
\begin{align}
&{\rm tr} [\pi^+_{\xi_n}\Big(\frac{c[J(\xi)]\overline{c}(V)c[J(\xi)]}{(1+\xi_n^2)^2}\Big) \times \partial_{\xi_n}\sigma_{-1}({\widetilde{D}_{W}}^{-1})(x_0)]|_{|\xi'|=1}\\
&=-\frac{\xi_n^2}{2(\xi_n-i)^4(\xi_n+i)^2}h'(0)\sum_{l,\gamma,\beta=1}^{n}\sum_{i=1}^{n-1}{\rm tr}[\xi_{i}a_{l}^{n}a_{\gamma}^{n}a_{\beta}^{i}c(dx_{l})\overline{c}(V)c(dx_{\gamma})c(dx_{\beta})]\nonumber\\
&+\frac{\xi_n(1-\xi_n^2)}{4(\xi_n-i)^4(\xi_n+i)^2}h'(0)\sum_{l,\gamma,\beta=1}^{n}{\rm tr}[a_{l}^{n}a_{\gamma}^{n}a_{\beta}^{n}c(dx_{l})\overline{c}(V)c(dx_{\gamma})c(dx_{\beta})]\nonumber\\
&-\frac{\xi_n}{2(\xi_n-i)^4(\xi_n+i)^2}h'(0)\sum_{l,\gamma,\beta=1}^{n}\sum_{q,i=1}^{n-1}{\rm tr}[\xi_{q}\xi_{i}a_{l}^{q}a_{\gamma}^{n}a_{\beta}^{i}c(dx_{l})\overline{c}(V)c(dx_{\gamma})c(dx_{\beta})]\nonumber\\
&+\frac{1-\xi_n^2}{4(\xi_n-i)^4(\xi_n+i)^2}h'(0)\sum_{l,\gamma,\beta=1}^{n}\sum_{q=1}^{n-1}{\rm tr}[\xi_{q}a_{l}^{q}a_{\gamma}^{n}a_{\beta}^{n}c(dx_{l})\overline{c}(V)c(dx_{\gamma})c(dx_{\beta})]\nonumber\\
&-\frac{\xi_n}{2(\xi_n-i)^4(\xi_n+i)^2}h'(0)\sum_{l,\gamma,\beta=1}^{n}\sum_{\alpha,i=1}^{n-1}{\rm tr}[\xi_{\alpha}\xi_{i}a_{l}^{n}a_{\gamma}^{\alpha}a_{\beta}^{i}c(dx_{l})\overline{c}(V)c(dx_{\gamma})c(dx_{\beta})]\nonumber\\
&+\frac{1-\xi_n^2}{4(\xi_n-i)^4(\xi_n+i)^2}h'(0)\sum_{l,\gamma,\beta=1}^{n}\sum_{\alpha=1}^{n-1}{\rm tr}[\xi_{\alpha}a_{l}^{n}a_{\gamma}^{\alpha}a_{\beta}^{n}c(dx_{l})\overline{c}(V)c(dx_{\gamma})c(dx_{\beta})]\nonumber\\
&+\frac{i\xi_n(i\xi_n+2)}{2(\xi_n-i)^4(\xi_n+i)^2}h'(0)\sum_{l,\gamma,\beta=1}^{n}\sum_{q,\alpha,i=1}^{n-1}{\rm tr}[\xi_{q}\xi_{\alpha}\xi_{i}a_{l}^{q}a_{\gamma}^{\alpha}a_{\beta}^{i}c(dx_{l})\overline{c}(V)c(dx_{\gamma})c(dx_{\beta})]\nonumber\\
&-\frac{i(i\xi_n+2)(1-\xi_n^2)}{4(\xi_n-i)^4(\xi_n+i)^2}h'(0)\sum_{l,\gamma,\beta=1}^{n}\sum_{q,\alpha=1}^{n-1}{\rm tr}[\xi_{q}\xi_{\alpha}a_{l}^{q}a_{\gamma}^{\alpha}a_{\beta}^{n}c(dx_{l})\overline{c}(V)c(dx_{\gamma})c(dx_{\beta})].\nonumber
\end{align}

Applying $c(e_i)c(e_j)+c(e_j)c(e_i)=-2\delta_i^j,$ $\overline{c}(e_i)c(e_j)+c(e_j)\overline{c}(e_i)=0$ and ${\rm tr}{AB}={\rm tr}{BA}$,  we have
\begin{align}
{\rm tr}[c(dx_{l})\overline{c}(V)c(dx_{\gamma})c(dx_{\beta})]=0.
\end{align}

Clearly,
\begin{align}
-i\int_{|\xi'|=1}\int^{+\infty}_{-\infty}{\rm tr}_{\wedge^*T^*M}[\pi^+_{\xi_n}\Big(\frac{c[J(\xi)]\overline{c}(V)c[J(\xi)]}{(1+\xi_n^2)^2}\Big) \times \partial_{\xi_n}\sigma_{-1}({\widetilde{D}_{W}}^{-1})(x_0)]d\xi_n\sigma(\xi')dx'=0.
\end{align}

As in \cite{LW2}, we have
\begin{align}
\Psi_4&=-i\int_{|\xi'|=1}\int^{+\infty}_{-\infty}{\rm tr}_{\wedge^*T^*M}[\pi^+_{\xi_n}\Big(\frac{c[J(\xi)]\widetilde{b}_0^1(x_0)c[J(\xi)]}{(1+\xi_n^2)^2}\Big) \times \partial_{\xi_n}\sigma_{-1}({\widetilde{D}_{W}}^{-1})(x_0)]d\xi_n\sigma(\xi')dx'\\
&-i\int_{|\xi'|=1}\int^{+\infty}_{-\infty}{\rm tr}_{\wedge^*T^*M}[\pi^+_{\xi_n}\Big(\frac{c[J(\xi)]\overline{c}(V)c[J(\xi)]}{(1+\xi_n^2)^2}\Big) \times \partial_{\xi_n}\sigma_{-1}({\widetilde{D}_{W}}^{-1})(x_0)]d\xi_n\sigma(\xi')dx'\nonumber\\
&-i\int_{|\xi'|=1}\int^{+\infty}_{-\infty}{\rm tr}_{\wedge^*T^*M}[\Big(\pi^+_{\xi_n}\Big(\frac{c[J(\xi)]\widetilde{b}_0^2(x_0)c[J(\xi)]}{(1+\xi_n^2)^2}\Big)-h'(0)\pi^+_{\xi_n}\Big(\frac{c[J(\xi)]}{(1+\xi_n^2)^3}c[J(dx_n)]c[J(\xi)]\Big)\nonumber\\
&+\pi^+_{\xi_n}\Big(\frac{c[J(\xi)]}{(1+\xi_n^2)^2}\Big[\sum_{j,p,h=1}^{n}\xi_p\partial_{x_j}(a_{h}^{p})c[J(dx_j)]c(dx_h)+\sum_{p=1}^{n}\sum_{h=1}^{n-1}\xi_pa_{h}^{p}c[J(dx_n)]\partial_{x_n}(c(dx_h))\Big]\Big)\Big)\nonumber\\ &\times \partial_{\xi_n}\sigma_{-1}({\widetilde{D}_{W}}^{-1})(x_0)]d\xi_n\sigma(\xi')dx'\nonumber\\
&=\sum_{l=1}^{n}\sum_{\nu,i=1}^{n-1}(-2(a_{\nu}^{n})^2(a_{l}^{i})^2+2(a_{l}^{i})^2a_{\nu}^{\nu}a_{n}^{n}+2a_{\nu}^{i}a_{l}^{i}a_{\nu}^{n}a_{l}^{n}-2a_{i}^{i}a_{\nu}^{\nu}){\rm tr}[\texttt{id}]\Omega_3h'(0)(\frac{\pi^2}{24})dx'\nonumber\\
&+\sum_{l,j,\beta=1}^{n}\sum_{i=1}^{n-1}\left((a_{\beta}^{i})^2a_{l}^{j}\partial_{x_j}(a_{l}^{n})-a_{l}^{i}a_{\beta}^{j}a_{\beta}^{i}\partial_{x_j}(a_{l}^{n})+a_{l}^{i}a_{l}^{j}a_{\beta}^{i}\partial_{x_j}(a_{\beta}^{n})\right){\rm tr}[\texttt{id}]\Omega_3(-\frac{\pi^2}{12})dx'\nonumber\\
&+\sum_{l,j,\beta=1}^{n}\sum_{i=1}^{n-1}\left(a_{\beta}^{n}a_{l}^{j}a_{\beta}^{i}\partial_{x_j}(a_{l}^{i})-a_{l}^{n}a_{\beta}^{j}a_{\beta}^{i}\partial_{x_j}(a_{l}^{i})+a_{l}^{n}a_{l}^{j}a_{\beta}^{i}\partial_{x_j}(a_{\beta}^{i})\right){\rm tr}[\texttt{id}]\Omega_3(-\frac{\pi^2}{12})dx'\nonumber\\
&+\sum_{l,j,\beta=1}^{n}\sum_{i=1}^{n-1}\left(a_{\beta}^{i}a_{l}^{j}a_{\beta}^{n}\partial_{x_j}(a_{l}^{i})-a_{l}^{i}a_{\beta}^{j}a_{\beta}^{n}\partial_{x_j}(a_{l}^{i})+a_{l}^{i}a_{l}^{j}a_{\beta}^{n}\partial_{x_j}(a_{\beta}^{i})\right){\rm tr}[\texttt{id}]\Omega_3(\frac{\pi^2}{6})dx'\nonumber\\
&+\sum_{l=1}^{n}\sum_{\nu,i=1}^{n-1}(a_{\nu}^{n})^2(a_{l}^{i})^2{\rm tr}[\texttt{id}]\Omega_3h'(0)(-\frac{\pi^2}{24})dx'+\sum_{l=1}^{n}\sum_{\nu,i=1}^{n-1}(a_{\nu}^{i})^2(a_{l}^{n})^2{\rm tr}[\texttt{id}]\Omega_3h'(0)(-\frac{\pi^2}{24})dx'\nonumber\\
&+\sum_{l=1}^{n}\sum_{\nu,i=1}^{n-1}\left(2a_{\nu}^{i}a_{l}^{i}a_{\nu}^{n}a_{l}^{n}-(a_{\nu}^{i})^2(a_{l}^{n})^2\right){\rm tr}[\texttt{id}]\Omega_3h'(0)(\frac{\pi^2}{12})dx'\nonumber\\
&+\sum_{\beta,l=1}^{n}(a_{\beta}^{n})^2(a_{l}^{n})^2{\rm tr}[\texttt{id}]\Omega_3h'(0)(-\frac{\pi}{128})dx'+\sum_{\beta,l=1}^{n}\sum_{i=1}^{n-1}(a_{\beta}^{i})^2(a_{l}^{n})^2{\rm tr}[\texttt{id}]\Omega_3h'(0)(\frac{5\pi^2}{48})dx'\nonumber\\
&+\sum_{\beta,l=1}^{n}\sum_{i=1}^{n-1}\left(2a_{l}^{i}a_{\beta}^{i}a_{l}^{n}a_{\beta}^{n}-(a_{l}^{i})^2(a_{\beta}^{n})^2\right){\rm tr}[\texttt{id}]\Omega_3h'(0)(-\frac{5\pi^2}{32})dx'.\nonumber
\end{align}

{\bf  case (c)}~$r=-1,~l=-2,~k=j=|\alpha|=0$\\

We calculate
\begin{align}
\Psi_5=-i\int_{|\xi'|=1}\int^{+\infty}_{-\infty}{\rm tr}_{\wedge^*T^*M} [\pi^+_{\xi_n}\sigma_{-1}({\widetilde{D}_{W}}^{-1})\times
\partial_{\xi_n}\sigma_{-2}({\widetilde{D}_{W}}^{-1})](x_0)d\xi_n\sigma(\xi')dx'.
\end{align}

Following the same method as (3.38)-(3.40), we can get
\begin{align}
\pi^+_{\xi_n}\left(\frac{1}{1+\xi_{n}^2}\right)(x_0)=\frac{1}{2i(\xi_n-i)};\\
\pi^+_{\xi_n}\left(\frac{\xi_{n}}{1+\xi_{n}^2}\right)(x_0)=\frac{1}{2(\xi_n-i)}.
\end{align}
Consequently,
\begin{align}
\pi^+_{\xi_n}\left(\frac{ic[J(\xi)]}{|\xi|^2}\right)(x_0)|_{|\xi'|=1}
=\frac{1}{2(\xi_{n}-i)}\sum^{n}_{\beta=1}\sum^{n-1}_{i=1}\xi_{i}a_{\beta}^{i}c(dx_{\beta})+\frac{i}{2(\xi_{n}-i)}\sum^{n}_{\beta=1}a_{\beta}^{n}c(dx_{\beta}).
\end{align}

We check at once that
\begin{align}
&\partial_{\xi_n}\sigma_{-2}({\widetilde{D}_{W}}^{-1})(x_0)|_{|\xi'|=1}=\partial_{\xi_n}\Big(\frac{c[J(\xi)]\widetilde{b}_0^2(x_0)c[J(\xi)]}{(1+\xi_n^2)^2}\Big)-h'(0)\partial_{\xi_n}\Big(\frac{c[J(\xi)]}{(1+\xi_n^2)^3}c[J(dx_n)]c[J(\xi)]\Big)\\
&+\partial_{\xi_n}\Big(\frac{c[J(\xi)]}{(1+\xi_n^2)^2}\Big[\sum_{j,p,h=1}^{n}\xi_p\partial_{x_j}(a_{h}^{p})c[J(dx_j)]c(dx_h)+\sum_{p=1}^{n}\sum_{h=1}^{n-1}\xi_pa_{h}^{p}c[J(dx_n)]\partial_{x_n}(c(dx_h))\Big]\Big)\nonumber\\
&+\partial_{\xi_n}\Big(\frac{c[J(\xi)]\widetilde{b}_0^1(x_0)c[J(\xi)]}{(1+\xi_n^2)^2}\Big)+\partial_{\xi_n}\Big(\frac{c[J(\xi)]\overline{c}(V)c[J(\xi)]}{(1+\xi_n^2)^2}\Big).\nonumber
\end{align}

By computation, we have
\begin{align}
\partial_{\xi_n}\Big(\frac{c[J(\xi)]\widetilde{b}_0^1(x_0)c[J(\xi)]}{(1+\xi_n^2)^2}\Big)&=-\frac{2\xi_n(-1+\xi_n^2)}{(1+\xi_n^2)^3}\sum_{l,\gamma=1}^{n}a_{l}^{n}a_{\gamma}^{n}c(dx_l)\widetilde{b}_0^1(x_0)c(dx_{\gamma})\\
&+\frac{1-3\xi_n^2}{(1+\xi_n^2)^3}\sum_{l,\gamma=1}^{n}\sum_{q=1}^{n-1}\xi_{q}a_{l}^{q}a_{\gamma}^{n}c(dx_l)\widetilde{b}_0^1(x_0)c(dx_{\gamma})\nonumber\\
&+\frac{1-3\xi_n^2}{(1+\xi_n^2)^3}\sum_{l,\gamma=1}^{n}\sum_{\alpha=1}^{n-1}\xi_{\alpha}a_{l}^{n}a_{\gamma}^{\alpha}c(dx_l)\widetilde{b}_0^1(x_0)c(dx_{\gamma})\nonumber\\
&-\frac{4\xi_n}{(1+\xi_n^2)^3}\sum_{l,\gamma=1}^{n}\sum_{q,\alpha=1}^{n-1}\xi_{q}\xi_{\alpha}a_{l}^{q}a_{\gamma}^{\alpha}c(dx_l)\widetilde{b}_0^1(x_0)c(dx_{\gamma}),\nonumber
\end{align}
for this reason
\begin{align}
&{\rm tr} [\pi^+_{\xi_n}\sigma_{-1}({\widetilde{D}_{W}}^{-1})(x_0) \times \partial_{\xi_n}\Big(\frac{c[J(\xi)]\widetilde{b}_0^1(x_0)c[J(\xi)]}{(1+\xi_n^2)^2}\Big)]|_{|\xi'|=1}\\
&=-\frac{\xi_n}{2(\xi_n-i)^4(\xi_n+i)^3}h'(0)\sum_{\beta,l,\gamma,\eta=1}^{n}\sum_{i,q,\alpha,\nu=1}^{n-1}{\rm tr}[\xi_{i}\xi_{q}\xi_{\alpha}a_{\beta}^{i}a_{l}^{q}a_{\gamma}^{\alpha}a_{\nu}^{\eta}c(dx_{\beta})c(dx_{l})c(dx_{\eta})\overline{c}(dx_{n})\overline{c}(dx_{\nu})c(dx_{\gamma})]\nonumber\\
&-\frac{i\xi_n}{2(\xi_n-i)^4(\xi_n+i)^3}h'(0)\sum_{\beta,l,\gamma,\eta=1}^{n}\sum_{q,\alpha,\nu=1}^{n-1}{\rm tr}[\xi_{q}\xi_{\alpha}a_{\beta}^{n}a_{l}^{q}a_{\gamma}^{\alpha}a_{\nu}^{\eta}c(dx_{\beta})c(dx_{l})c(dx_{\eta})\overline{c}(dx_{n})\overline{c}(dx_{\nu})c(dx_{\gamma})]\nonumber\\
&+\frac{1-3\xi_n^2}{8(\xi_n-i)^4(\xi_n+i)^3}h'(0)\sum_{\beta,l,\gamma,\eta=1}^{n}\sum_{i,\alpha,\nu=1}^{n-1}{\rm tr}[\xi_{i}\xi_{\alpha}a_{\beta}^{i}a_{l}^{n}a_{\gamma}^{\alpha}a_{\nu}^{\eta}c(dx_{\beta})c(dx_{l})c(dx_{\eta})\overline{c}(dx_{n})\overline{c}(dx_{\nu})c(dx_{\gamma})]\nonumber\\
&+\frac{i(1-3\xi_n^2)}{8(\xi_n-i)^4(\xi_n+i)^3}h'(0)\sum_{\beta,l,\gamma,\eta=1}^{n}\sum_{\alpha,\nu=1}^{n-1}{\rm tr}[\xi_{\alpha}a_{\beta}^{n}a_{l}^{n}a_{\gamma}^{\alpha}a_{\nu}^{\eta}c(dx_{\beta})c(dx_{l})c(dx_{\eta})\overline{c}(dx_{n})\overline{c}(dx_{\nu})c(dx_{\gamma})]\nonumber\\
&+\frac{1-3\xi_n^2}{8(\xi_n-i)^4(\xi_n+i)^3}h'(0)\sum_{\beta,l,\gamma,\eta=1}^{n}\sum_{i,q,\nu=1}^{n-1}{\rm tr}[\xi_{i}\xi_{q}a_{\beta}^{i}a_{l}^{q}a_{\gamma}^{n}a_{\nu}^{\eta}c(dx_{\beta})c(dx_{l})c(dx_{\eta})\overline{c}(dx_{n})\overline{c}(dx_{\nu})c(dx_{\gamma})]\nonumber\\
&+\frac{i(1-3\xi_n^2)}{8(\xi_n-i)^4(\xi_n+i)^3}h'(0)\sum_{\beta,l,\gamma,\eta=1}^{n}\sum_{q,\nu=1}^{n-1}{\rm tr}[\xi_{q}a_{\beta}^{n}a_{l}^{q}a_{\gamma}^{n}a_{\nu}^{\eta}c(dx_{\beta})c(dx_{l})c(dx_{\eta})\overline{c}(dx_{n})\overline{c}(dx_{\nu})c(dx_{\gamma})]\nonumber\\
&-\frac{\xi_n(-1+\xi_n^2)}{4(\xi_n-i)^4(\xi_n+i)^3}h'(0)\sum_{\beta,l,\gamma,\eta=1}^{n}\sum_{i,\nu=1}^{n-1}{\rm tr}[\xi_{i}a_{\beta}^{i}a_{l}^{n}a_{\gamma}^{n}a_{\nu}^{\eta}c(dx_{\beta})c(dx_{l})c(dx_{\eta})\overline{c}(dx_{n})\overline{c}(dx_{\nu})c(dx_{\gamma})]\nonumber\\
&-\frac{i\xi_n(-1+\xi_n^2)}{4(\xi_n-i)^4(\xi_n+i)^3}h'(0)\sum_{\beta,l,\gamma,\eta=1}^{n}\sum_{\nu=1}^{n-1}{\rm tr}[a_{\beta}^{n}a_{l}^{n}a_{\gamma}^{n}a_{\nu}^{\eta}c(dx_{\beta})c(dx_{l})c(dx_{\eta})\overline{c}(dx_{n})\overline{c}(dx_{\nu})c(dx_{\gamma})].\nonumber
\end{align}

Since ${\rm tr}{[AB]}={\rm tr}{[BA]},$ (3.43) shows that
\begin{align}
\sum_{l,\gamma,\eta,\beta=1}^{n}\sum_{\nu=1}^{n-1}{\rm tr}[c(dx_{\beta})c(dx_{l})c(dx_{\eta})\overline{c}(dx_{n})\overline{c}(dx_{\nu})c(dx_{\gamma})]=0,
\end{align}
it is shown that
\begin{align}
&-i\int_{|\xi'|=1}\int^{+\infty}_{-\infty}{\rm tr}_{\wedge^*T^*M} [\pi^+_{\xi_n}\sigma_{-1}({\widetilde{D}_{W}}^{-1})(x_0) \times \partial_{\xi_n}\Big(\frac{c[J(\xi)]\widetilde{b}_0^1(x_0)c[J(\xi)]}{(1+\xi_n^2)^2}\Big)]d\xi_n\sigma(\xi')dx'=0.
\end{align}

Similarly to (3.54) and (3.57), we have
\begin{align}
\partial_{\xi_n}\Big(\frac{c[J(\xi)]\overline{c}(V)c[J(\xi)]}{(1+\xi_n^2)^2}\Big)&=-\frac{2\xi_n(-1+\xi_n^2)}{(1+\xi_n^2)^3}\sum_{l,\gamma=1}^{n}a_{l}^{n}a_{\gamma}^{n}c(dx_l)\overline{c}(V)c(dx_{\gamma})\\
&+\frac{1-3\xi_n^2}{(1+\xi_n^2)^3}\sum_{l,\gamma=1}^{n}\sum_{q=1}^{n-1}\xi_{q}a_{l}^{q}a_{\gamma}^{n}c(dx_l)\overline{c}(V)c(dx_{\gamma})\nonumber\\
&+\frac{1-3\xi_n^2}{(1+\xi_n^2)^3}\sum_{l,\gamma=1}^{n}\sum_{\alpha=1}^{n-1}\xi_{\alpha}a_{l}^{n}a_{\gamma}^{\alpha}c(dx_l)\overline{c}(V)c(dx_{\gamma})\nonumber\\
&-\frac{4\xi_n}{(1+\xi_n^2)^3}\sum_{l,\gamma=1}^{n}\sum_{q,\alpha=1}^{n-1}\xi_{q}\xi_{\alpha}a_{l}^{q}a_{\gamma}^{\alpha}c(dx_l)\overline{c}(V)c(dx_{\gamma}),\nonumber
\end{align}
\begin{align}
&{\rm tr} [\pi^+_{\xi_n}\sigma_{-1}({\widetilde{D}_{W}}^{-1})(x_0) \times \partial_{\xi_n}\Big(\frac{c[J(\xi)]\overline{c}(V)c[J(\xi)]}{(1+\xi_n^2)^2}\Big)]|_{|\xi'|=1}\\
&=-\frac{2\xi_n}{(\xi_n-i)^4(\xi_n+i)^3}h'(0)\sum_{\beta,l,\gamma=1}^{n}\sum_{i,q,\alpha=1}^{n-1}{\rm tr}[\xi_{i}\xi_{q}\xi_{\alpha}a_{\beta}^{i}a_{l}^{q}a_{\gamma}^{\alpha}c(dx_{\beta})c(dx_{l})\overline{c}(V)c(dx_{\gamma})]\nonumber\\
&-\frac{2i\xi_n}{(\xi_n-i)^4(\xi_n+i)^3}h'(0)\sum_{\beta,l,\gamma=1}^{n}\sum_{q,\alpha=1}^{n-1}{\rm tr}[\xi_{q}\xi_{\alpha}a_{\beta}^{n}a_{l}^{q}a_{\gamma}^{\alpha}c(dx_{\beta})c(dx_{l})\overline{c}(V)c(dx_{\gamma})]\nonumber\\
&+\frac{1-3\xi_n^2}{2(\xi_n-i)^4(\xi_n+i)^3}h'(0)\sum_{\beta,l,\gamma=1}^{n}\sum_{i,\alpha=1}^{n-1}{\rm tr}[\xi_{i}\xi_{\alpha}a_{\beta}^{i}a_{l}^{n}a_{\gamma}^{\alpha}c(dx_{\beta})c(dx_{l})\overline{c}(V)c(dx_{\gamma})]\nonumber\\
&+\frac{i(1-3\xi_n^2)}{2(\xi_n-i)^4(\xi_n+i)^3}h'(0)\sum_{\beta,l,\gamma=1}^{n}\sum_{\alpha=1}^{n-1}{\rm tr}[\xi_{\alpha}a_{\beta}^{n}a_{l}^{n}a_{\gamma}^{\alpha}c(dx_{\beta})c(dx_{l})\overline{c}(V)c(dx_{\gamma})]\nonumber\\
&+\frac{1-3\xi_n^2}{2(\xi_n-i)^4(\xi_n+i)^3}h'(0)\sum_{\beta,l,\gamma=1}^{n}\sum_{i,q=1}^{n-1}{\rm tr}[\xi_{i}\xi_{q}a_{\beta}^{i}a_{l}^{q}a_{\gamma}^{n}c(dx_{\beta})c(dx_{l})\overline{c}(V)c(dx_{\gamma})]\nonumber\\
&+\frac{i(1-3\xi_n^2)}{2(\xi_n-i)^4(\xi_n+i)^3}h'(0)\sum_{\beta,l,\gamma=1}^{n}\sum_{q=1}^{n-1}{\rm tr}[\xi_{q}a_{\beta}^{n}a_{l}^{q}a_{\gamma}^{n}c(dx_{\beta})c(dx_{l})\overline{c}(V)c(dx_{\gamma})]\nonumber\\
&-\frac{\xi_n(-1+\xi_n^2)}{(\xi_n-i)^4(\xi_n+i)^3}h'(0)\sum_{\beta,l,\gamma=1}^{n}\sum_{i=1}^{n-1}{\rm tr}[\xi_{i}a_{\beta}^{i}a_{l}^{n}a_{\gamma}^{n}c(dx_{\beta})c(dx_{l})\overline{c}(V)c(dx_{\gamma})]\nonumber\\
&-\frac{i\xi_n(-1+\xi_n^2)}{(\xi_n-i)^4(\xi_n+i)^3}h'(0)\sum_{\beta,l,\gamma=1}^{n}{\rm tr}[a_{\beta}^{n}a_{l}^{n}a_{\gamma}^{n}c(dx_{\beta})c(dx_{l})\overline{c}(V)c(dx_{\gamma})]\nonumber
\end{align}
and
\begin{align}
&-i\int_{|\xi'|=1}\int^{+\infty}_{-\infty}{\rm tr}_{\wedge^*T^*M} [\pi^+_{\xi_n}\sigma_{-1}({\widetilde{D}_{W}}^{-1})(x_0) \times \partial_{\xi_n}\Big(\frac{c[J(\xi)]\overline{c}(V)c[J(\xi)]}{(1+\xi_n^2)^2}\Big)]d\xi_n\sigma(\xi')dx'=0.
\end{align}

We can rewrite (3.49) as
\begin{align}
\Psi_5&=-i\int_{|\xi'|=1}\int^{+\infty}_{-\infty}{\rm tr}_{\wedge^*T^*M}[\pi^+_{\xi_n}\sigma_{-1}({\widetilde{D}_{W}}^{-1})(x_0) \times \partial_{\xi_n}\Big(\frac{c[J(\xi)]\widetilde{b}_0^1(x_0)c[J(\xi)]}{(1+\xi_n^2)^2}\Big)]d\xi_n\sigma(\xi')dx'\\
&-i\int_{|\xi'|=1}\int^{+\infty}_{-\infty}{\rm tr}_{\wedge^*T^*M}[\pi^+_{\xi_n}\sigma_{-1}({\widetilde{D}_{W}}^{-1})(x_0) \times \partial_{\xi_n}\Big(\frac{c[J(\xi)]\overline{c}(V)c[J(\xi)]}{(1+\xi_n^2)^2}\Big)]d\xi_n\sigma(\xi')dx'\nonumber\\
&-i\int_{|\xi'|=1}\int^{+\infty}_{-\infty}{\rm tr}_{\wedge^*T^*M}[\pi^+_{\xi_n}\sigma_{-1}({\widetilde{D}_{W}}^{-1})(x_0) \times \Big(\partial_{\xi_n}\Big(\frac{c[J(\xi)]\widetilde{b}_0^2(x_0)c[J(\xi)]}{(1+\xi_n^2)^2}\Big)\nonumber\\
&-h'(0)\partial_{\xi_n}\Big(\frac{c[J(\xi)]}{(1+\xi_n^2)^3}c[J(dx_n)]c[J(\xi)]\Big)+\partial_{\xi_n}\Big(\frac{c[J(\xi)]}{(1+\xi_n^2)^2}\Big[\sum_{j,p,h=1}^{n}\xi_p\partial_{x_j}(a_{h}^{p})c[J(dx_j)]c(dx_h)\nonumber\\
&+\sum_{p=1}^{n}\sum_{h=1}^{n-1}\xi_pa_{h}^{p}c[J(dx_n)]\partial_{x_n}(c(dx_h))\Big]\Big)\Big)]d\xi_n\sigma(\xi')dx'\nonumber\\
&=\sum_{l=1}^{n}\sum_{\nu,i=1}^{n-1}(-2(a_{\nu}^{n})^2(a_{l}^{i})^2+2(a_{l}^{i})^2a_{\nu}^{\nu}a_{n}^{n}+2a_{\nu}^{i}a_{l}^{i}a_{\nu}^{n}a_{l}^{n}-2a_{i}^{i}a_{\nu}^{\nu}){\rm tr}[\texttt{id}]\Omega_3h'(0)(-\frac{\pi^2}{24})dx'\nonumber\\
&+\sum_{l,j,\beta=1}^{n}\sum_{i=1}^{n-1}\left((a_{\beta}^{i})^2a_{l}^{j}\partial_{x_j}(a_{l}^{n})-a_{l}^{i}a_{\beta}^{j}a_{\beta}^{i}\partial_{x_j}(a_{l}^{n})+a_{l}^{i}a_{l}^{j}a_{\beta}^{i}\partial_{x_j}(a_{\beta}^{n})\right){\rm tr}[\texttt{id}]\Omega_3(\frac{\pi^2}{12})dx'\nonumber\\
&+\sum_{l,j,\beta=1}^{n}\sum_{i=1}^{n-1}\left(a_{\beta}^{n}a_{l}^{j}a_{\beta}^{i}\partial_{x_j}(a_{l}^{i})-a_{l}^{n}a_{\beta}^{j}a_{\beta}^{i}\partial_{x_j}(a_{l}^{i})+a_{l}^{n}a_{l}^{j}a_{\beta}^{i}\partial_{x_j}(a_{\beta}^{i})\right){\rm tr}[\texttt{id}]\Omega_3(\frac{\pi^2}{12})dx'\nonumber\\
&+\sum_{l,j,\beta=1}^{n}\sum_{i=1}^{n-1}\left(a_{\beta}^{i}a_{l}^{j}a_{\beta}^{n}\partial_{x_j}(a_{l}^{i})-a_{l}^{i}a_{\beta}^{j}a_{\beta}^{n}\partial_{x_j}(a_{l}^{i})+a_{l}^{i}a_{l}^{j}a_{\beta}^{n}\partial_{x_j}(a_{\beta}^{i})\right){\rm tr}[\texttt{id}]\Omega_3(-\frac{\pi^2}{6})dx'\nonumber\\
&+\sum_{l=1}^{n}\sum_{\nu,i=1}^{n-1}(a_{\nu}^{n})^2(a_{l}^{i})^2{\rm tr}[\texttt{id}]\Omega_3h'(0)(\frac{\pi^2}{24})dx'+\sum_{l=1}^{n}\sum_{\nu,i=1}^{n-1}(a_{\nu}^{i})^2(a_{l}^{n})^2{\rm tr}[\texttt{id}]\Omega_3h'(0)(\frac{\pi^2}{24})dx'\nonumber\\
&+\sum_{l=1}^{n}\sum_{\nu,i=1}^{n-1}\left(2a_{\nu}^{i}a_{l}^{i}a_{\nu}^{n}a_{l}^{n}-(a_{\nu}^{i})^2(a_{l}^{n})^2\right){\rm tr}[\texttt{id}]\Omega_3h'(0)(-\frac{\pi^2}{12})dx'\nonumber
\end{align}
\begin{align}
&+\sum_{\beta,l=1}^{n}(a_{\beta}^{n})^2(a_{l}^{n})^2{\rm tr}[\texttt{id}]\Omega_3h'(0)(\frac{\pi}{128})dx'+\sum_{\beta,l=1}^{n}\sum_{i=1}^{n-1}(a_{\beta}^{i})^2(a_{l}^{n})^2{\rm tr}[\texttt{id}]\Omega_3h'(0)(-\frac{5\pi^2}{48})dx'\nonumber\\
&+\sum_{\beta,l=1}^{n}\sum_{i=1}^{n-1}\left(2a_{l}^{i}a_{\beta}^{i}a_{l}^{n}a_{\beta}^{n}-(a_{l}^{i})^2(a_{\beta}^{n})^2\right){\rm tr}[\texttt{id}]\Omega_3h'(0)(\frac{5\pi^2}{32})dx'.\nonumber
\end{align}

In summary,
\begin{align}
\Psi&=\Psi_1+\Psi_2+\Psi_3+\Psi_4+\Psi_5\\
&=\sum_{\beta=1}^{n}\sum_{i=1}^{n-1}a_{\beta}^{i}\partial_{x_i}(a_{\beta}^{n}){\rm tr}[\texttt{id}]\Omega_3(-\frac{\pi}{8}+\frac{\pi^2}{3})dx'+\sum_{\beta=1}^{n}\sum_{i=1}^{n-1}a_{\beta}^{n}\partial_{x_i}(a_{\beta}^{i}){\rm tr}[\texttt{id}]\Omega_3(-\frac{\pi^2}{6})dx'.\nonumber
\end{align}

Applying Lemma 3.8 in \cite{LW2}, we see that
\begin{lem}{\rm \cite{LW2}}
\begin{align}
\sum_{\beta=1}^{n}\sum_{i=1}^{n-1}a_{\beta}^{i}\partial_{x_i}(a_{\beta}^{n})=\sum_{\beta=1}^{n}\langle\nabla_{J(e_{\beta})}^{L}(Je_{n}), e_{\beta}\rangle-\sum_{\beta=1}^{n}g^{M}\left(J(\frac{\partial}{\partial{x_{n}}}), \frac{\partial}{\partial{x_{n}}}\right)\langle\nabla_{J(e_{\beta})}^{L}(\frac{\partial}{\partial{x_{n}}}), e_{\beta}\rangle,
\end{align}
\begin{align}
\sum_{\beta=1}^{n}\sum_{i=1}^{n-1}a_{\beta}^{n}\partial_{x_i}(a_{\beta}^{i})=-\sum_{\beta=1}^{n}\langle\nabla_{J(e_{\beta})}^{L}(Je_{n}), e_{\beta}\rangle+\sum_{\beta=1}^{n}g^{M}\left(J(\frac{\partial}{\partial{x_{n}}}), \frac{\partial}{\partial{x_{n}}}\right)\langle\nabla_{J(e_{\beta})}^{L}(\frac{\partial}{\partial{x_{n}}}), e_{\beta}\rangle.
\end{align}
\end{lem}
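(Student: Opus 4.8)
The plan is to rewrite the two coordinate sums in terms of the Levi--Civita connection $\nabla^{L}$ and the tensor $J$, to evaluate the result at $x_{0}$ using the near-boundary data of Lemmas 3.4 and 3.5, and to reassemble everything into the stated intrinsic expressions; the second identity is then deduced from the first.

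First I would record the meaning of the coefficients $a^{i}_{\beta}$: since $J(dx_{i})=\sum_{h}a^{i}_{h}dx_{h}$ and $J$ is $g^{M}$-self-adjoint (it is an isometric involution), one has $J(\partial_{x_{\beta}})=\sum_{i}a^{i}_{\beta}\partial_{x_{i}}$ as vector fields, so $a^{i}_{\beta}$ is the $i$-th component of the vector field $J(\partial_{x_{\beta}})$; at $x_{0}$, where $g^{M}_{ij}(x_{0})=\delta_{ij}$ and $e_{i}(x_{0})=\partial_{x_{i}}(x_{0})$,
\begin{align}
a^{i}_{\beta}(x_{0})=g^{M}(J(e_{\beta}),e_{i})=a^{\beta}_{i}(x_{0}),\qquad \sum_{i=1}^{n-1}a^{i}_{\beta}e_{i}=J(e_{\beta})-a^{n}_{\beta}e_{n}\ \ \text{at }x_{0},\qquad a^{n}_{n}(x_{0})=g^{M}(J(\partial_{x_{n}}),\partial_{x_{n}}).
\end{align}
Next I would convert the coordinate derivative $\partial_{x_{i}}(a^{n}_{\beta})$ into covariant ones: applying the Leibniz rule for $\nabla^{L}$ to the pairing of $dx_{n}$ with $J(\partial_{x_{\beta}})$, using $\nabla^{L}_{\partial_{x_{i}}}dx_{n}=-\sum_{k}\Gamma^{n}_{ik}dx_{k}$ and $g^{M}(x_{0})=\texttt{id}$,
\begin{align}
\partial_{x_{i}}(a^{n}_{\beta})(x_{0})=g^{M}\big(e_{n},(\nabla^{L}_{e_{i}}J)e_{\beta}+J(\nabla^{L}_{\partial_{x_{i}}}\partial_{x_{\beta}})\big)-\sum_{k}\Gamma^{n}_{ik}a^{k}_{\beta},
\end{align}
where tensoriality of $\nabla^{L}_{\cdot}J$ was used to replace $(\nabla^{L}_{\partial_{x_{i}}}J)\partial_{x_{\beta}}(x_{0})$ by $(\nabla^{L}_{e_{i}}J)e_{\beta}(x_{0})$; all Christoffel symbols occurring ($\Gamma^{n}_{ik}(x_{0})$ and those inside $\nabla^{L}_{\partial_{x_{i}}}\partial_{x_{\beta}}(x_{0})$) are then read off from Lemma 3.5, each of the form $\pm\tfrac12 h'(0)$ on the three index patterns $\Gamma^{n}_{ii}$, $\Gamma^{i}_{in}$, $\Gamma^{i}_{ni}$, while any $\nabla^{L}e$-terms come from Lemma 3.4.

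Then I would contract with $a^{i}_{\beta}$ and sum over $1\le\beta\le n$, $1\le i\le n-1$. By the second relation above together with the linearity $\sum_{i}a^{i}_{\beta}\nabla^{L}_{e_{i}}J=\nabla^{L}_{J(e_{\beta})-a^{n}_{\beta}e_{n}}J$, the $(\nabla^{L}_{e_{i}}J)e_{\beta}$-part assembles into $\sum_{\beta}g^{M}\big(e_{n},(\nabla^{L}_{J(e_{\beta})}J)e_{\beta}\big)-\sum_{\beta}a^{n}_{\beta}g^{M}\big(e_{n},(\nabla^{L}_{e_{n}}J)e_{\beta}\big)$, and the remaining pieces -- the $J(\nabla^{L}\partial)$-terms, the $\Gamma^{n}_{ik}a^{k}_{\beta}$-terms and the cross terms $a^{i}_{\beta}\Gamma^{k}_{i\beta}(x_{0})a^{n}_{k}$ -- are to be shown, after substituting Lemmas 3.4--3.5, to collapse onto exactly $-g^{M}(J(\partial_{x_{n}}),\partial_{x_{n}})\sum_{\beta}g^{M}(\nabla^{L}_{J(e_{\beta})}(\partial_{x_{n}}),e_{\beta})$. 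Finally, $J$-self-adjointness and $\nabla^{L}_{J(e_{\beta})}(Je_{n})=(\nabla^{L}_{J(e_{\beta})}J)e_{n}+J(\nabla^{L}_{J(e_{\beta})}e_{n})$ recombine the first block into $\sum_{\beta}g^{M}(\nabla^{L}_{J(e_{\beta})}(Je_{n}),e_{\beta})$, which is the first identity. The second identity follows because the two left-hand sides add up to $\sum_{\beta}\sum_{i<n}\partial_{x_{i}}(a^{i}_{\beta}a^{n}_{\beta})$, which vanishes at $x_{0}$: in matrix form the isometry of $J$ reads $A^{\top}(g^{ij}_{M})A=(g^{ij}_{M})$ with $A=(a^{i}_{\beta})$, and differentiating this in a tangential direction at $x_{0}$, where the tangential first derivatives of the metric vanish, gives $\partial_{x_{i}}(A^{\top}A)(x_{0})=0$ for $i<n$; hence $\sum_{\beta}\sum_{i<n}\partial_{x_{i}}(a^{i}_{\beta}a^{n}_{\beta})(x_{0})=0$ and the second identity has the sign opposite to the first.

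The main obstacle is the bookkeeping in the third step: several distinct families of $h'(0)$-proportional correction terms arise (from $\Gamma^{n}_{ik}(x_{0})$, from the Christoffel part of $\nabla^{L}_{\partial_{x_{i}}}\partial_{x_{\beta}}$, from the contractions $a^{i}_{\beta}\Gamma^{k}_{i\beta}(x_{0})a^{n}_{k}$, and from the $\nabla^{L}e$-terms), and one must verify that they cancel against the $h'(0)$-content implicit in the invariant quantities $\sum_{\beta}g^{M}(\nabla^{L}_{J(e_{\beta})}(Je_{n}),e_{\beta})$ and $\sum_{\beta}g^{M}(\nabla^{L}_{J(e_{\beta})}(\partial_{x_{n}}),e_{\beta})$, leaving precisely the scalar factor $g^{M}(J(\partial_{x_{n}}),\partial_{x_{n}})$; grouping all sums by the three index patterns of Lemma 3.5 is what makes this cancellation manageable.
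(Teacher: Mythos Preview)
The paper does not actually prove this lemma: it is quoted verbatim from \cite{LW2} (Lemma 3.8 there) with the preface ``Applying Lemma 3.8 in \cite{LW2}, we see that'', and no argument is supplied. So there is no in-paper proof to compare your proposal against.

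As a standalone strategy your plan is sound and is essentially what any proof must do: interpret $a^{i}_{\beta}$ as the components of $J$ in the coordinate frame, replace $\partial_{x_i}(a^{n}_{\beta})$ by the $e_n$-component of $(\nabla^{L}_{\partial_i}J)\partial_\beta$ plus the explicit Christoffel corrections from Lemma~3.5, use the identity $\sum_{i<n}a^{i}_{\beta}e_i=J(e_\beta)-a^{n}_{\beta}e_n$ to reassemble the tangential sum into $\nabla^{L}_{J(e_\beta)}$, and invoke the self-adjointness of $J$ and of $\nabla^{L}J$ to pass between $g^{M}(e_n,(\nabla^{L}_{X}J)e_\beta)$ and $g^{M}((\nabla^{L}_{X}J)e_n,e_\beta)$. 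Your derivation of the second identity from the first is particularly clean: the observation that $\sum_{\beta}a^{i}_{\beta}a^{n}_{\beta}=(AA^{\!\top})_{in}$ together with $\partial_{x_i}G(x_0)=0$ for $i<n$ immediately forces the two left-hand sides to be negatives of each other, avoiding a second round of Christoffel bookkeeping.

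One caution on the ``main obstacle'' you flag: when you carry out the cancellation of the $h'(0)$-terms, do not forget that the term $-\sum_{\beta}a^{n}_{\beta}\,g^{M}(e_n,(\nabla^{L}_{e_n}J)e_\beta)$ produced by splitting off the $i=n$ piece equals $g^{M}((\nabla^{L}_{e_n}J)e_n,Je_n)$, which vanishes identically because $(\nabla^{L}_{X}J)J+J(\nabla^{L}_{X}J)=0$; this small algebraic fact is what makes the remaining $h'(0)$-bookkeeping close up.
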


We can now formulate our main results in this section.
\begin{thm}
Let $M$ be a $4$-dimensional almost product Riemannian spin manifold with the boundary $\partial M$ and the metric $g^M$ as above, $\widetilde{D}_{W}$ be $J$-Witten deformation on $M$, then
\begin{align}
\widetilde{{\rm Wres}}[\pi^+{\widetilde{D}_{W}}^{-1}\circ\pi^+{\widetilde{D}_{W}}^{-1}]=&8\pi^{2}\int_{M}\Big(-\frac{5}{3}s-4|V|^2-2\sum_{\nu,j=1}^{4}g^{M}(\nabla_{e_{j}}^{L}(J)e_{\nu}, (\nabla^{L}_{e_{\nu}}J)e_{j})\\
&-2\sum_{\nu,j=1}^{4}g^{M}(J(e_{\nu}), (\nabla^{L}_{e_{j}}(\nabla^{L}_{e_{\nu}}(J)))e_{j}-(\nabla^{L}_{\nabla^{L}_{e_{j}}e_{\nu}}(J))e_{j})\nonumber\\
&-\sum_{\alpha,\nu,j=1}^{4}g^{M}(J(e_{\alpha}), (\nabla^{L}_{e_{\nu}}J)e_{j})g^{M}((\nabla^{L}_{e_{\alpha}}J)e_{j}, J(e_{\nu}))\nonumber\\
&-\sum_{\alpha,\nu,j=1}^{4}g^{M}(J(e_{\alpha}), (\nabla^{L}_{e_{\alpha}}J)e_{j})g^{M}(J(e_{\nu}), (\nabla^{L}_{e_{\nu}}J)e_{j})\nonumber
\end{align}
\begin{align}
&+\sum_{\nu,j=1}^{4}g^{M}((\nabla^{L}_{e_{\nu}}J)e_{j}, (\nabla^{L}_{e_{\nu}}J)e_{j}))\Big)d{\rm Vol_{M} }\nonumber\\
&+\int_{\partial M}(-2\pi+8\pi^2)\Big(\sum_{\beta=1}^{4}\langle\nabla_{J(e_{\beta})}^{L}(Je_{4}), e_{\beta}\rangle\nonumber\\
&-\sum_{\beta=1}^{4}g^{M}\left(J(\frac{\partial}{\partial{x_{4}}}), \frac{\partial}{\partial{x_{4}}}\right)\langle\nabla_{J(e_{\beta})}^{L}(\frac{\partial}{\partial{x_{4}}}), e_{\beta}\rangle\Big)\Omega_3d{\rm Vol_{\partial M}},\nonumber
\end{align}
where $s$ is the scalar curvature.
\end{thm}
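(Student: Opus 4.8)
The plan is to evaluate the two pieces of the splitting
\[
\widetilde{{\rm Wres}}[\pi^+{\widetilde{D}_{W}}^{-1}\circ\pi^+{\widetilde{D}_{W}}^{-1}]=\int_M\int_{|\xi|=1}{\rm tr}_{\wedge^*T^*M}[\sigma_{-4}(\widetilde{D}_{W}^{-2})]\sigma(\xi)dx+\int_{\partial M}\Psi
\]
separately. For the interior term, $[\sigma_{-4}(\widetilde{D}_W^{-2})]|_M$ has the same expression as in the closed case, so I would simply quote Theorem 2.2 with $n=4$ — in particular ${\rm tr}_{\wedge^*T^*M}[\texttt{id}]=2^{4}=16$ — to identify this integral with the bulk term $8\pi^{2}\int_{M}\big(-\tfrac{5}{3}s-4|V|^{2}-\cdots\big)\,d{\rm Vol}_{M}$ appearing in the statement; no new computation is needed there.

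For the boundary term I would split $\Psi$ along the index constraint $r+l-k-j-|\alpha|-1=-4$ with $r,l\le -1$, which produces exactly the five cases (a)(I), (a)(II), (a)(III), (b), (c). Case (a) gives $\Psi_1+\Psi_2+\Psi_3$, obtained from $\sigma_{-1}(\widetilde{D}_W^{-1})=\tfrac{ic[J(\xi)]}{|\xi|^2}$ (Lemma 3.7) together with the boundary data of Lemmas 3.3--3.5, following the pattern of \cite{LW2}. Cases (b) and (c) give $\Psi_4$ and $\Psi_5$; for these one needs in addition $\sigma_{-2}(\widetilde{D}_W^{-1})$ and the explicit form of $\sigma_0(\widetilde{D}_W)(x_0)$, which by Lemma 3.4 collapses to the two Clifford polynomials $\widetilde b_0^1,\widetilde b_0^2$ together with $\overline c(V)$, the contour-integral formulas for $\pi^+_{\xi_n}$ of $\tfrac{1}{(1+\xi_n^2)^k}$ and $\tfrac{\xi_n}{(1+\xi_n^2)^k}$, and the Clifford trace identities for $c\,c\,\overline c\,\overline c\,c\,c$ and $c\,\overline c\,c\,c$. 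A first reduction is that the $\overline c(V)$-part and the $\widetilde b_0^1$-part of $\sigma_{-2}$ drop out under the trace, since ${\rm tr}[c(dx_l)\overline c(V)c(dx_\gamma)c(dx_\beta)]=0$ and the corresponding $\overline c\,\overline c\,c\,c$-trace sums vanish; only the $\widetilde b_0^2$-term, the $h'(0)$-pole term and the $\partial_{x_j}(a_h^p)$-terms survive in $\Psi_4$ and $\Psi_5$.

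The step I expect to be the main obstacle is the bookkeeping showing that the surviving long expressions for $\Psi_4$ and $\Psi_5$ are exact negatives of one another — each $h'(0)$-monomial in the $a_\bullet^\bullet$ and each $\partial_{x_j}(a_h^p)$-term occurs in $\Psi_4$ with some coefficient $\Omega_3 c$ and in $\Psi_5$ with $-\Omega_3 c$ — so that $\Psi_4+\Psi_5=0$ and the whole boundary contribution reduces to $\Psi=\Psi_1+\Psi_2+\Psi_3$. This leaves
\[
\Psi=\Big(\sum_{\beta,i}a_\beta^i\partial_{x_i}(a_\beta^n)\Big){\rm tr}[\texttt{id}]\,\Omega_3\Big(-\tfrac{\pi}{8}+\tfrac{\pi^2}{3}\Big)dx'+\Big(\sum_{\beta,i}a_\beta^n\partial_{x_i}(a_\beta^i)\Big){\rm tr}[\texttt{id}]\,\Omega_3\Big(-\tfrac{\pi^2}{6}\Big)dx'.
\]
To finish I would invoke Lemma 3.8 to rewrite
\[
\sum_{\beta,i}a_\beta^i\partial_{x_i}(a_\beta^n)=-\sum_{\beta,i}a_\beta^n\partial_{x_i}(a_\beta^i)=\sum_{\beta=1}^{4}\langle\nabla^L_{J(e_\beta)}(Je_4),e_\beta\rangle-\sum_{\beta=1}^{4}g^M\!\Big(J(\tfrac{\partial}{\partial x_4}),\tfrac{\partial}{\partial x_4}\Big)\langle\nabla^L_{J(e_\beta)}(\tfrac{\partial}{\partial x_4}),e_\beta\rangle,
\]
substitute ${\rm tr}[\texttt{id}]=16$, and collect the numerical coefficients, $16\,\Omega_3\big(-\tfrac{\pi}{8}+\tfrac{\pi^2}{3}+\tfrac{\pi^2}{6}\big)=(-2\pi+8\pi^2)\,\Omega_3$, so that $\int_{\partial M}\Psi$ equals the stated boundary integral; adding the interior term proves the theorem. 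Conceptually nothing beyond Theorem 2.2 and Lemmas 3.3--3.8 is involved — the difficulty is entirely in the length of the trace-and-residue computation in $\Psi_4,\Psi_5$ and in verifying the pairwise cancellation.
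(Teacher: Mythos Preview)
Your proposal is correct and follows essentially the same route as the paper: the interior term is read off from Theorem~2.2, the boundary term is split into the five cases (a)(I)--(III), (b), (c), the $\overline c(V)$- and $\widetilde b_0^1$-contributions to $\Psi_4,\Psi_5$ are killed by the Clifford trace identities, and the remaining pieces of $\Psi_4$ and $\Psi_5$ cancel termwise so that $\Psi=\Psi_1+\Psi_2+\Psi_3$, after which Lemma~3.8 and ${\rm tr}[\texttt{id}]=16$ give the stated boundary integrand. The only difference is cosmetic: the paper displays the full expressions for $\Psi_4$ and $\Psi_5$ and then sums all five $\Psi_i$, whereas you phrase the key step as the cancellation $\Psi_4+\Psi_5=0$ up front --- but the underlying computation is identical.
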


\section{ The Kastler-Kalau-Walze type theorem for $6$-dimensional manifolds with boundary }

In this section, we develop the Kastler-Kalau-Walze type theorem associated with $J$-Witten deformation to six-dimensional almost product Riemannian spin manifold with (respectively without) boundary.
From (3.9), we know that
\begin{equation}
\widetilde{{\rm Wres}}[\pi^+{\widetilde{D}_{W}}^{-1}\circ\pi^+{\widetilde{D}_{W}}^{-3}]=\int_M\int_{|\xi|=1}{\rm tr}_{\wedge^*T^*M}[\sigma_{-6}({\widetilde{D}_{W}}^{-4})]\sigma(\xi)dx+\int_{\partial M}\overline{\Psi},
\end{equation}
where $\widetilde{{\rm Wres}}$ denote noncommutative residue on minifolds with boundary,
\begin{align}
\overline{\Psi}&=\int_{|\xi'|=1}\int^{+\infty}_{-\infty}\sum^{\infty}_{j, k=0}\sum\frac{(-i)^{|\alpha|+j+k+1}}{\alpha!(j+k+1)!}
\times {\rm tr}_{{\wedge^*T^*M}}[\partial^j_{x_n}\partial^\alpha_{\xi'}\partial^k_{\xi_n}\sigma^+_{r}({\widetilde{D}_{W}}^{-1})(x',0,\xi',\xi_n)\\
&\times\partial^\alpha_{x'}\partial^{j+1}_{\xi_n}\partial^k_{x_n}\sigma_{l}({\widetilde{D}_{W}}^{-3})(x',0,\xi',\xi_n)]d\xi_n\sigma(\xi')dx'\nonumber
\end{align}
and the sum is taken over $r+\ell-k-j-|\alpha|-1=-6, \ r\leq-1, \ell\leq -3$.

By Theorem 2.2, we compute the interior term of (4.1)
\begin{align}
&\int_M\int_{|\xi|=1}{\rm tr}_{\wedge^*T^*M}[\sigma_{-6}({\widetilde{D}_{W}}^{-4})]\sigma(\xi)dx=32\pi^{3}\\
&\int_{M}\Big(-\frac{5}{3}s-4|V|^2
-2\sum_{\nu,j=1}^{6}g^{M}(\nabla_{e_{j}}^{L}(J)e_{\nu}, (\nabla^{L}_{e_{\nu}}J)e_{j})\nonumber\\
&-2\sum_{\nu,j=1}^{6}g^{M}(J(e_{\nu}), (\nabla^{L}_{e_{j}}(\nabla^{L}_{e_{\nu}}(J)))e_{j}-(\nabla^{L}_{\nabla^{L}_{e_{j}}e_{\nu}}(J))e_{j})\nonumber\\
&-\sum_{\alpha,\nu,j=1}^{6}g^{M}(J(e_{\alpha}), (\nabla^{L}_{e_{\nu}}J)e_{j})g^{M}((\nabla^{L}_{e_{\alpha}}J)e_{j}, J(e_{\nu}))\nonumber
\end{align}
\begin{align}
&-\sum_{\alpha,\nu,j=1}^{6}g^{M}(J(e_{\alpha}), (\nabla^{L}_{e_{\alpha}}J)e_{j})g^{M}(J(e_{\nu}), (\nabla^{L}_{e_{\nu}}J)e_{j})\nonumber\\
&+\sum_{\nu,j=1}^{6}g^{M}((\nabla^{L}_{e_{\nu}}J)e_{j}, (\nabla^{L}_{e_{\nu}}J)e_{j})\Big)d{\rm Vol_{M} }.\nonumber
\end{align}

So we only need to compute $\int_{\partial M} \overline{\Psi}$. Let us now turn to compute the specification of ${\widetilde{D}_{W}}^3$.
\begin{align}
{\widetilde{D}_{W}}^{3}&=\sum^{n}_{i=1}c[J(e_{i})]\langle e_{i},dx_{l}\rangle(-g^{ij}\partial_{l}\partial_{i}\partial_{j})+\sum^{n}_{i=1}c[J(e_{i})]\langle e_{i},dx_{l}\rangle\Big(-(\partial_{l}g^{ij})\partial_{i}\partial_{j}-g^{ij}(4\sigma_{i}\partial_{j}+4a_{i}\partial_{j}\nonumber\\
&-2\Gamma^{k}_{ij}\partial_{k})\partial_{l}+2\sum^{n}_{\alpha,\beta,\gamma=1}c[J(e_{\alpha})]c[(\nabla^{L}_{e_{\alpha}}J)e_{\beta}]\langle e_{\beta}, dx_{\gamma}\rangle\partial_{l}\partial_{\gamma}\Big)+\Big(\sum^{n}_{i=1}c[J(e_{i})](\sigma_{i}+a_{i})+\overline{c}(V)\Big)\nonumber\\
&\times(-g^{ij}\partial_{i}\partial_{j})+\sum^{n}_{i=1}c[J(e_{i})]\langle e_{i},dx_{l}\rangle\Big[-2(\partial_{l}g^{ij})\sigma_{i}\partial_{j}-2g^{ij}(\partial_{l}\sigma_{i})\partial_{j}-2(\partial_{l}g^{ij})a_{i}\partial_{j}\nonumber\\
&-2g^{ij}(\partial_{l}a_{i})\partial_{j}+(\partial_{l}g^{ij})\Gamma^{k}_{ij}\partial_{k}+g^{ij}(\partial_{l}\Gamma^{k}_{ij})\partial_{k}+\sum^{n}_{\alpha,\beta,\gamma=1}\partial_{l}\Big(c[J(e_{\alpha})]c[(\nabla^{L}_{e_{\alpha}}J)e_{\beta}]\Big)\langle e_{\beta}, dx_{\gamma}\rangle\partial_{\gamma}\nonumber\\
&+\sum^{n}_{\alpha,\beta,\gamma=1}c[J(e_{\alpha})]c[(\nabla^{L}_{e_{\alpha}}J)e_{\beta}]\Big(\partial_{l}\langle e_{\beta}, dx_{\gamma}\rangle\Big)\partial_{\gamma}\Big]+\sum^{n}_{i=1}c[J(e_{i})]\langle e_{i},dx_{l}\rangle\partial_{l}\Big(-g^{ij}((\partial_{i}\sigma_{j})\nonumber\\
&+(\partial_{i}a_{j})+\sigma_{i}\sigma_{j}+\sigma_{i}a_{j}+a_{i}\sigma_{j}+a_{i}a_{j}-\Gamma^{k}_{ij}\sigma_{k}-\Gamma^{k}_{ij}a_{k})+\sum^{n}_{\alpha,\beta,\gamma=1}c[J(e_{\alpha})]c[(\nabla^{L}_{e_{\alpha}}J)e_{\beta}]\langle e_{\beta}, dx_{\gamma}\rangle\nonumber\\
&\times(\sigma_{\gamma}+a_{\gamma})-\frac{1}{8}\sum_{i,j,k,l=1}^{n}R(J(e_{i}), J(e_{j}), e_{k}, e_{l})\overline{c}(e_{i})\overline{c}(e_{j})c(e_{k})c(e_{l})+\frac{1}{4}s+\sum^{n}_{i=1}c[J(e_{i})]\overline{c}(\nabla^{L}_{e_{i}}V)\nonumber\\
&+|V|^{2}\Big)+\Big(\sum^{n}_{i=1}c[J(e_{i})](\sigma_{i}+a_{i})+\overline{c}(V)\Big)\Big(\frac{1}{4}s-2\sigma^{j}\partial_{j}-2a^{j}\partial_{j}+\Gamma^{k}\partial_{k}-g^{ij}((\partial_{i}\sigma_{j})+(\partial_{i}a_{j})\nonumber\\
&+\sigma_{i}\sigma_{j}+\sigma_{i}a_{j}+a_{i}\sigma_{j}+a_{i}a_{j}-\Gamma^{k}_{ij}\sigma_{k}-\Gamma^{k}_{ij}a_{k})+\sum^{n}_{\alpha,\beta,\gamma=1}c[J(e_{\alpha})]c[(\nabla^{L}_{e_{\alpha}}J)e_{\beta}]\langle e_{\beta}, dx_{\gamma}\rangle(\partial_{\gamma}+\sigma_{\gamma}\nonumber\\
&+a_{\gamma})-\frac{1}{8}\sum_{i,j,k,l=1}^{n}R(J(e_{i}), J(e_{j}), e_{k}, e_{l})\overline{c}(e_{i})\overline{c}(e_{j})c(e_{k})c(e_{l})+\sum^{n}_{i=1}c[J(e_{i})]\overline{c}(\nabla^{L}_{e_{i}}V)+|V|^{2}\Big).\nonumber
\end{align}
Then, we obtain
\begin{lem} The following identities hold:
\begin{align}
\sigma_3({\widetilde{D}_{W}}^{3})&=ic[J(\xi)]|\xi|^{2};\\
\sigma_2({\widetilde{D}_{W}}^{3})&=\sum^{n}_{i,j,l=1}c[J(dx_{l})]\partial_{l}(g^{ij})\xi_{i}\xi_{j}+c[J(\xi)](4\sigma^k+4a^k-2\Gamma^k)\xi_{k}\\
&-2\sum^{n}_{\alpha=1}c[J(\xi)]c[J(e_{\alpha})]c[(\nabla^{L}_{e_{\alpha}}J)(\xi^{*})]+\frac{1}{4}|\xi|^2\sum^{n}_{s,t,l=1}\omega_{s,t}(e_{l})c[J(e_{l})]\overline{c}(e_{s})\overline{c}(e_{t})\nonumber\\
&-\frac{1}{4}|\xi|^2\sum^{n}_{s,t,l=1}\omega_{s,t}(e_{l})c[J(e_{l})]c(e_{s})c(e_{t})+|\xi|^2\overline{c}(V),\nonumber
\end{align}
where $\xi^{*}=\sum^{n}_{\beta=1}\langle e_{\beta}, \xi\rangle e_{\beta}.$
\end{lem}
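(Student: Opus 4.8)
The plan is to read off the two leading homogeneous parts of the total symbol of $\widetilde{D}_W^3$ directly from the explicit local expression (4.5). The sign convention is the one already fixed by $\sigma_1(\widetilde{D}_W)=ic[J(\xi)]$: a differential operator $\sum_{|\alpha|\le 3}A_\alpha(x)\partial_x^\alpha$ has total symbol $\sum_{|\alpha|\le 3}A_\alpha(x)(i\xi)^\alpha$, so the product $\partial_l\partial_i\partial_j$ contributes $-i\,\xi_l\xi_i\xi_j$ to the degree-$3$ part and $\partial_j\partial_l$ contributes $-\xi_j\xi_l$ to the degree-$2$ part, while the summands of (4.5) carrying one or no derivative contribute only to the lower-order parts of $\sigma(\widetilde{D}_W^3)$ and are irrelevant here.

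First I would handle $\sigma_3$. The only three-derivative summand of (4.5) is $\sum_i c[J(e_i)]\langle e_i,dx_l\rangle(-g^{ij}\partial_l\partial_i\partial_j)$. Applying the symbol map and then the two contraction identities $\sum_i c[J(e_i)]\langle e_i,\xi\rangle=c[J(\xi)]$ (valid because $\xi^{*}=\sum_i\langle e_i,\xi\rangle e_i$) and $\sum_{i,j}g^{ij}\xi_i\xi_j=|\xi|^2$ gives $\sigma_3(\widetilde{D}_W^3)=ic[J(\xi)]|\xi|^2$. As a sanity check this agrees with $\sigma_3(\widetilde{D}_W^3)=\big(\sigma_1(\widetilde{D}_W)\big)^3=\big(ic[J(\xi)]\big)^3=i|\xi|^2c[J(\xi)]$, where one uses $\big(c[J(\xi)]\big)^2=-|J(\xi)|^2=-|\xi|^2$ since $J$ is an isometry by (2.8).

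Next I would collect all two-derivative summands of (4.5), apply the symbol map, and simplify as above; there are four contributions. (a) The term $-(\partial_l g^{ij})\partial_i\partial_j$, arising when the outer derivative of $\widetilde{D}_W$ lands on the metric, becomes $\sum_l c[J(dx_l)]\partial_l(g^{ij})\xi_i\xi_j$. (b) The terms $-g^{ij}(4\sigma_i+4a_i)\partial_j\partial_l+2g^{ij}\Gamma^k_{ij}\partial_k\partial_l$, after using the definitions of $\sigma^k,a^k,\Gamma^k$ and contracting the remaining free index against $\xi$, become $c[J(\xi)](4\sigma^k+4a^k-2\Gamma^k)\xi_k$. (c) The term $2\sum_{\alpha,\beta,\gamma}c[J(e_\alpha)]c[(\nabla^{L}_{e_\alpha}J)e_\beta]\langle e_\beta,dx_\gamma\rangle\partial_l\partial_\gamma$, where the inner identity $\sum_{\beta,\gamma}c[(\nabla^{L}_{e_\alpha}J)e_\beta]\langle e_\beta,dx_\gamma\rangle\xi_\gamma=c[(\nabla^{L}_{e_\alpha}J)(\xi^{*})]$ with $\xi^{*}=\sum_\beta\langle e_\beta,\xi\rangle e_\beta$ produces $-2c[J(\xi)]\sum_\alpha c[J(e_\alpha)]c[(\nabla^{L}_{e_\alpha}J)(\xi^{*})]$. (d) The term $\big(\sum_i c[J(e_i)](\sigma_i+a_i)+\overline{c}(V)\big)(-g^{ij}\partial_i\partial_j)$, whose symbol $|\xi|^2\big(\sum_i c[J(e_i)](\sigma_i+a_i)+\overline{c}(V)\big)$ equals $|\xi|^2\sigma_0(\widetilde{D}_W)$ by the definitions of $\sigma_i,a_i$, giving the last three terms $\tfrac14|\xi|^2\sum_{s,t,l}\omega_{s,t}(e_l)c[J(e_l)]\overline{c}(e_s)\overline{c}(e_t)-\tfrac14|\xi|^2\sum_{s,t,l}\omega_{s,t}(e_l)c[J(e_l)]c(e_s)c(e_t)+|\xi|^2\overline{c}(V)$. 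Adding (a)--(d) yields the asserted formula for $\sigma_2(\widetilde{D}_W^3)$.

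The main obstacle will be purely organizational rather than analytic: expression (4.5) is long, so one must separate its summands by derivative order without slips, transcribe the numerical coefficients ($4$, $2$, $\tfrac14$) faithfully, and — since the matrix coefficients $c[J(\xi)],\sigma^k,a^k,c[J(e_\alpha)],\overline{c}(V)$ do not commute — keep each factor in the left-to-right position in which it occurs in (4.5), so that $c[J(\xi)]$ ends up on the far left in the terms (b) and (c). Beyond the two contraction identities and the Section 2 definitions, no further input is required.
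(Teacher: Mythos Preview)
Your proposal is correct and follows exactly the paper's approach: the paper computes $\widetilde{D}_W^3$ in the large display immediately preceding the lemma and then simply writes ``Then, we obtain'' before stating the result, and you are doing precisely this extraction with the details spelled out (grouping the two-derivative terms into (a)--(d) and applying the contractions $\sum_i c[J(e_i)]\langle e_i,\xi\rangle=c[J(\xi)]$, $g^{ij}\xi_i\xi_j=|\xi|^2$). The only cosmetic slip is the label: the explicit expression for $\widetilde{D}_W^3$ you are reading from is the unnumbered display just before Lemma~4.1, not equation~(4.5), which is the statement of $\sigma_2$ itself.
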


Suppose that
\begin{eqnarray}
\sigma({\widetilde{D}_{W}}^3)=p_3+p_2+p_1+p_0;
~\sigma({\widetilde{D}_{W}}^{-3})=\sum^{\infty}_{j=3}q_{-j}.
\end{eqnarray}
Then
\begin{align}
1=\sigma({\widetilde{D}_{W}}^3\circ {\widetilde{D}_{W}}^{-3})&=
\sum_{\alpha}\frac{1}{\alpha!}\partial^{\alpha}_{\xi}
[\sigma({\widetilde{D}_{W}}^3)]{{D}}^{\alpha}_{x}
[\sigma({\widetilde{D}_{W}}^{-3})] \\
&=(p_3+p_2+p_1+p_0)(q_{-3}+q_{-4}+q_{-5}+\cdots) \nonumber\\
&+\sum_j(\partial_{\xi_j}p_3+\partial_{\xi_j}p_2+\partial_{\xi_j}p_1+\partial_{\xi_j}p_0)
(D_{x_j}q_{-3}+D_{x_j}q_{-4}+D_{x_j}q_{-5}+\cdots) \nonumber\\
&=p_3q_{-3}+(p_3q_{-4}+p_2q_{-3}+\sum_j\partial_{\xi_j}p_3D_{x_j}q_{-3})+\cdots,\nonumber
\end{align}
and consequently
\begin{equation}
q_{-3}=p_3^{-1};~q_{-4}=-p_3^{-1}[p_2p_3^{-1}+\sum_j\partial_{\xi_j}p_3D_{x_j}(p_3^{-1})].
\end{equation}

\begin{lem} The following identities hold:
\begin{align}
\sigma_{-3}({\widetilde{D}_{W}}^{-3})&=\frac{ic[J(\xi)]}{|\xi|^{4}};\\
\sigma_{-4}({\widetilde{D}_{W}}^{-3})&=\frac{c[J(\xi)]\sigma_{2}({\widetilde{D}_{W}}^{3})c[J(\xi)]}{|\xi|^{8}}+\frac{c[J(\xi)]}{|\xi|^{10}}\sum_ {j=1}^{n} \Big(c[J(dx_j)]|\xi|^{2}+2\xi_{j}c[J(\xi)]\Big)\Big[\partial_{x_j}(c[J(\xi)])|\xi|^2\\
&-2c[J(\xi)]\partial_{x_j}(|\xi|^2)\Big].\nonumber
\end{align}
\end{lem}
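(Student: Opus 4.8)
The plan is to run the symbol recursion already set up for ${\widetilde{D}_{W}}^{-3}$: from equation (4.10) we have $q_{-3}=p_3^{-1}$ and $q_{-4}=-p_3^{-1}\bigl[p_2 p_3^{-1}+\sum_j\partial_{\xi_j}p_3\,D_{x_j}(p_3^{-1})\bigr]$, with $p_3=\sigma_3({\widetilde{D}_{W}}^{3})=ic[J(\xi)]|\xi|^2$ and $p_2=\sigma_2({\widetilde{D}_{W}}^{3})$ supplied by Lemma 4.1 (equations (4.6) and (4.7)). The whole computation is parallel to the derivation of the $\sigma_{-1},\sigma_{-2}$ symbols of ${\widetilde{D}_{W}}^{-1}$ in Lemma 3.7; the only new feature is the extra factor $|\xi|^2$ in $p_3$, which shifts the powers of $|\xi|$ in the denominators and introduces a $\partial_{\xi_j}(|\xi|^2)$ contribution.

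The key algebraic fact is that $J$ is an isometry (equation (2.8)), so together with $c(X)c(Y)+c(Y)c(X)=-2g^M(X,Y)$ one gets $c[J(\xi)]c[J(\xi)]=-|J(\xi)|^2=-|\xi|^2$. Hence $\bigl(ic[J(\xi)]|\xi|^2\bigr)\cdot\dfrac{ic[J(\xi)]}{|\xi|^4}=-\,c[J(\xi)]c[J(\xi)]\,\dfrac{|\xi|^2}{|\xi|^4}=1$, which yields $\sigma_{-3}({\widetilde{D}_{W}}^{-3})=q_{-3}=p_3^{-1}=\dfrac{ic[J(\xi)]}{|\xi|^4}$.

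For $q_{-4}$ I would compute the two ingredients separately. Writing $c[J(\xi)]=\sum_i\xi_i c[J(dx_i)]$ gives $\partial_{\xi_j}\bigl(c[J(\xi)]\bigr)=c[J(dx_j)]$ and $\partial_{\xi_j}|\xi|^2=2\xi^j$, whence $\partial_{\xi_j}p_3=i\bigl(c[J(dx_j)]|\xi|^2+2\xi_j c[J(\xi)]\bigr)$ (at $x_0$ in the chosen normal coordinates $\xi^j=\xi_j$). Since $D_{x_j}=-i\partial_{x_j}$,
\[
D_{x_j}(p_3^{-1})=\partial_{x_j}\!\left(\frac{c[J(\xi)]}{|\xi|^4}\right)=\frac{\partial_{x_j}(c[J(\xi)])|\xi|^2-2c[J(\xi)]\partial_{x_j}(|\xi|^2)}{|\xi|^6}.
\]
Substituting $p_3^{-1}=\dfrac{ic[J(\xi)]}{|\xi|^4}$ on the outside and in the first summand of (4.10), the two factors of $i$ cancel and the first term becomes $\dfrac{c[J(\xi)]\,\sigma_2({\widetilde{D}_{W}}^{3})\,c[J(\xi)]}{|\xi|^8}$; in the second term $-p_3^{-1}\sum_j\partial_{\xi_j}p_3\,D_{x_j}(p_3^{-1})$ the factors of $i$ again cancel and one collects $\dfrac{c[J(\xi)]}{|\xi|^{10}}\sum_{j}\bigl(c[J(dx_j)]|\xi|^2+2\xi_j c[J(\xi)]\bigr)\bigl[\partial_{x_j}(c[J(\xi)])|\xi|^2-2c[J(\xi)]\partial_{x_j}(|\xi|^2)\bigr]$. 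Adding the two gives exactly the stated expression for $\sigma_{-4}({\widetilde{D}_{W}}^{-3})$.

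Every step here is an explicit finite manipulation, so there is no substantive obstacle; the only thing that needs care is the bookkeeping of the powers of $|\xi|$ generated by cubing (as opposed to the first–order case treated in Lemma 3.7) and the consistent use of $c[J(\xi)]c[J(\xi)]=-|\xi|^2$ so that all signs and factors of $i$ collapse correctly.
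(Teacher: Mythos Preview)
Your proposal is correct and follows exactly the route the paper itself takes: the paper sets up the recursion (4.8)--(4.10), records $q_{-3}=p_3^{-1}$ and $q_{-4}=-p_3^{-1}\bigl[p_2p_3^{-1}+\sum_j\partial_{\xi_j}p_3\,D_{x_j}(p_3^{-1})\bigr]$, and then states Lemma~4.2 without further detail, so your explicit computation of $p_3^{-1}$, $\partial_{\xi_j}p_3$, and $D_{x_j}(p_3^{-1})$ using $c[J(\xi)]^2=-|\xi|^2$ is precisely what the paper leaves to the reader. Your remark that $\partial_{\xi_j}|\xi|^2=2\xi^j$ and that the paper's $2\xi_j$ is the value in normal coordinates at $x_0$ is accurate and worth keeping, since all subsequent boundary computations are carried out at $x_0$ anyway.
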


When $n=6$, then ${\rm tr}_{\wedge^*T^*M}[{\rm \texttt{id}}]=64.$
Since the sum is taken over $r+\ell-k-j-|\alpha|-1=-6, \ r\leq-1, \ell\leq -3$, then we have the $\int_{{\partial}{M}}\overline{\Psi}$ is the sum of the following five cases:\\

{\bf case (a)~(I)}~$r=-1, l=-3, j=k=0, |\alpha|=1$\\

By (4.2), we compute that
\begin{equation}
\overline{\Psi}_1=-\int_{|\xi'|=1}\int^{+\infty}_{-\infty}\sum_{|\alpha|=1}{\rm tr}_{{\wedge^*T^*M}}
[\partial^{\alpha}_{\xi'}\pi^{+}_{\xi_{n}}\sigma_{-1}({\widetilde{D}_{W}}^{-1})\times\partial^{\alpha}_{x'}\partial_{\xi_{n}}\sigma_{-3}({\widetilde{D}_{W}}^{-3})](x_0)d\xi_n\sigma(\xi')dx'.
\end{equation}

{\bf case (a)~(II)}~$r=-1, l=-3, |\alpha|=k=0, j=1$\\

It is easy to check that
\begin{equation}
\overline{\Psi}_2=-\frac{1}{2}\int_{|\xi'|=1}\int^{+\infty}_{-\infty} {\rm tr}_{{\wedge^*T^*M}}[\partial_{x_{n}}\pi^{+}_{\xi_{n}}\sigma_{-1}({\widetilde{D}_{W}}^{-1})\times\partial^{2}_{\xi_{n}}\sigma_{-3}({\widetilde{D}_{W}}^{-3})](x_0)d\xi_n\sigma(\xi')dx'.
\end{equation}

{\bf case (a)~(III)}~$r=-1,l=-3,|\alpha|=j=0,k=1$\\

We notice that
\begin{align}
\overline{\Psi}_3=-\frac{1}{2}\int_{|\xi'|=1}\int^{+\infty}_{-\infty}{\rm tr}_{{\wedge^*T^*M}}[\partial_{\xi_{n}}\pi^{+}_{\xi_{n}}\sigma_{-1}({\widetilde{D}_{W}}^{-1})
      \times\partial_{\xi_{n}}\partial_{x_{n}}\sigma_{-3}({\widetilde{D}_{W}}^{-3})](x_0)d\xi_n\sigma(\xi')dx'.
\end{align}

By (3.31)-(3.66) in \cite{LW3}, we obtain
\begin{align}
\overline{\Psi}_1+\overline{\Psi}_2+\overline{\Psi}_3&=
\sum_{l,j=1}^{n}a_{l}^{j}\partial_{x_j}(a_{l}^{n}){\rm tr}[\texttt{id}]\Omega_4(-\frac{\pi}{16}+\frac{\pi^3}{6})dx'\nonumber\\
&+\sum_{l=1}^{n}\sum_{i=1}^{n-1}(a_{l}^{i})^2{\rm tr}[\texttt{id}]\Omega_4h'(0)(\frac{7 \pi ^3}{240})dx'\nonumber\\
&+\sum_{l=1}^{n}(a_{l}^{n})^2{\rm tr}[\texttt{id}]\Omega_4h'(0)(\frac{3 \pi }{128})dx',\nonumber
\end{align}
where ${\rm \Omega_{4}}$ is the canonical volume of $S^{4}.$\\

{\bf case (b)}~$r=-1,l=-4,|\alpha|=j=k=0$\\

Using (4.2), we get
\begin{align}
\overline{\Psi}_4&=-i\int_{|\xi'|=1}\int^{+\infty}_{-\infty}{\rm tr}_{{\wedge^*T^*M}}[\pi^{+}_{\xi_{n}}\sigma_{-1}({\widetilde{D}_{W}}^{-1})
      \times\partial_{\xi_{n}}\sigma_{-4}({\widetilde{D}_{W}}^{-3})](x_0)d\xi_n\sigma(\xi')dx'\\
&=i\int_{|\xi'|=1}\int^{+\infty}_{-\infty}{\rm tr}_{{\wedge^*T^*M}} [\partial_{\xi_n}\pi^+_{\xi_n}\sigma_{-1}({\widetilde{D}_{W}}^{-1})\times
\sigma_{-4}({\widetilde{D}_{W}}^{-3})](x_0)d\xi_n\sigma(\xi')dx'.\nonumber
\end{align}

We can assert that
\begin{align}
\pi^+_{\xi_n}\partial_{\xi_n}\left(\frac{ic[J(\xi)]}{|\xi|^2}\right)(x_0)|_{|\xi'|=1}
&=-\frac{1}{2(\xi_{n}-i)^2}\sum^{n}_{l=1}\sum^{n-1}_{i=1}\xi_{i}a_{l}^ic(dx_{l})-\frac{i}{2(\xi_{n}-i)^2}\sum^{n}_{l=1}a_{l}^nc(dx_{l}).
\end{align}
For simplicity of notation, we write
\begin{align}
\widetilde{B}_1(x_0)&=-\frac{1}{2(\xi_{n}-i)^2}\sum^{n}_{l=1}\sum^{n-1}_{i=1}\xi_{i}a_{l}^ic(dx_{l});\\
\widetilde{B}_2(x_0)&=-\frac{i}{2(\xi_{n}-i)^2}\sum^{n}_{l=1}a_{l}^nc(dx_{l}).
\end{align}

By computations, we have
\begin{align}
\sigma_{-4}({{D}_{J}}^{-3})(x_{0})|_{|\xi'|=1}&=\frac{1}{(1+\xi_{n}^{2})^{3}}h'(0)\sum_{\eta, \Gamma, \Omega, \Lambda, \Pi=1}^{n}\xi_{\Gamma}\xi_{\Omega}a_{\Lambda}^{\Gamma}a_{\eta}^{n}a_{\Pi}^{\Omega}c(dx_{\Lambda})c(dx_{\eta})c(dx_{\Pi})\\
&-\frac{1}{(1+\xi_{n}^{2})^{3}}h'(0)\sum_{\chi,\tau=1}^{n}\sum_{\gamma=1}^{n-1}\xi_{\gamma}\xi_{\chi}a_{\tau}^{\chi}c(dx_{\gamma})c(dx_{n})c(dx_{\tau})\nonumber\\
&+\frac{1}{(1+\xi_{n}^{2})^{3}}h'(0)\sum_{\chi,\tau=1}^{n}\sum_{\gamma=1}^{n-1}\xi_{\gamma}\xi_{\chi}a_{\tau}^{\chi}\overline{c}(dx_{\gamma})\overline{c}(dx_{n})c(dx_{\tau})\nonumber\\
&+\frac{5}{(1+\xi_{n}^{2})^{3}}h'(0)\sum_{\rho,\theta=1}^{n}\xi_{n}\xi_{\rho}a_{\theta}^{\rho}c(dx_{\theta})\nonumber\\
&+\frac{2}{(1+\xi_{n}^{2})^{3}}\sum_{\alpha,\beta,\lambda,\omega=1}^{n}\xi_{\lambda}a_{\alpha}^{\beta}a_{\omega}^{\lambda}c(dx_{\beta})c[(\nabla^{L}_{e_{\alpha}}J)(\xi^{*})]c(dx_{\omega})\nonumber\\
&-\frac{1}{4(1+\xi_{n}^{2})^{3}}h'(0)\sum_{\eta, \Phi, b, \Psi, c=1}^{n}\sum_{\nu=1}^{n-1}\xi_{\Phi}\xi_{b}a_{\nu}^{\eta}a_{\Psi}^{\Phi}a_{c}^{b}c(dx_{\Psi})c(dx_{\eta})c(dx_{n})c(dx_{\nu})c(dx_{c})\nonumber\\
&+\frac{1}{4(1+\xi_{n}^{2})^{3}}h'(0)\sum_{\eta, \Phi, b, \Psi, c=1}^{n}\sum_{\nu=1}^{n-1}\xi_{\Phi}\xi_{b}a_{\nu}^{\eta}a_{\Psi}^{\Phi}a_{c}^{b}c(dx_{\Psi})c(dx_{\eta})\overline{c}(dx_{n})\overline{c}(dx_{\nu})c(dx_{c})\nonumber\\
&+\frac{1}{(1+\xi_{n}^{2})^{3}}\sum_{j,p,h,\delta,\varepsilon,q=1}^{n}\xi_{p}\xi_{\delta}a_{\varepsilon}^{\delta}a_{q}^{j}\partial_{x_{j}}(a_{h}^{p})c(dx_{\varepsilon})c(dx_{q})c(dx_{h})\nonumber\\
&-\frac{2}{(1+\xi_{n}^{2})^{3}}\sum_{j,p,h=1}^{n}\xi_{j}\xi_{p}\partial_{x_{j}}(a_{h}^{p})c(dx_{h})\nonumber
\end{align}
\begin{align}
&+\frac{1}{(1+\xi_{n}^{2})^{3}}\sum_{p,\kappa,o,e=1}^{n}\sum_{h=1}^{n-1}\xi_{p}\xi_{\kappa}a_{h}^{p}a_{o}^{\kappa}a_{e}^{n}c(dx_{o})c(dx_{e})\partial_{x_{n}}(c(dx_{h}))\nonumber\\
&-\frac{2}{(1+\xi_{n}^{2})^{3}}\sum_{p=1}^{n}\sum_{h=1}^{n-1}\xi_{n}\xi_{p}a_{h}^{p}\partial_{x_{n}}(c(dx_{h}))\nonumber\\
&-\frac{2}{(1+\xi_{n}^{2})^{3}}h'(0)\sum_{d, f, e, m, g=1}^{n}\xi_{d}\xi_{f}a_{e}^{d}a_{m}^{n}a_{g}^{f}c(dx_{e})c(dx_{m})c(dx_{g})\nonumber\\
&+\frac{4}{(1+\xi_{n}^{2})^{4}}h'(0)\sum_{\psi,\varphi=1}^{n}\xi_{n}\xi_{\psi}a_{\varphi}^{\psi}c(dx_{\varphi})\nonumber\\
&+\frac{1}{(1+\xi_{n}^{2})^{2}}\overline{c}(V).\nonumber
\end{align}

We note that $\int_{|\xi'|=1}{\{\xi_{i_1}\cdot\cdot\cdot\xi_{i_{2d+1}}}\}\sigma(\xi')=0,$ this gives
\begin{align}
i\int_{|\xi'|=1}\int^{+\infty}_{-\infty}{\rm tr}_{{\wedge^*T^*M}} [\widetilde{B}_1(x_0)\times
\sigma_{-4}({\widetilde{D}_{W}}^{-3})(x_0)]d\xi_n\sigma(\xi')dx'=0.\nonumber
\end{align}

Observing (2.32) and (3.42), we have
\begin{align}
&\sum_{l,\chi,\tau=1}^{n}\sum_{\gamma=1}^{n-1}{\rm tr}[c(dx_{l})\overline{c}(dx_{\gamma})\overline{c}(dx_{n})c(dx_{\tau})]=0;\\
&\sum_{l,\eta, \Phi, b, \Psi, c=1}^{n}\sum_{\nu=1}^{n-1}{\rm tr}[c(dx_{l})c(dx_{\Psi})c(dx_{\eta})\overline{c}(dx_{n})\overline{c}(dx_{\nu})c(dx_{c})]=0,
\end{align}
at this time,
\begin{align}
&i\int_{|\xi'|=1}\int^{+\infty}_{-\infty}{\rm tr}_{{\wedge^*T^*M}} [\widetilde{B}_2(x_0)\times
\sigma_{-4}({\widetilde{D}_{W}}^{-3})(x_0)]d\xi_n\sigma(\xi')dx'\\
&=i\int_{|\xi'|=1}\int^{+\infty}_{-\infty}-\frac{i}{2 \left(\xi _n-i\right)^5 \left(\xi _n+i\right)^3}h'(0)
\Big(\sum_{l, \eta, \Gamma, \Omega=1}^{n}\xi_{\Gamma}\xi_{\Omega} a_{l}^{n}a_{\eta}^{\Gamma}a_{\eta}^{n}a_{l}^{\Omega}{\rm tr}[\texttt{id}]\nonumber\\
&-\sum_{l, \Gamma, \Omega, \Lambda=1}^{n}\xi_{\Gamma}\xi_{\Omega} a_{l}^{n}a_{\Lambda}^{\Gamma}a_{l}^{n}a_{\Lambda}^{\Omega}{\rm tr}[\texttt{id}]+\sum_{l, \eta, \Gamma, \Omega=1}^{n}\xi_{\Gamma}\xi_{\Omega} a_{l}^{n}a_{l}^{\Gamma}a_{\eta}^{n}a_{\eta}^{\Omega}{\rm tr}[\texttt{id}]\Big)d\xi_n\sigma(\xi')dx'\nonumber\\
&+i\int_{|\xi'|=1}\int^{+\infty}_{-\infty}\frac{i}{2 \left(\xi _n-i\right)^5 \left(\xi _n+i\right)^3}h'(0)\Big(-\sum_{\chi=1}^{n}\sum_{\gamma=1}^{n-1}\xi_{\gamma}\xi_{\chi}a_{n}^{n}a_{\gamma}^{\chi}{\rm tr}[\texttt{id}]\nonumber\\
&+\sum_{\chi=1}^{n}\sum_{l=1}^{n-1}\xi_{l}\xi_{\chi}a_{l}^{n}a_{n}^{\chi}{\rm tr}[\texttt{id}]\Big)d\xi_n\sigma(\xi')dx'\nonumber
\end{align}
\begin{align}
&+i\int_{|\xi'|=1}\int^{+\infty}_{-\infty}\frac{5i}{2 \left(\xi _n-i\right)^5 \left(\xi _n+i\right)^3}h'(0)\sum_{l,\rho=1}^{n}\xi_{n}\xi_{\rho}a_{l}^{n}a_{l}^{\rho}{\rm tr}[\texttt{id}]d\xi_n\sigma(\xi')dx'\nonumber\\
&+i\int_{|\xi'|=1}\int^{+\infty}_{-\infty}-\frac{i}{\left(\xi _n-i\right)^5 \left(\xi _n+i\right)^3}\Big(\sum_{l,\alpha,\beta,\lambda=1}^{n}\xi_{\lambda}a_{l}^{n}a_{\alpha}^{\beta}a_{l}^{\lambda}g^{M}(dx_{\beta}, (\nabla^{L}_{e_{\alpha}}J)(\xi^{*})){\rm tr}[\texttt{id}]\nonumber\\
&-\sum_{l,\alpha,\beta,\lambda=1}^{n}\xi_{\lambda}a_{l}^{n}a_{\alpha}^{\beta}a_{\beta}^{\lambda}g^{M}(dx_{l}, (\nabla^{L}_{e_{\alpha}}J)(\xi^{*})){\rm tr}[\texttt{id}]\nonumber\\
&+\sum_{l,\alpha,\lambda,\omega=1}^{n}\xi_{\lambda}a_{l}^{n}a_{\alpha}^{l}a_{\omega}^{\lambda}g^{M}(dx_{\omega}, (\nabla^{L}_{e_{\alpha}}J)(\xi^{*})){\rm tr}[\texttt{id}]\Big)d\xi_n\sigma(\xi')dx'\nonumber\\
&+i\int_{|\xi'|=1}\int^{+\infty}_{-\infty}\frac{i}{8 \left(\xi _n-i\right)^5 \left(\xi _n+i\right)^3}h'(0)\Big(-\sum_{l, \Phi, b=1}^{n}\sum_{\nu=1}^{n-1}\xi_{\Phi}\xi_{b}a_{l}^{n}a_{\nu}^{n}a_{\nu}^{\Phi}a_{l}^{b}{\rm tr}[\texttt{id}]\nonumber\\
&+\sum_{l, \Phi, b=1}^{n}\sum_{\nu=1}^{n-1}\xi_{\Phi}\xi_{b}a_{l}^{n}a_{\nu}^{\nu}a_{n}^{\Phi}a_{l}^{b}{\rm tr}[\texttt{id}]+\sum_{\Phi, b, \Psi=1}^{n}\sum_{\nu=1}^{n-1}\xi_{\Phi}\xi_{b}a_{\nu}^{n}a_{\nu}^{n}a_{\Psi}^{\Phi}a_{\Psi}^{b}{\rm tr}[\texttt{id}]\nonumber\\
&-\sum_{\mu, \Phi, b=1}^{n}\sum_{\nu=1}^{n-1}\xi_{\Phi}\xi_{b}a_{\nu}^{n}a_{\nu}^{\mu}a_{n}^{\Phi}a_{\mu}^{b}{\rm tr}[\texttt{id}]+\sum_{\mu, \Phi, b=1}^{n}\sum_{\nu=1}^{n-1}\xi_{\Phi}\xi_{b}a_{\nu}^{n}a_{\nu}^{\mu}a_{\mu}^{\Phi}a_{n}^{b}{\rm tr}[\texttt{id}]\nonumber\\
&-\sum_{\Phi, b, \Psi=1}^{n}\sum_{\nu=1}^{n-1}\xi_{\Phi}\xi_{b}a_{n}^{n}a_{\nu}^{\nu}a_{\Psi}^{\Phi}a_{\Psi}^{b}{\rm tr}[\texttt{id}]+\sum_{\mu, \Phi, b=1}^{n}\sum_{\nu=1}^{n-1}\xi_{\Phi}\xi_{b}a_{n}^{n}a_{\nu}^{\mu}a_{\nu}^{\Phi}a_{\mu}^{b}{\rm tr}[\texttt{id}]\nonumber\\
&-\sum_{\mu, \Phi, b=1}^{n}\sum_{\nu=1}^{n-1}\xi_{\Phi}\xi_{b}a_{n}^{n}a_{\nu}^{\mu}a_{\mu}^{\Phi}a_{\nu}^{b}{\rm tr}[\texttt{id}]-\sum_{l, \Phi, b=1}^{n}\sum_{\nu=1}^{n-1}\xi_{\Phi}\xi_{b}a_{l}^{n}a_{\nu}^{l}a_{\nu}^{\Phi}a_{n}^{b}{\rm tr}[\texttt{id}]\nonumber\\
&+\sum_{l, \Phi, b=1}^{n}\sum_{\nu=1}^{n-1}\xi_{\Phi}\xi_{b}a_{l}^{n}a_{\nu}^{l}a_{n}^{\Phi}a_{\nu}^{b}{\rm tr}[\texttt{id}]+\sum_{l, \Phi, b=1}^{n}\sum_{\nu=1}^{n-1}\xi_{\Phi}\xi_{b}a_{l}^{n}a_{\nu}^{\nu}a_{l}^{\Phi}a_{n}^{b}{\rm tr}[\texttt{id}]\nonumber\\
&-\sum_{l, \Phi, b=1}^{n}\sum_{\nu=1}^{n-1}\xi_{\Phi}\xi_{b}a_{l}^{n}a_{\nu}^{n}a_{l}^{\Phi}a_{\nu}^{b}{\rm tr}[\texttt{id}]\Big)d\xi_n\sigma(\xi')dx'\nonumber\\
&+i\int_{|\xi'|=1}\int^{+\infty}_{-\infty}-\frac{i}{2 \left(\xi _n-i\right)^5 \left(\xi _n+i\right)^3}\Big(\sum_{l,j,p,\delta,q=1}^{n}\xi_{p}\xi_{\delta}a_{l}^{n}a_{q}^{\delta}a_{q}^{j}\partial_{x_{j}}(a_{l}^{p}){\rm tr}[\texttt{id}]\nonumber\\
&-\sum_{l,j,p,h,\delta=1}^{n}\xi_{p}\xi_{\delta}a_{l}^{n}a_{h}^{\delta}a_{l}^{j}\partial_{x_{j}}(a_{h}^{p}){\rm tr}[\texttt{id}]+\sum_{l,j,p,\delta,q=1}^{n}\xi_{p}\xi_{\delta}a_{l}^{n}a_{l}^{\delta}a_{q}^{j}\partial_{x_{j}}(a_{q}^{p}){\rm tr}[\texttt{id}]\Big)d\xi_n\sigma(\xi')dx'\nonumber\\
&+i\int_{|\xi'|=1}\int^{+\infty}_{-\infty}-\frac{i}{\left(\xi _n-i\right)^5 \left(\xi _n+i\right)^3}\sum_{l,j,p=1}^{n}\xi_{j}\xi_{p}a_{l}^{n}\partial_{x_{j}}(a_{l}^{p}){\rm tr}[\texttt{id}]d\xi_n\sigma(\xi')dx'\nonumber
\end{align}
\begin{align}
&+i\int_{|\xi'|=1}\int^{+\infty}_{-\infty}-\frac{i}{4 \left(\xi _n-i\right)^5 \left(\xi _n+i\right)^3}h'(0)\Big(\sum_{p,\kappa,o=1}^{n}\sum_{l=1}^{n-1}\xi_{p}\xi_{\kappa}a_{l}^{n}a_{l}^{p}a_{o}^{\kappa}a_{o}^{n}{\rm tr}[\texttt{id}]\nonumber\\
&-\sum_{l,p,\kappa=1}^{n}\sum_{h=1}^{n-1}\xi_{p}\xi_{\kappa}a_{l}^{n}a_{h}^{p}a_{h}^{\kappa}a_{l}^{n}{\rm tr}[\texttt{id}]+\sum_{l,p,\kappa=1}^{n}\sum_{h=1}^{n-1}\xi_{p}\xi_{\kappa}a_{l}^{n}a_{h}^{p}a_{l}^{\kappa}a_{h}^{n}{\rm tr}[\texttt{id}]\Big)d\xi_n\sigma(\xi')dx'\nonumber\\
&+i\int_{|\xi'|=1}\int^{+\infty}_{-\infty}-\frac{i}{2\left(\xi _n-i\right)^5 \left(\xi _n+i\right)^3}h'(0)\sum_{p=1}^{n}\sum_{l=1}^{n-1}\xi_{n}\xi_{p}a_{l}^{n}a_{l}^{p}{\rm tr}[\texttt{id}]d\xi_n\sigma(\xi')dx'\nonumber\\
&+i\int_{|\xi'|=1}\int^{+\infty}_{-\infty}\frac{i}{\left(\xi _n-i\right)^6 \left(\xi _n+i\right)^4}h'(0)\Big(\sum_{l, d, f, e=1}^{n}\xi_{d}\xi_{f}a_{l}^{n}a_{e}^{d}a_{e}^{n}a_{l}^{f}{\rm tr}[\texttt{id}]\nonumber\\
&-\sum_{l, d, f, e=1}^{n}\xi_{d}\xi_{f}a_{l}^{n}a_{e}^{d}a_{l}^{n}a_{e}^{f}{\rm tr}[\texttt{id}]+\sum_{l, d, f, m=1}^{n}\xi_{d}\xi_{f}a_{l}^{n}a_{l}^{d}a_{m}^{n}a_{m}^{f}{\rm tr}[\texttt{id}]\Big)d\xi_n\sigma(\xi')dx'\nonumber\\
&+i\int_{|\xi'|=1}\int^{+\infty}_{-\infty}\frac{2i}{\left(\xi _n-i\right)^6 \left(\xi _n+i\right)^4}h'(0)\sum_{l,\psi=1}^{n}\xi_{n}\xi_{\psi}a_{l}^{n}a_{l}^{\psi}{\rm tr}[\texttt{id}]d\xi_n\sigma(\xi')dx'.\nonumber
\end{align}

From $\int_{|\xi'|=1}\xi_i\xi_j=\frac{8\pi^{2}}{15}\delta_i^j,$ it follows that
\begin{align}
\overline{\Psi}_4
&=\Big(\sum_{l=1}^{n}g^{M}(J(dx_{l}), (\nabla^{L}_{e_{l}}J)e_{n}){\rm tr}[\texttt{id}]
-\sum_{l=1}^{n}g^{M}(J(dx_{n}), (\nabla^{L}_{e_{l}}J)e_{l}){\rm tr}[\texttt{id}]\nonumber\\
&+\sum_{l=1}^{n}g^{M}(J(dx_{l}), (\nabla^{L}_{e_{n}}J)e_{l}){\rm tr}[\texttt{id}]\Big)
\Omega_4(-\frac{\pi^{3}}{8})dx'\nonumber\\
&+\sum_{l,j=1}^{n}a_{l}^{j}\partial_{x_{j}}(a_{l}^{n}){\rm tr}[\texttt{id}]\Omega_4(\frac{\pi^{3}}{8})dx'\nonumber\\
&+\sum_{l,\beta=1}^{n}\sum_{i=1}^{n-1}(a_{l}^{n})^{2}(a_{\beta}^{i})^{2}{\rm tr}[\texttt{id}]\Omega_4h'(0)(\frac{\pi^{3}}{32})dx'\nonumber\\
&+\sum_{l=1}^{n}(a_{l}^{n})^{2}{\rm tr}[\texttt{id}]\Omega_4h'(0)(\frac{3\pi^{3}}{5})dx'\nonumber\\
&+\sum_{i=1}^{n-1}(a_{i}^{n})^{2}{\rm tr}[\texttt{id}]\Omega_4h'(0)(-\frac{5\pi^{3}}{64})dx'\nonumber\\
&+\sum_{i=1}^{n-1}a_{n}^{n}a_{i}^{i}{\rm tr}[\texttt{id}]\Omega_4h'(0)(-\frac{3\pi^{3}}{64})dx'.\nonumber
\end{align}

{\bf  case (c)}~$r=-2,l=-3,|\alpha|=j=k=0$\\

It is easily seen that
\begin{equation}
\overline{\Psi}_5=-i\int_{|\xi'|=1}\int^{+\infty}_{-\infty}{\rm tr}_{{\wedge^*T^*M}}[\pi^{+}_{\xi_{n}}\sigma_{-2}({\widetilde{D}_{W}}^{-1})
      \times\partial_{\xi_{n}}\sigma_{-3}({\widetilde{D}_{W}}^{-3})](x_0)d\xi_n\sigma(\xi')dx'.
\end{equation}

Likewise, we have
\begin{align}
&\pi^{+}_{\xi_{n}}\sigma_{-2}({\widetilde{D}_{W}}^{-1})(x_0)|_{|\xi'|=1}=\pi^{+}_{\xi_{n}}\Big(\frac{c[J(\xi)]\widetilde{b}_0^2(x_0)c[J(\xi)]}{(1+\xi_n^2)^2}\Big)-h'(0)\pi^{+}_{\xi_{n}}\Big(\frac{c[J(\xi)]}{(1+\xi_n^2)^3}c[J(dx_n)]c[J(\xi)]\Big)\\
&+\pi^{+}_{\xi_{n}}\Big(\frac{c[J(\xi)]}{(1+\xi_n^2)^2}\Big[\sum_{j,p,h=1}^{n}\xi_p\partial_{x_j}(a_{h}^{p})c[J(dx_j)]c(dx_h)+\sum_{p=1}^{n}\sum_{h=1}^{n-1}\xi_pa_{h}^{p}c[J(dx_n)]\partial_{x_n}(c(dx_h))\Big]\Big)\nonumber\\
&+\pi^{+}_{\xi_{n}}\Big(\frac{c[J(\xi)]\widetilde{b}_0^1(x_0)c[J(\xi)]}{(1+\xi_n^2)^2}\Big)+\pi^{+}_{\xi_{n}}\Big(\frac{c[J(\xi)]\overline{c}(V)c[J(\xi)]}{(1+\xi_n^2)^2}\Big).\nonumber
\end{align}
It is clear that
\begin{align}
&\pi^{+}_{\xi_{n}}\Big(\frac{c[J(\xi)]\widetilde{b}_0^1(x_0)c[J(\xi)]}{(1+\xi_n^2)^2}\Big)\\
&=-\frac{i\xi_n}{16(\xi_n-i)^2}h'(0)\sum_{l,\gamma,\eta=1}^{n}\sum_{\nu=1}^{n-1}a_{l}^{n}a_{\gamma}^{n}a_{\nu}^{\eta}c(dx_l)c(dx_\eta)\overline{c}(dx_n)\overline{c}(dx_\nu)c(dx_{\gamma})\nonumber\\
&-\frac{i}{16(\xi_n-i)^2}h'(0)\sum_{l,\gamma,\eta=1}^{n}\sum_{q,\nu=1}^{n-1}\xi_{q}a_{l}^{q}a_{\gamma}^{n}a_{\nu}^{\eta}c(dx_l)c(dx_\eta)\overline{c}(dx_n)\overline{c}(dx_\nu)c(dx_{\gamma})\nonumber\\
&-\frac{i}{16(\xi_n-i)^2}h'(0)\sum_{l,\gamma,\eta=1}^{n}\sum_{\alpha,\nu=1}^{n-1}\xi_{\alpha}a_{l}^{n}a_{\gamma}^{\alpha}a_{\nu}^{\eta}c(dx_l)c(dx_\eta)\overline{c}(dx_n)\overline{c}(dx_\nu)c(dx_{\gamma})\nonumber\\
&-\frac{i\xi_n+2}{16(\xi_n-i)^2}h'(0)\sum_{l,\gamma,\eta=1}^{n}\sum_{q,\alpha,\nu=1}^{n-1}\xi_{q}\xi_{\alpha}a_{l}^{q}a_{\gamma}^{\alpha}a_{\nu}^{\eta}c(dx_l)c(dx_\eta)\overline{c}(dx_n)\overline{c}(dx_\nu)c(dx_{\gamma}),\nonumber
\end{align}
\begin{align}
&\pi^{+}_{\xi_{n}}\Big(\frac{c[J(\xi)]\overline{c}(V)c[J(\xi)]}{(1+\xi_n^2)^2}\Big)\\
&=-\frac{i\xi_n}{4(\xi_n-i)^2}\sum_{l,\gamma=1}^{n}a_{l}^{n}a_{\gamma}^{n}c(dx_l)\overline{c}(V)c(dx_{\gamma})\nonumber\\
&-\frac{i}{4(\xi_n-i)^2}\sum_{l,\gamma=1}^{n}\sum_{q=1}^{n-1}\xi_{q}a_{l}^{q}a_{\gamma}^{n}c(dx_l)\overline{c}(V)c(dx_{\gamma})\nonumber
\end{align}
\begin{align}
&-\frac{i}{4(\xi_n-i)^2}\sum_{l,\gamma=1}^{n}\sum_{\alpha=1}^{n-1}\xi_{\alpha}a_{l}^{n}a_{\gamma}^{\alpha}c(dx_l)\overline{c}(V)c(dx_{\gamma})\nonumber\\
&-\frac{i\xi_n+2}{4(\xi_n-i)^2}\sum_{l,\gamma=1}^{n}\sum_{q,\alpha=1}^{n-1}\xi_{q}\xi_{\alpha}a_{l}^{q}a_{\gamma}^{\alpha}c(dx_l)\overline{c}(V)c(dx_{\gamma}).\nonumber
\end{align}

A simple calculation shows that
\begin{align}
\partial_{\xi_n}\left(\frac{ic[J(\xi)]}{|\xi|^{4}}\right)(x_0)|_{|\xi'|=1}
&=i\sum^{n}_{\beta=1}\sum^{n-1}_{i=1}\xi_{i}a_{\beta}^ic(dx_{\beta})\partial_{\xi_n}\left(\frac{1}{(1+\xi_{n}^{2})^{2}}\right)+i\sum^{n}_{\beta=1}a_{\beta}^nc(dx_{\beta})\partial_{\xi_n}\left(\frac{\xi_{n}}{(1+\xi_{n}^{2})^{2}}\right)\\
&=-\frac{4 i \xi_n}{\left(1+\xi_n^2\right)^3}\sum^{n}_{\beta=1}\sum^{n-1}_{i=1}\xi_{i}a_{\beta}^ic(dx_{\beta})+\frac{i(1-3 \xi _n^2)}{\left(1+\xi_n^2\right)^3}\sum^{n}_{\beta=1}a_{\beta}^nc(dx_{\beta}).\nonumber
\end{align}

Accordingly, we have
\begin{align}
&{\rm tr} [\pi^{+}_{\xi_{n}}\Big(\frac{c[J(\xi)]\widetilde{b}_0^1(x_0)c[J(\xi)]}{(1+\xi_n^2)^2}\Big) \times \partial_{\xi_n}\sigma_{-3}({\widetilde{D}_{W}}^{-3})(x_0)]|_{|\xi'|=1}\\
&=-\frac{\xi _n^2}{4 \left(\xi _n-i\right)^5 \left(\xi _n+i\right)^3}h'(0)\sum_{l,\gamma,\eta,\beta=1}^{n}\sum_{\nu,i=1}^{n-1}{\rm tr}[\xi_{i}a_{l}^{n}a_{\gamma}^{n}a_{\nu}^{\eta}a_{\beta}^{i}c(dx_{l})c(dx_{\eta})\overline{c}(dx_{n})\overline{c}(dx_{\nu})c(dx_{\gamma})c(dx_{\beta})]\nonumber\\
&-\frac{\xi _n \left(3 \xi _n^2-1\right)}{16 \left(\xi _n-i\right)^5 \left(\xi _n+i\right)^3}h'(0)\sum_{l,\gamma,\eta,\beta=1}^{n}\sum_{\nu=1}^{n-1}{\rm tr}[a_{l}^{n}a_{\gamma}^{n}a_{\nu}^{\eta}a_{\beta}^{n}c(dx_{l})c(dx_{\eta})\overline{c}(dx_{n})\overline{c}(dx_{\nu})c(dx_{\gamma})c(dx_{\beta})]\nonumber\\
&-\frac{\xi _n}{4 \left(\xi _n-i\right)^5 \left(\xi _n+i\right)^3}h'(0)\sum_{l,\gamma,\eta,\beta=1}^{n}\sum_{q,\nu,i=1}^{n-1}{\rm tr}[\xi_{q}\xi_{i}a_{l}^{q}a_{\gamma}^{n}a_{\nu}^{\eta}a_{\beta}^{i}c(dx_{l})c(dx_{\eta})\overline{c}(dx_{n})\overline{c}(dx_{\nu})c(dx_{\gamma})c(dx_{\beta})]\nonumber\\
&-\frac{3 \xi _n^2-1}{16 \left(\xi _n-i\right)^5 \left(\xi _n+i\right)^3}h'(0)\sum_{l,\gamma,\eta,\beta=1}^{n}\sum_{q,\nu=1}^{n-1}{\rm tr}[\xi_{q}a_{l}^{q}a_{\gamma}^{n}a_{\nu}^{\eta}a_{\beta}^{n}c(dx_{l})c(dx_{\eta})\overline{c}(dx_{n})\overline{c}(dx_{\nu})c(dx_{\gamma})c(dx_{\beta})]\nonumber\\
&-\frac{\xi _n}{4 \left(\xi _n-i\right)^5 \left(\xi _n+i\right)^3}h'(0)\sum_{l,\gamma,\eta,\beta=1}^{n}\sum_{\alpha,\nu,i=1}^{n-1}{\rm tr}[\xi_{\alpha}\xi_{i}a_{l}^{n}a_{\gamma}^{\alpha}a_{\nu}^{\eta}a_{\beta}^{i}c(dx_{l})c(dx_{\eta})\overline{c}(dx_{n})\overline{c}(dx_{\nu})c(dx_{\gamma})c(dx_{\beta})]\nonumber\\
&-\frac{3 \xi _n^2-1}{16 \left(\xi _n-i\right)^5 \left(\xi _n+i\right)^3}h'(0)\sum_{l,\gamma,\eta,\beta=1}^{n}\sum_{\alpha,\nu=1}^{n-1}{\rm tr}[\xi_{\alpha}a_{l}^{n}a_{\gamma}^{\alpha}a_{\nu}^{\eta}a_{\beta}^{n}c(dx_{l})c(dx_{\eta})\overline{c}(dx_{n})\overline{c}(dx_{\nu})c(dx_{\gamma})c(dx_{\beta})]\nonumber\\
&-\frac{\xi _n \left(\xi _n-2 i\right)}{4 \left(\xi _n-i\right)^5 \left(\xi _n+i\right)^3}h'(0)\sum_{l,\gamma,\eta,\beta=1}^{n}\sum_{q,\alpha,\nu,i=1}^{n-1}{\rm tr}[\xi_{q}\xi_{\alpha}\xi_{i}a_{l}^{q}a_{\gamma}^{\alpha}a_{\nu}^{\eta}a_{\beta}^{i}c(dx_{l})c(dx_{\eta})\overline{c}(dx_{n})\overline{c}(dx_{\nu})c(dx_{\gamma})c(dx_{\beta})]\nonumber
\end{align}
\begin{align}
&-\frac{\left(\xi _n-2 i\right) \left(3 \xi _n^2-1\right)}{16 \left(\xi _n-i\right)^5 \left(\xi _n+i\right)^3}h'(0)\sum_{l,\gamma,\eta,\beta=1}^{n}\sum_{q,\alpha,\nu=1}^{n-1}{\rm tr}[\xi_{q}\xi_{\alpha}a_{l}^{q}a_{\gamma}^{\alpha}a_{\nu}^{\eta}a_{\beta}^{n}c(dx_{l})c(dx_{\eta})\overline{c}(dx_{n})\overline{c}(dx_{\nu})c(dx_{\gamma})c(dx_{\beta})].\nonumber
\end{align}
By $\sum_{l,\gamma,\eta,\beta=1}^{n}\sum_{\nu=1}^{n-1}{\rm tr}[c(dx_{l})c(dx_{\eta})\overline{c}(dx_{n})\overline{c}(dx_{\nu})c(dx_{\gamma})c(dx_{\beta})]=0,$ we see that
\begin{align}
&-i\int_{|\xi'|=1}\int^{+\infty}_{-\infty}{\rm tr}_{{\wedge^*T^*M}}[\pi^{+}_{\xi_{n}}\Big(\frac{c[J(\xi)]\widetilde{b}_0^1(x_0)c[J(\xi)]}{(1+\xi_n^2)^2}\Big) \times \partial_{\xi_n}\sigma_{-3}({\widetilde{D}_{W}}^{-3})(x_0)]d\xi_n\sigma(\xi')dx'=0.\nonumber
\end{align}

A trivial verification shows that
\begin{align}
&{\rm tr} [\pi^{+}_{\xi_{n}}\Big(\frac{c[J(\xi)]\overline{c}(V)c[J(\xi)]}{(1+\xi_n^2)^2}\Big) \times \partial_{\xi_n}\sigma_{-3}({\widetilde{D}_{W}}^{-3})(x_0)]|_{|\xi'|=1}\\
&=-\frac{\xi _n^2}{\left(\xi _n-i\right)^5 \left(\xi _n+i\right)^3}h'(0)\sum_{l,\gamma,\beta=1}^{n}\sum_{i=1}^{n-1}{\rm tr}[\xi_{i}a_{l}^{n}a_{\gamma}^{n}a_{\beta}^{i}c(dx_{l})\overline{c}(V)c(dx_{\gamma})c(dx_{\beta})]\nonumber\\
&-\frac{\xi _n \left(3 \xi _n^2-1\right)}{4 \left(\xi _n-i\right)^5 \left(\xi _n+i\right)^3}h'(0)\sum_{l,\gamma,\beta=1}^{n}{\rm tr}[a_{l}^{n}a_{\gamma}^{n}a_{\beta}^{n}c(dx_{l})\overline{c}(V)c(dx_{\gamma})c(dx_{\beta})]\nonumber\\
&-\frac{\xi _n}{\left(\xi _n-i\right)^5 \left(\xi _n+i\right)^3}h'(0)\sum_{l,\gamma,\beta=1}^{n}\sum_{q,i=1}^{n-1}{\rm tr}[\xi_{q}\xi_{i}a_{l}^{q}a_{\gamma}^{n}a_{\beta}^{i}c(dx_{l})\overline{c}(V)c(dx_{\gamma})c(dx_{\beta})]\nonumber\\
&-\frac{3 \xi _n^2-1}{4 \left(\xi _n-i\right)^5 \left(\xi _n+i\right)^3}h'(0)\sum_{l,\gamma,\beta=1}^{n}\sum_{q=1}^{n-1}{\rm tr}[\xi_{q}a_{l}^{q}a_{\gamma}^{n}a_{\beta}^{n}c(dx_{l})\overline{c}(V)c(dx_{\gamma})c(dx_{\beta})]\nonumber\\
&-\frac{\xi _n}{\left(\xi _n-i\right)^5 \left(\xi _n+i\right)^3}h'(0)\sum_{l,\gamma,\beta=1}^{n}\sum_{\alpha,i=1}^{n-1}{\rm tr}[\xi_{\alpha}\xi_{i}a_{l}^{n}a_{\gamma}^{\alpha}a_{\beta}^{i}c(dx_{l})\overline{c}(V)c(dx_{\gamma})c(dx_{\beta})]\nonumber\\
&-\frac{3 \xi _n^2-1}{4 \left(\xi _n-i\right)^5 \left(\xi _n+i\right)^3}h'(0)\sum_{l,\gamma,\beta=1}^{n}\sum_{\alpha=1}^{n-1}{\rm tr}[\xi_{\alpha}a_{l}^{n}a_{\gamma}^{\alpha}a_{\beta}^{n}c(dx_{l})\overline{c}(V)c(dx_{\gamma})c(dx_{\beta})]\nonumber\\
&-\frac{\xi _n \left(\xi _n-2 i\right)}{\left(\xi _n-i\right)^5 \left(\xi _n+i\right)^3}h'(0)\sum_{l,\gamma,\beta=1}^{n}\sum_{q,\alpha,i=1}^{n-1}{\rm tr}[\xi_{q}\xi_{\alpha}\xi_{i}a_{l}^{q}a_{\gamma}^{\alpha}a_{\beta}^{i}c(dx_{l})\overline{c}(V)c(dx_{\gamma})c(dx_{\beta})]\nonumber\\
&-\frac{\left(\xi _n-2 i\right) \left(3 \xi _n^2-1\right)}{4 \left(\xi _n-i\right)^5 \left(\xi _n+i\right)^3}h'(0)\sum_{l,\gamma,\beta=1}^{n}\sum_{q,\alpha=1}^{n-1}{\rm tr}[\xi_{q}\xi_{\alpha}a_{l}^{q}a_{\gamma}^{\alpha}a_{\beta}^{n}c(dx_{l})\overline{c}(V)c(dx_{\gamma})c(dx_{\beta})].\nonumber
\end{align}
Using $\int_{|\xi'|=1}{\{\xi_{i_1}\cdot\cdot\cdot\xi_{i_{2d+1}}}\}\sigma(\xi')=0,$ we conclude that
\begin{align}
&-i\int_{|\xi'|=1}\int^{+\infty}_{-\infty}{\rm tr}_{{\wedge^*T^*M}}[\pi^{+}_{\xi_{n}}\Big(\frac{c[J(\xi)]\overline{c}(V)c[J(\xi)]}{(1+\xi_n^2)^2}\Big) \times \partial_{\xi_n}\sigma_{-3}({\widetilde{D}_{W}}^{-3})(x_0)]d\xi_n\sigma(\xi')dx'
\end{align}
\begin{align}
&=i\int_{|\xi'|=1}\int^{+\infty}_{-\infty}\frac{\xi _n \left(3 \xi _n^2-1\right)h'(0)}{4 \left(\xi _n-i\right)^5 \left(\xi _n+i\right)^3}\sum_{l,\gamma,\beta=1}^{n}{\rm tr}[a_{l}^{n}a_{\gamma}^{n}a_{\beta}^{n}c(dx_{l})\overline{c}(V)c(dx_{\gamma})c(dx_{\beta})]d\xi_n\sigma(\xi')dx'\nonumber\\
&+i\int_{|\xi'|=1}\int^{+\infty}_{-\infty}\frac{\xi _n h'(0)}{\left(\xi _n-i\right)^5 \left(\xi _n+i\right)^3}\sum_{l,\gamma,\beta=1}^{n}\sum_{q,i=1}^{n-1}{\rm tr}[\xi_{q}\xi_{i}a_{l}^{q}a_{\gamma}^{n}a_{\beta}^{i}c(dx_{l})\overline{c}(V)c(dx_{\gamma})c(dx_{\beta})]d\xi_n\sigma(\xi')dx'\nonumber\\
&+i\int_{|\xi'|=1}\int^{+\infty}_{-\infty}\frac{\xi _n h'(0)}{\left(\xi _n-i\right)^5 \left(\xi _n+i\right)^3}\sum_{l,\gamma,\beta=1}^{n}\sum_{\alpha,i=1}^{n-1}{\rm tr}[\xi_{\alpha}\xi_{i}a_{l}^{n}a_{\gamma}^{\alpha}a_{\beta}^{i}c(dx_{l})\overline{c}(V)c(dx_{\gamma})c(dx_{\beta})]d\xi_n\sigma(\xi')dx'\nonumber\\
&+i\int_{|\xi'|=1}\int^{+\infty}_{-\infty}\frac{\left(\xi _n-2 i\right) \left(3 \xi _n^2-1\right) h'(0)}{4 \left(\xi _n-i\right)^5 \left(\xi _n+i\right)^3}\sum_{l,\gamma,\beta=1}^{n}\sum_{q,\alpha=1}^{n-1}{\rm tr}[\xi_{q}\xi_{\alpha}a_{l}^{q}a_{\gamma}^{\alpha}a_{\beta}^{n}c(dx_{l})\overline{c}(V)c(dx_{\gamma})c(dx_{\beta})]d\xi_n\sigma(\xi')dx',\nonumber
\end{align}
of course,
\begin{align}
-i\int_{|\xi'|=1}\int^{+\infty}_{-\infty}{\rm tr}_{{\wedge^*T^*M}}[\pi^{+}_{\xi_{n}}\Big(\frac{c[J(\xi)]\overline{c}(V)c[J(\xi)]}{(1+\xi_n^2)^2}\Big) \times \partial_{\xi_n}\sigma_{-3}({\widetilde{D}_{W}}^{-3})(x_0)]d\xi_n\sigma(\xi')dx'=0.
\end{align}

On account of the above result,
\begin{align}
\overline{\Psi}_5&=-i\int_{|\xi'|=1}\int^{+\infty}_{-\infty}{\rm tr}_{{\wedge^*T^*M}}[\pi^{+}_{\xi_{n}}\Big(\frac{c[J(\xi)]\widetilde{b}_0^1(x_0)c[J(\xi)]}{(1+\xi_n^2)^2}\Big) \times \partial_{\xi_n}\sigma_{-3}({\widetilde{D}_{W}}^{-3})(x_0)]d\xi_n\sigma(\xi')dx'\\
&-i\int_{|\xi'|=1}\int^{+\infty}_{-\infty}{\rm tr}_{{\wedge^*T^*M}}[\pi^{+}_{\xi_{n}}\Big(\frac{c[J(\xi)]\overline{c}(V)c[J(\xi)]}{(1+\xi_n^2)^2}\Big) \times \partial_{\xi_n}\sigma_{-3}({\widetilde{D}_{W}}^{-3})(x_0)]d\xi_n\sigma(\xi')dx'\nonumber\\
&-i\int_{|\xi'|=1}\int^{+\infty}_{-\infty}{\rm tr}_{{\wedge^*T^*M}}[\pi^{+}_{\xi_{n}}\Big(\frac{c[J(\xi)]\widetilde{b}_0^2(x_0)c[J(\xi)]}{(1+\xi_n^2)^2}\Big)-h'(0)\pi^{+}_{\xi_{n}}\Big(\frac{c[J(\xi)]}{(1+\xi_n^2)^3}c[J(dx_n)]c[J(\xi)]\Big)\nonumber\\
&+\pi^{+}_{\xi_{n}}\Big(\frac{c[J(\xi)]}{(1+\xi_n^2)^2}\Big[\sum_{j,p,h=1}^{n}\xi_p\partial_{x_j}(a_{h}^{p})c[J(dx_j)]c(dx_h)+\sum_{p=1}^{n}\sum_{h=1}^{n-1}\xi_pa_{h}^{p}c[J(dx_n)]\partial_{x_n}(c(dx_h))\Big]\Big)\nonumber\\
&\times \partial_{\xi_n}\sigma_{-3}({\widetilde{D}_{W}}^{-3})(x_0)]d\xi_n\sigma(\xi')dx'\nonumber\\
&=\sum_{l,\beta=1}^{n}\sum_{i=1}^{n-1}a_{l}^{i}a_{\beta}^{i}a_{l}^{n}a_{\beta}^{n}{\rm tr}[\texttt{id}]\Omega_4h'(0)(-\frac{7 \pi ^3}{80} )dx'
+\sum_{l=1}^{n}\sum_{\nu,i=1}^{n-1}a_{\nu}^{i}a_{l}^{i}a_{\nu}^{n}a_{l}^{n}{\rm tr}[\texttt{id}]\Omega_4h'(0)(\frac{\pi ^3}{16})dx'\nonumber\\
&+\sum_{l,j,\beta=1}^{n}(a_{\beta}^{n})^2a_{l}^{j}\partial_{x_j}(a_{l}^{n}){\rm tr}[\texttt{id}]\Omega_4(-\frac{\pi }{64})dx'
+\sum_{l,j,\beta=1}^{n}\sum_{i=1}^{n-1}(a_{\beta}^{i})^2a_{l}^{j}\partial_{x_j}(a_{l}^{n}){\rm tr}[\texttt{id}]\Omega_4(-\frac{\pi ^3}{24} )dx'\nonumber\\
&+\sum_{l=1}^{n}\sum_{i=1}^{n-1}(a_{l}^{n})^2a_{i}^{i}a_{n}^{n}{\rm tr}[\texttt{id}]\Omega_4h'(0)(\frac{\pi }{256})dx'
+\sum_{l=1}^{n}\sum_{\nu,i=1}^{n-1}(a_{l}^{i})^2a_{\nu}^{\nu}a_{n}^{n}{\rm tr}[\texttt{id}]\Omega_4h'(0)(\frac{\pi ^3}{32})dx'\nonumber
\end{align}
\begin{align}
&+\sum_{l,j=1}^{n}a_{l}^{j}\partial_{x_j}(a_{l}^{n}){\rm tr}[\texttt{id}]\Omega_4(-\frac{\pi^{3} }{12})dx'
+\sum_{l,\beta=1}^{n}(a_{\beta}^{n})^2(a_{l}^{n})^2{\rm tr}[\texttt{id}]\Omega_4h'(0)(\frac{\pi }{256})dx'\nonumber\\
&+\sum_{l,\beta=1}^{n}\sum_{i=1}^{n-1}(a_{l}^{n})^2(a_{\beta}^{i})^2{\rm tr}[\texttt{id}]\Omega_4h'(0)(\frac{49 \pi ^3}{480})dx'
+\sum_{l=1}^{n}\sum_{i=1}^{n-1}(a_{i}^{n})^2(a_{l}^{n})^2{\rm tr}[\texttt{id}]\Omega_4h'(0)(-\frac{3 \pi }{256})dx'\nonumber\\
&+\sum_{l=1}^{n}\sum_{\nu,i=1}^{n-1}(a_{\nu}^{n})^2(a_{l}^{i})^2{\rm tr}[\texttt{id}]\Omega_4h'(0)(-\frac{5 \pi ^3}{96} )dx'
+\sum_{l=1}^{n}\sum_{\nu,i=1}^{n-1}(a_{l}^{n})^2(a_{\nu}^{i})^2{\rm tr}[\texttt{id}]\Omega_4h'(0)(-\frac{ \pi ^3}{24} )dx'\nonumber\\
&+\sum_{\nu,i=1}^{n-1}a_{i}^{i}a_{\nu}^{\nu}{\rm tr}[\texttt{id}]\Omega_4h'(0)(-\frac{\pi ^3}{48})dx'.\nonumber
\end{align}

In summary,
\begin{align}
\overline{\Psi}
&=\overline{\Psi}_1+\overline{\Psi}_2+\overline{\Psi}_3+\overline{\Psi}_4+\overline{\Psi}_5\\
&=\Big(\sum_{l=1}^{n}g^{M}(J(dx_{l}), (\nabla^{L}_{e_{l}}J)e_{n}){\rm tr}[\texttt{id}]
-\sum_{l=1}^{n}g^{M}(J(dx_{n}), (\nabla^{L}_{e_{l}}J)e_{l}){\rm tr}[\texttt{id}]\nonumber\\
&+\sum_{l=1}^{n}g^{M}(J(dx_{l}), (\nabla^{L}_{e_{n}}J)e_{l}){\rm tr}[\texttt{id}]\Big)
\Omega_4(-\frac{\pi ^3}{8})dx'
+\sum_{l,j=1}^{n}a_{l}^{j}\partial_{x_{j}}(a_{l}^{n}){\rm tr}[\texttt{id}]\Omega_4(\frac{1}{48} \pi  \left(10 \pi ^2-3\right))dx'\nonumber\\
&+\sum_{l,j,\beta=1}^{n}(a_{\beta}^{n})^2a_{l}^{j}\partial_{x_j}(a_{l}^{n}){\rm tr}[\texttt{id}]\Omega_4(-\frac{\pi }{64})dx'
+\sum_{l,j,\beta=1}^{n}\sum_{i=1}^{n-1}(a_{\beta}^{i})^2a_{l}^{j}\partial_{x_j}(a_{l}^{n}){\rm tr}[\texttt{id}]\Omega_4(-\frac{\pi ^3}{24} )dx'\nonumber\\
&+\sum_{l,\beta=1}^{n}\sum_{i=1}^{n-1}a_{l}^{i}a_{\beta}^{i}a_{l}^{n}a_{\beta}^{n}{\rm tr}[\texttt{id}]\Omega_4h'(0)(-\frac{7 \pi ^3}{80} )dx'
+\sum_{l=1}^{n}\sum_{\nu,i=1}^{n-1}a_{\nu}^{i}a_{l}^{i}a_{\nu}^{n}a_{l}^{n}{\rm tr}[\texttt{id}]\Omega_4h'(0)(\frac{\pi ^3}{16})dx'\nonumber\\
&+\sum_{l=1}^{n}\sum_{i=1}^{n-1}(a_{l}^{n})^2a_{i}^{i}a_{n}^{n}{\rm tr}[\texttt{id}]\Omega_4h'(0)(\frac{\pi }{256})dx'
+\sum_{l=1}^{n}\sum_{\nu,i=1}^{n-1}(a_{l}^{i})^2a_{\nu}^{\nu}a_{n}^{n}{\rm tr}[\texttt{id}]\Omega_4h'(0)(\frac{\pi ^3}{32})dx'\nonumber\\
&+\sum_{l,\beta=1}^{n}(a_{\beta}^{n})^2(a_{l}^{n})^2{\rm tr}[\texttt{id}]\Omega_4h'(0)(\frac{\pi }{256})dx'
+\sum_{l,\beta=1}^{n}\sum_{i=1}^{n-1}(a_{l}^{n})^2(a_{\beta}^{i})^2{\rm tr}[\texttt{id}]\Omega_4h'(0)(\frac{2 \pi ^3}{15})dx'\nonumber\\
&+\sum_{l=1}^{n}\sum_{i=1}^{n-1}(a_{i}^{n})^2(a_{l}^{n})^2{\rm tr}[\texttt{id}]\Omega_4h'(0)(-\frac{3 \pi}{256})dx'
+\sum_{l=1}^{n}\sum_{\nu,i=1}^{n-1}(a_{\nu}^{n})^2(a_{l}^{i})^2{\rm tr}[\texttt{id}]\Omega_4h'(0)(-\frac{5 \pi ^3}{96})dx'\nonumber\\
&+\sum_{l=1}^{n}\sum_{\nu,i=1}^{n-1}(a_{l}^{n})^2(a_{\nu}^{i})^2{\rm tr}[\texttt{id}]\Omega_4h'(0)(-\frac{\pi ^3}{24} )dx'
+\sum_{i=1}^{n-1}a_{n}^{n}a_{i}^{i}{\rm tr}[\texttt{id}]\Omega_4h'(0)(-\frac{3 \pi^3}{64})dx'\nonumber
\end{align}
\begin{align}
&+\sum_{\nu,i=1}^{n-1}a_{i}^{i}a_{\nu}^{\nu}{\rm tr}[\texttt{id}]\Omega_4h'(0)(-\frac{\pi ^3}{48})dx'
+\sum_{l=1}^{n}(a_{l}^{n})^{2}{\rm tr}[\texttt{id}]\Omega_4h'(0)(\frac{1}{640} \pi  \left(384 \pi ^2+15\right))dx'\nonumber\\
&+\sum_{i=1}^{n-1}(a_{i}^{n})^{2}{\rm tr}[\texttt{id}]\Omega_4h'(0)(-\frac{5 \pi^3}{64})dx'
+\sum_{l=1}^{n}\sum_{i=1}^{n-1}(a_{l}^{i})^2{\rm tr}[\texttt{id}]\Omega_4h'(0)(\frac{7 \pi ^3}{240})dx'.\nonumber
\end{align}

Combine (4.3) with (4.32), we obtain immediately the following theorem:
\begin{thm}
Let $M$ be a $6$-dimensional almost product Riemannian spin manifold with the boundary $\partial M$ and the metric $g^M$ as above, $\widetilde{D}_{W}$ be $J$-Witten deformation on $M$, then
\begin{align}
&\widetilde{{\rm Wres}}[\pi^+{\widetilde{D}_{W}}^{-1}\circ\pi^+{\widetilde{D}_{W}}^{-3}]\\
&=32\pi^{3}\int_{M}\Big(-2\sum_{\nu,j=1}^{6}g^{M}(\nabla_{e_{j}}^{L}(J)e_{\nu}, (\nabla^{L}_{e_{\nu}}J)e_{j})-2\sum_{\nu,j=1}^{6}g^{M}(J(e_{\nu}), (\nabla^{L}_{e_{j}}(\nabla^{L}_{e_{\nu}}(J)))e_{j}-(\nabla^{L}_{\nabla^{L}_{e_{j}}e_{\nu}}(J))e_{j})\nonumber\\
&-\sum_{\alpha,\nu,j=1}^{6}g^{M}(J(e_{\alpha}), (\nabla^{L}_{e_{\nu}}J)e_{j})g^{M}((\nabla^{L}_{e_{\alpha}}J)e_{j}, J(e_{\nu}))-\sum_{\alpha,\nu,j=1}^{6}g^{M}(J(e_{\alpha}), (\nabla^{L}_{e_{\alpha}}J)e_{j})g^{M}(J(e_{\nu}), (\nabla^{L}_{e_{\nu}}J)e_{j})\nonumber\\
&+\sum_{\nu,j=1}^{6}g^{M}((\nabla^{L}_{e_{\nu}}J)e_{j}, (\nabla^{L}_{e_{\nu}}J)e_{j}))-\frac{5}{3}s-4|V|^2\Big)d{\rm Vol_{M} }\nonumber\\
&+\int_{\partial M}\Big[
\frac{4\pi}{3}\left(10 \pi ^2-3\right)
\Big(\sum_{l=1}^{6}\langle\nabla_{J(e_{l})}^{L}(Je_{6}), e_{l}\rangle-\sum_{l=1}^{6}g^{M}\left(J(\frac{\partial}{\partial{x_{6}}}), \frac{\partial}{\partial{x_{6}}}\right)\langle\nabla_{J(e_{l})}^{L}(\frac{\partial}{\partial{x_{6}}}), e_{l}\rangle\Big)\nonumber\\
&-\pi\sum_{\beta=1}^{6}\langle J(e_{\beta}), e_{6}\rangle^{2}\Big(\sum_{l=1}^{6}\langle\nabla_{J(e_{l})}^{L}(Je_{6}), e_{l}\rangle-\sum_{l=1}^{6}g^{M}\left(J(\frac{\partial}{\partial{x_{6}}}), \frac{\partial}{\partial{x_{6}}}\right)\langle\nabla_{J(e_{l})}^{L}(\frac{\partial}{\partial{x_{6}}}), e_{l}\rangle\Big)\nonumber\\
&-\frac{8\pi ^3}{3}\sum_{\beta=1}^{6}\sum_{i=1}^{5}\langle J(e_{\beta}), e_{i}\rangle^{2}\Big(\sum_{l=1}^{6}\langle\nabla_{J(e_{l})}^{L}(Je_{6}), e_{l}\rangle-\sum_{l=1}^{6}g^{M}\left(J(\frac{\partial}{\partial{x_{6}}}), \frac{\partial}{\partial{x_{6}}}\right) \langle\nabla_{J(e_{l})}^{L}(\frac{\partial}{\partial{x_{6}}}), e_{l}\rangle\Big)\nonumber\\
&-8\pi^3\Big(\sum_{l=1}^{6}g^{M}(J(e_{l}), (\nabla^{L}_{e_{l}}J)e_{6})
-\sum_{l=1}^{6}g^{M}(J(\frac{\partial}{\partial{x_{6}}}), (\nabla^{L}_{e_{l}}J)e_{l})+\sum_{l=1}^{6}g^{M}(J(e_{l}), (\nabla^{L}_{e_{6}}J)e_{l})\Big)\nonumber\\
&+\frac{\pi }{4}h'(0)\sum_{l,\beta=1}^{6}\langle J(e_{\beta}), e_{6}\rangle^{2} \langle J(e_{l}), e_{6}\rangle^{2}
+\frac{128 \pi ^3}{15}h'(0)\sum_{l,\beta=1}^{6}\sum_{i=1}^{5}\langle J(e_{l}), e_{6}\rangle^{2} \langle J(e_{\beta}), e_{i}\rangle^{2}\nonumber\\
&-\frac{3 \pi}{4}h'(0)\sum_{l=1}^{6}\sum_{i=1}^{5}\langle J(e_{i}), e_{6}\rangle^{2} \langle J(e_{l}), e_{6}\rangle^{2}
-\frac{10 \pi ^3}{3}h'(0)\sum_{l=1}^{6}\sum_{\nu,i=1}^{5}\langle J(e_{\nu}), e_{6}\rangle^{2} \langle J(e_{l}), e_{i}\rangle^{2}\nonumber
\end{align}
\begin{align}
&-\frac{8\pi ^3}{3}h'(0)\sum_{l=1}^{6}\sum_{\nu,i=1}^{5}\langle J(e_{l}), e_{6}\rangle^{2} \langle J(e_{\nu}), e_{i}\rangle^{2}
-3\pi ^3h'(0)\sum_{i=1}^{5}\langle J(e_{6}), e_{6}\rangle \langle J(e_{i}), e_{i}\rangle\nonumber\\
&-\frac{4\pi ^3}{3}h'(0)\sum_{\nu,i=1}^{5}\langle J(e_{i}), e_{i}\rangle\langle J(e_{\nu}), e_{\nu}\rangle
+\frac{3 \pi}{10}\left(128 \pi ^2+5\right)h'(0)\sum_{l=1}^{6}\langle J(e_{l}), e_{6}\rangle^{2}\nonumber\\
&-5\pi ^3h'(0)\sum_{i=1}^{5}\langle J(e_{i}), e_{6}\rangle^{2}
+\frac{28 \pi ^3}{15}h'(0)\sum_{l=1}^{6}\sum_{i=1}^{5}\langle J(e_{l}), e_{i}\rangle^{2}\nonumber\\
&-\frac{28 \pi ^3}{5}h'(0)\sum_{l,\beta=1}^{6}\sum_{i=1}^{5}\langle J(e_{l}), e_{i}\rangle\ \langle J(e_{\beta}), e_{i}\rangle \langle J(e_{l}), e_{6}\rangle \langle J(e_{\beta}), e_{6}\rangle \nonumber\\
&+4\pi^3h'(0)\sum_{l=1}^{6}\sum_{\nu,i=1}^{5} \langle J(e_{\nu}), e_{i}\rangle \langle J(e_{l}), e_{i}\rangle \langle J(e_{\nu}), e_{6}\rangle \langle J(e_{l}), e_{6}\rangle \nonumber\\
&+\frac{\pi }{4}h'(0)\sum_{l=1}^{6}\sum_{i=1}^{5} \langle J(e_{l}), e_{6}\rangle^{2} \langle J(e_{i}), e_{i}\rangle \langle J(e_{6}), e_{6}\rangle\nonumber\\
&+2\pi^3h'(0)\sum_{l=1}^{6}\sum_{\nu,i=1}^{5}\langle J(e_{l}), e_{i}\rangle^{2} \langle J(e_{\nu}), e_{\nu}\rangle \langle J(e_{6}), e_{6}\rangle
\Big]\Omega_4d{\rm Vol_{\partial M}},\nonumber
\end{align}
where $s$ is the scalar curvature.
\end{thm}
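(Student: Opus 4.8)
The plan is to assemble the two pieces of the formula (3.9)-type decomposition separately: the interior contribution, which has already been recorded as a closed integrand over $M$ in equation (4.3), and the boundary contribution $\int_{\partial M}\overline{\Psi}$, which by equation (4.32) has been reduced to a sum of terms each of the shape $\sum a_{l}^{j}a_{\beta}^{i}\cdots\,\partial_{x_{j}}(a_{l}^{n})\cdots$ with explicit numerical coefficients built out of $\pi$, $\pi^{3}$ and $h'(0)$. Adding (4.3) and $\int_{\partial M}\overline{\Psi}$ gives the raw right-hand side; the remaining work is purely a matter of rewriting the boundary integrand invariantly. First I would invoke Lemma 3.13 (the $\partial M$ analogue of Lemma 3.8 of \cite{LW2}) to convert $\sum_{l,j}a_{l}^{j}\partial_{x_{j}}(a_{l}^{n})$ into $\sum_{l}\langle\nabla^{L}_{J(e_{l})}(Je_{6}),e_{l}\rangle-\sum_{l}g^{M}(J(\partial_{x_{6}}),\partial_{x_{6}})\langle\nabla^{L}_{J(e_{l})}(\partial_{x_{6}}),e_{l}\rangle$, and I would record the analogous identities expressing $\sum_{l,j,\beta}(a_{\beta}^{n})^{2}a_{l}^{j}\partial_{x_{j}}(a_{l}^{n})$ and $\sum_{l,j,\beta}\sum_{i}(a_{\beta}^{i})^{2}a_{l}^{j}\partial_{x_{j}}(a_{l}^{n})$ in the same invariant language, using that $a_{h}^{p}=\langle J(dx_{p}),dx_{h}\rangle$ at $x_{0}$ and that $\sum_{i=1}^{5}(a_{\beta}^{i})^{2}=|J(dx_{\beta})|^{2}-(a_{\beta}^{n})^{2}$.

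Next I would treat the algebraic (non-derivative) boundary terms. Each of these is a contraction of two or four factors $\langle J(e_{\alpha}),e_{\gamma}\rangle$, and at $x_{0}$ in the chosen normal coordinates these are exactly the components of $J$ with respect to the orthonormal frame; so every such sum can be written as $\langle J(e_{\alpha}),e_{\beta}\rangle$-type scalars, e.g. $\sum_{l}(a_{l}^{n})^{2}=\sum_{l}\langle J(e_{l}),e_{6}\rangle^{2}$, $\sum_{i=1}^{5}a_{i}^{i}a_{\nu}^{\nu}$ corresponding to $\sum_{\nu,i=1}^{5}\langle J(e_{i}),e_{i}\rangle\langle J(e_{\nu}),e_{\nu}\rangle$, and the mixed quartic sums to the corresponding four-fold contractions displayed in the statement of Theorem 4.4. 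I would also bring in the $\sigma_{-4}$-type term $\sum_{l}g^{M}(J(dx_{l}),(\nabla^{L}_{e_{l}}J)e_{n})-\sum_{l}g^{M}(J(dx_{n}),(\nabla^{L}_{e_{l}}J)e_{l})+\sum_{l}g^{M}(J(dx_{l}),(\nabla^{L}_{e_{n}}J)e_{l})$ coming from $\overline{\Psi}_{4}$, which is already in invariant form. Finally I would collect coefficients: the derivative term's total coefficient is $\Omega_{4}\cdot\frac{1}{48}\pi(10\pi^{2}-3)\cdot\mathrm{tr}[\texttt{id}]=64\Omega_{4}\cdot\frac{\pi}{48}(10\pi^{2}-3)=\frac{4\pi}{3}(10\pi^{2}-3)\,\Omega_{4}$, and similarly for each remaining term, using $\mathrm{tr}_{\wedge^{*}T^{*}M}[\texttt{id}]=64$ for $n=6$ and the value of $\Omega_{4}$; matching these against the coefficients listed in Theorem 4.4 finishes the boundary part. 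The interior part is obtained directly from (4.3) by the same $\mathrm{tr}[\texttt{id}]$-normalization, noting that the $s$ and $|V|^{2}$ contributions combine into the stated $-\tfrac{5}{3}s-4|V|^{2}$.

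The main obstacle I anticipate is purely bookkeeping rather than conceptual: the boundary integrand in (4.32) contains on the order of twenty terms with delicate rational coefficients, and one must be careful that the several distinct quartic contractions of $J$ (those with the repeated index on the $e_{6}$-slot versus those with it distributed, and those involving $h'(0)$ versus those not) are correctly distinguished when passing from the frame components $a_{h}^{p}$ to invariant inner products; mislabeling which pair of indices is summed would silently shift a coefficient. A secondary subtlety is verifying that all terms that \emph{should} vanish do so — in particular the repeated appeal to $\sum_{l,\gamma,\eta,\beta}\sum_{\nu=1}^{n-1}\mathrm{tr}[c(dx_{l})c(dx_{\eta})\overline{c}(dx_{n})\overline{c}(dx_{\nu})c(dx_{\gamma})c(dx_{\beta})]=0$ and $\mathrm{tr}[c(dx_{l})\overline{c}(V)c(dx_{\gamma})c(dx_{\beta})]=0$, together with the odd-homogeneity vanishing $\int_{|\xi'|=1}\{\xi_{i_{1}}\cdots\xi_{i_{2d+1}}\}\sigma(\xi')=0$ — so that the $\overline{c}(V)$-dependent and $\widetilde{b}_{0}^{1}$-dependent pieces of $\overline{\Psi}_{4}$ and $\overline{\Psi}_{5}$ genuinely drop out, leaving no $V$-dependence on the boundary. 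Once these vanishings are in hand and the coefficient table is checked line by line, combining (4.3) with (4.32) yields Theorem 4.4 verbatim.
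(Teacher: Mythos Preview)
Your proposal is correct and follows essentially the same approach as the paper: the paper's proof is precisely ``Combine (4.3) with (4.32)'' after the case-by-case computation of $\overline{\Psi}_1,\ldots,\overline{\Psi}_5$, and you have accurately described the remaining bookkeeping step of converting the frame components $a_{h}^{p}=\langle J(e_{p}),e_{h}\rangle$ at $x_0$ into the invariant inner products and inserting $\mathrm{tr}[\texttt{id}]=64$. The only minor slip is the label ``Lemma 3.13'' (the relevant identity is Lemma 3.8 in this paper, quoted from \cite{LW2}), and your remark about ``combining'' the $s$ and $|V|^2$ terms is unnecessary since (4.3) already records them in final form.
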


\section{Acknowledgements}

The author was supported in part by  NSFC No.11771070. The author thanks the referee for his (or her) careful reading and helpful comments.

\vskip 1 true cm


\bigskip
\bigskip

\noindent {\footnotesize {\it S. Liu} \\
{School of Mathematics and Statistics, Northeast Normal University, Changchun 130024, China}\\
{Email: liusy719@nenu.edu.cn}

\noindent {\footnotesize {\it Y. Wang} \\
{School of Mathematics and Statistics, Northeast Normal University, Changchun 130024, China}\\
{Email: wangy581@nenu.edu.cn}

\clearpage
\section*{Statements and Declarations}

Funding: This research was funded by National Natural Science Foundation of China: No.11771070.\\

Competing Interests: The authors have no relevant financial or non-financial interests to disclose.\\

Author Contributions: All authors contributed to the study conception and design. Material preparation, data collection and analysis were performed by Siyao Liu and Yong Wang. The first draft of the manuscript was written by Siyao Liu and all authors commented on previous versions of the manuscript. All authors read and approved the final manuscript.\\

Availability of Data and Material: The datasets supporting the conclusions of this article are included within the article and its additional files.\\

\end{document}